\newtheorem{lemma}{Lemma}[section]
\newtheorem{theorem}[lemma]{Theorem}
\newtheorem{corollary}[lemma]{Corollary}
\newtheorem{proposition}[lemma]{Proposition}
\theoremstyle{definition}
\newtheorem{definition}[lemma]{Definition}
\newtheorem{remark}[lemma]{Remark}
\newenvironment{example}
  {\pushQED{\qed}\examplex}
  {\popQED\endexamplex}
  \let\c@lemma\c@figure
\newcommand{\relphantom}[1]{\mathrel{\phantom{#1}}}
\newcommand{\ceil}[1]{\ensuremath{\left\lceil #1 \right\rceil}}
\newcommand{\floor}[1]{\ensuremath{\left\lfloor #1 \right\rfloor}}
\newcommand{\ideal}[1]{\ensuremath{\left\langle #1 \right\rangle}}
\newcommand{\set}[1]{\ensuremath{\left\{ #1 \right\} }}
\newcommand{\colequal}{\ensuremath{\mathrel{\mathop :}=}}
\newcommand{\RL}{\ensuremath{L_{\RR}}}
\newcommand{\kk}{\ensuremath{\Bbbk}}
\newcommand{\NN}{\ensuremath{\mathbb{N}}}
\newcommand{\PP}{\ensuremath{\mathbb{P}}}
\newcommand{\RR}{\ensuremath{\mathbb{R}}}
\newcommand{\ZZ}{\ensuremath{\mathbb{Z}}}
\newcommand{\cHom}{\ensuremath{\!\mathcal{H}\!\!\textit{om}}}
\DeclareMathOperator{\id}{id}
\DeclareMathOperator{\Hom}{Hom}
\DeclareMathOperator{\Ker}{Ker}
\DeclareMathOperator{\Pic}{Pic}
\begin{document}

\vspace*{-3.2em}

\title[Cellular free resolutions]%
  {Cellular free resolutions for normalizations of toric ideals}

\author[C.~Berkesch]{Christine Berkesch}
\address{Christine Berkesch: School of Mathematics, University of
  Minnesota, Minneapolis, Minnesota, 55455, United States of America;
  {\normalfont \texttt{cberkesc@umn.edu}}}

\author[L.~Cranton Heller]{Lauren Cranton Heller}
\address{Lauren Cranton Heller: Department of Mathematics, University of
  Nebraska, Lincoln, Nebraska, 68588, United
  States of America; {\normalfont \texttt{lheller2@unl.edu}}}

\author[G.G.~Smith]{Gregory G.{} Smith}
\address{Gregory G.{} Smith: Department of Mathematics \& Statistics, Queen's
  University, Kingston, Ontario, K7L 3N6, Canada;
  {\normalfont \texttt{ggsmith@mast.queensu.ca}}}

\author[J.~Yang]{Jay Yang}
\address{Jay Yang: Department of Mathematics, Vanderbilt University,
  Nashville, Tennessee, 37240, United States of America;
  {\normalfont \texttt{jay.k.yang@vanderbilt.edu}}}

\subjclass[2020]{13D02; 14M25, 05E14}


\begin{abstract}
  For any toric ideal $I$ in a polynomial ring $S$, we provide a combinatorial
  description of a free resolution of the integral closure of the $S$-module
  $S/I$.  These new complexes arise from an extension of Bayer--Sturmfels'
  theory of cellular free resolutions.  As applications, we unify several
  constructions for a resolution of the diagonal embedding of a toric variety,
  and compare the locally free resolutions for toric subvarieties introduced
  by Hanlon--Hicks--Lazarev and Brown--Erman.
\end{abstract}

\maketitle

\vspace*{-1.5em}
\section{Overview}
\label{s:intro}

\noindent
Explicit free resolutions not only hold a foundational place in commutative
algebra~\cites{MS05, Pee11}, but also play an outsized role in algebraic
combinatorics~\cite{OW07}, algebraic geometry~\cite{Eis05}, and representation
theory~\cite{CG10}. The Koszul complex, which resolves ideals generated by
regular sequences, is the archetypal example. Extending this paradigm, the
Eagon--Northcott, Buchsbaum--Rim, and Gulliksen--Neg\r{a}rd complexes resolve
certain ideals generated by minors of matrices; see~\cite{Wey03}. The theory
of cellular free resolutions~\cites{BPS98, BS98}, generalizing the Taylor and
Eliahou--Kervaire resolutions, offers the most comprehensive approach for
constructing combinatorial free resolutions of monomial ideals by linking them
to cell complexes. While \cites{BS98, PS98} formulate an analog for binomial
ideals, the theory is much less well behaved or fully developed. The
primary goal of this article is to significantly expand the framework for
cellular free resolutions of binomial ideals. In particular, our construction
yields a cellular free resolution for the integral closure of the quotient of
a polynomial ring by a toric ideal.

To elaborate, fix a positive integer $n$ and let $S \colequal \kk[x_1, x_2,
  \dotsc, x_n]$, where $\kk$ is a field.  A saturated lattice $L \subseteq
\ZZ^{n}$ is a free abelian subgroup containing all integral points in its
rational span.  Such saturated lattices are in bijection with toric ideals in
the polynomial ring $S$:
\begin{align*}
  L
  &\subseteq \ZZ^{n}
  &&\longleftrightarrow
  & I_{L}
  &\colequal \ideal{\bm{x}^{\bm{u}} - \bm{x}^{\bm{v}} \in S
    \mathrel{\big|} \text{$\bm{u} - \bm{v} \in L$
    for all $\bm{u}, \bm{v} \in \NN^{n}$}} \, .
\end{align*}
Endow the real vector space $\RL \colequal L \otimes_{\ZZ} \RR$ with the
structure of an $L$\nobreakdash-equivariant cell complex. A \emph{compatible
  $\ZZ^n$-stratification} is a map $\psi \colon \RL \to \ZZ^{n}$ that is
constant on the relative interior of each cell in $\RL$ and compatible with
translations by elements of $L$; see \Cref{d:strat}. Homogenizing the cellular
$\kk$-complex of $\RL$ using $\psi$ yields a cellular free $S[L]$-complex
$F_{\psi}$ whose $i$th term is
\[
  (F_{\psi})_{i}
  = \bigoplus_{\begin{subarray}{c}
      \sigma \subset \RL \\
      \dim \sigma = i
    \end{subarray}}
  S \bigl(- \psi(\sigma) \kern-1.0pt \bigr)
\]
and whose differentials are detailed in \Cref{d:comp}.

The algebraic counterpart of passing from the universal cover $\RL$ to the
real torus $\RL \mathbin{\!/\!} L$ is realized via the extension-of-scalars
functor $M \mapsto M \otimes_{S[L]} S$, which transforms any $\ZZ^{n}$-graded
$S[L]$-module into a $(\ZZ^n \mathbin{\!/\!} L)$-graded $S$-module; see
\Cref{r:com}.  Our main contribution is the following theorem.

\begin{theorem}
  \label{t:main}
  When $\psi \colon \RL \to \ZZ^{n}$ is defined by
  $\psi(\bm{p}) \colequal \ceil{\bm{p}}$ for all
  $\bm{p} \in \RL \subseteq \RR^{n}$, the cellular free $S$-complex
  $F_{\psi} \otimes_{S[L]} S$ is a $(\ZZ^n \mathbin{\!/\!} L)$-graded
  resolution of the integral closure of the $S$-module
  $S \mathbin{\!/\!} I_{L}$.
\end{theorem}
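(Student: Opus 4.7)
The plan is to analyze the complex $F_{\psi} \otimes_{S[L]} S$ one $\ZZ^{n} \mathbin{\!/\!} L$-graded piece at a time, via an analogue of the Hochster--Bayer--Sturmfels formula for cellular resolutions. For each class $\bar{\bm{a}} \in \ZZ^{n} \mathbin{\!/\!} L$ with any integer lift $\bm{a} \in \ZZ^{n}$, I would show that the graded piece $(F_{\psi} \otimes_{S[L]} S)_{\bullet, \bar{\bm{a}}}$ is naturally isomorphic, as a complex of $\kk$-vector spaces, to the cellular chain complex of the polyhedron
\[
  P_{\bm{a}} \colequal \RL \cap (\bm{a} - \RR_{\geq 0}^{n})
  = \set{\bm{p} \in \RL : \bm{p} \leq \bm{a}}
\]
with its cell structure inherited from $\RL$. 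Two ingredients go into this: since $F_{\psi}$ is free over $S[L]$, the $L$-action yields isomorphisms between the $\ZZ^{n}$-graded pieces indexed by the various lifts of $\bar{\bm{a}}$, so the extension of scalars collapses them to the single copy $(F_{\psi})_{\bullet, \bm{a}}$; and the definition $\psi(\bm{p}) = \ceil{\bm{p}}$ ensures that a cell $\sigma \subset \RL$ satisfies $\psi(\sigma) \leq \bm{a}$ exactly when $\sigma \subseteq P_{\bm{a}}$, so the cells of $P_{\bm{a}}$ are precisely the basis elements of $(F_{\psi})_{\bullet, \bm{a}}$.

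Once this identification is in hand, the geometric content is short. Each $P_{\bm{a}}$ is a convex polyhedron, hence contractible when nonempty, so its cellular chain complex has $H_0 = \kk$ and vanishing higher homology. Moreover, $P_{\bm{a}} \neq \emptyset$ iff $\bm{a}$ lies in the rational polyhedral cone $\RL + \RR_{\geq 0}^{n}$; the integer points of this cone, modulo $L$, form precisely the saturation of the image of $\NN^{n}$ in $\ZZ^{n} \mathbin{\!/\!} L$, which is the underlying semigroup of $\overline{S/I_L}$. Thus the Hilbert function of $H_0(F_{\psi} \otimes_{S[L]} S)$ agrees with that of $\overline{S/I_L}$, and the higher homology vanishes in every degree.

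To upgrade the Hilbert function match to an isomorphism of $S$-modules, I would construct a natural $S$-linear map $\phi \colon (F_{\psi} \otimes_{S[L]} S)_0 \to \overline{S/I_L}$ sending the generator of the summand indexed by a vertex $v$ of $\RL \mathbin{\!/\!} L$ to the unique basis element of degree $\overline{\psi(\tilde v)}$ in $\overline{S/I_L}$, for any lift $\tilde v$ of $v$ to $\RL$. This basis element exists because $\ceil{\tilde v} \in \RL + \RR_{\geq 0}^{n}$, and the choice is independent of the lift since $\overline{\psi(\tilde v)}$ depends only on the $L$-orbit of $\tilde v$. The map $\phi$ kills the image of $d_1$, since the two terms in the boundary of an edge both land on the basis element of $\overline{S/I_L}$ in degree $\overline{\psi(\tilde e)}$. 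Because the recession cone of $P_{\bm{a}}$ is pointed, every nonempty $P_{\bm{a}}$ has at least one vertex; hence the induced map $H_0 \to \overline{S/I_L}$ is surjective in every graded degree, and by the Hilbert function match it is an isomorphism. I expect the Hochster-type identification to be the technical heart of the argument, requiring careful bookkeeping with the $\ZZ^{n}$-grading through $-\otimes_{S[L]} S$ and with the $L$-equivariance of $\psi$; the remaining steps are routine once this formula is in hand.
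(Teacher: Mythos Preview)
Your proposal is correct and follows essentially the same approach as the paper: the identification of $(F_{\psi} \otimes_{S[L]} S)_{\bar{\bm{a}}}$ with the cellular chain complex of the polyhedron $P_{\bm{a}} = (\RL)_{\leqslant \bm{a}}$ is precisely the content of \Cref{p:res} combined with the degree-by-degree argument sketched for \Cref{c:res}, and the contractibility of $P_{\bm{a}}$ is \Cref{p:strat}. The only cosmetic difference is in identifying $H_0$: the paper names the intermediate monomial module $M_{\psi}$ and computes $M_{\psi} \otimes_{S[L]} S$ via semigroup saturation in \Cref{p:gens} and \Cref{p:nor}, whereas you bypass $M_{\psi}$ and argue directly from the nonemptiness criterion $P_{\bm{a}} \neq \emptyset \Leftrightarrow \bm{a} \in \RL + \RR_{\geqslant 0}^{n}$ together with a Hilbert-function match and an explicit surjection.
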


A uniform combinatorial description of a free resolution of the integral
closure of $S \mathbin{\!/\!} I_{L}$ for an arbitrary toric ideal $I_{L}$ is
new. The key innovation is the introduction of compatible
$\ZZ^{n}$-stratifications, which allow nonintegral vertices (or $0$-cells) in
$\RL$ to be labeled by monomials in $S$. When the vertices lie in the lattice
$L$, this framework recovers the constructions in \cite{BS98}*{\S3} and
\cite{MS05}*{\S9.2}. In particular, the minimal free resolution of a
unimodular Lawrence ideal described in \cite{BPS01}*{Corollary~3.6} arises as
a special case.

By decoupling vertices from lattice points, we expand the range of viable cell
structures on the space $\RL$. Both \Cref{e:And-ep} and \Cref{l:FH} illustrate
this flexibility. For any compatible $\ZZ^{n}$-stratification, \Cref{p:res},
\Cref{c:res}, and \Cref{l:cond} develop the fundamental theory of cellular
free $S$-complexes. Furthermore, \Cref{t:main} implies that the integral
closure of $S \mathbin{\!/\!} I_{L}$ is Cohen--Macaulay, thereby providing an
alternative proof of \cite{Hoc72}*{Theorem~1}.  Taken together, these advances
substantially enhance the power and scope of the theory of cellular free
resolutions.

\subsection*{Primary geometric application}

Our original motivation was to recast the locally free resolutions of
\cite{HHL24} in the language of combinatorial commutative algebra. To this
end, consider a torus-equivariant closed embedding $\varphi \colon Y \to X$ of
a normal toric variety $Y$ into a smooth toric variety $X$ with no torus
factors. In this setting, the polynomial ring $S$ is the Cox ring of $X$, the
image $\varphi(Y)$ is cut out by a toric ideal in $X$, and $L \subseteq
\ZZ^{n}$ is the corresponding lattice; see \Cref{r:toric}. Let $\psi
\colon \RL \to \ZZ^{n}$ be the ceiling stratification; see \Cref{d:ceil}.

The main result~\cite{HHL24}*{Theorem~A} constructs an explicit resolution by
line bundles of the $\mathcal{O}_{X}$-module $\varphi_{*}^{}
\mathcal{O}_{Y}$. For the ceiling stratification, \Cref{t:hhl} identifies this
resolution with the $\mathcal{O}_{X}$-complex associated to the cellular free
$S$-complex $F_{\psi} \otimes_{S[L]} S$. This perspective circumvents the
inductive arguments and discrete Morse theory used in
\cite{HHL24}*{\S1.3}. For similar reasons, the preprint \cite{BH25}, which
acknowledges that it was ``inspired by an Oberwolfach talk by [the first
author]'' (see \cite{Ber25}), employs essentially the same cellular
techniques.

Beyond this reinterpretation, our algebraic approach demonstrates that the
locally free resolution corresponds to an actual resolution of $S$-modules,
rather than merely a virtual resolution in the sense of
\cite{BES20}*{Definition~1.1}. Although \cite{HHL24}*{\S2.3} already
emphasizes the ceiling stratification, \Cref{p:nor} and \Cref{t:resSat}
further reveal its strong connection to integral closures. In this way, the
cellular free $S$-complexes provide a remarkably simple and concise source for
all of these locally free resolutions.

\subsection*{Understanding virtual resolutions}

More significantly, the cellular free complexes arising from a ceiling
stratification provide a framework for comparing the locally free resolutions
in \cite{HHL24}*{Theorem~3.5} and \cite{BE24}*{Theorem~1.2}. Specifically,
\Cref{c:hhl} shows that the resolution in \cite{BE24}*{Theorem~1.2} is
isomorphic to a direct summand of that in \cite{HHL24}*{Theorem~3.5}.

For the diagonal embedding $\Delta \colon Y \to X \colequal Y \times Y$ of a
smooth toric variety $Y$ with no torus factors, \Cref{t:cell-min} and
\Cref{c:HHL-BE} identify the resolutions by line bundles in both
\cite{BE24}*{Theorem~1.2} and \cite{HHL24}*{Theorem~3.5} with the
$\mathcal{O}_{X}$-complex associated to the minimal cellular free resolution
of the integral closure of $S \mathbin{\!/\!} I_L$. This corollary justifies
the belief expressed in \cite{BE24}*{p.3} and provides an explicit description
of the terms; see also \cite{BE24}*{Remark~1.6}. In essence, these cellular
free $S$-complexes completely explain the relationship between the virtual
resolutions in \cite{BE24}*{Theorem~1.2} and \cite{HHL24}*{Theorem~3.5}.

\subsection*{Further geometric applications}

By exploiting compatible $\ZZ^n$-stratifications other than the ceiling
stratification, we incorporate additional resolutions of the
diagonal for the toric variety $Y$ within our framework.  For the construction in \cite{And24}*{Theorem~1.1}, this is accomplished by
introducing an appropriate compatible $\ZZ^{n}$-stratification; see
\Cref{d:And}. \Cref{e:And} underscores some of the subtleties connected with
the associated cellular free complexes.  For the resolution of the diagonal in
\cite{FH25}*{Example~3.15}, the primary challenge is identifying the relevant
cell structure, and \Cref{p:FH} clarifies the case $Y = \PP^{2}$.

\subsection*{Conventions}

Throughout the document, $\NN$ denotes the set of nonnegative integers, and
$\kk$ denotes a field. The phrase `cell complex' is synonymous with `CW
complex.'

\subsection*{Outline}

Section~\ref{s:cell} introduces compatible $\ZZ^n$-stratifications and extends
the theory of cellular free resolutions to this broader
setting. Section~\ref{s:ceil} shows that the componentwise ceiling function is
a compatible $\ZZ^n$-stratification which yields a free resolution of the
integral closure of the comodule of a toric ideal. Section~\ref{s:app}
explores applications of this ceiling stratification, demonstrating that our
construction recovers the virtual resolutions in \cite{HHL24}*{Theorem~3.5}
and elucidates the locally free resolutions in
\cite{BE24}*{Theorem~1.2}. Section~\ref{s:oth} situates other resolutions of
the diagonal---namely those by \cite{And24}*{Theorem~1.1} and
\cite{FH25}*{Example~3.15}---within this unified framework.

\section{Cellular free resolutions from stratifications}
\label{s:cell}

\noindent
In this section, we construct free resolutions of monomial modules by
extending the cellular techniques from \cite{BS98}*{\S3} and
\cite{MS05}*{\S4}. Chiefly, we allow for more general monomial labelings of
the cells by introducing a stratification. We also remove the regularity
assumption on the underlying cell complex and the positivity assumption on the
lattice $L$, allowing $L \cap \NN^{n} \neq \set{\bm{0}}$.

\subsection*{Topological context}

Fix a positive integer $n$.  The \emph{lattice} $L$ is a subgroup of the free
abelian group $\ZZ^n$.  Let $d$ be the rank of $L$, and let $\RL \colequal L
\otimes_{\ZZ} \RR \cong \RR^{d}$ be the associated real vector space endowed
with the Euclidean topology. The quotient space $\RL \mathbin{\!/\!} L$ is a
real topological torus. The inclusion $L \subseteq \ZZ^n$ gives rise to an
inclusion of real vector spaces $\RL \subseteq \RR^n$. We tacitly regard the
elements of $L$ or $\RL$ as elements of $\ZZ^n$ or $\RR^n$ respectively.

A cell complex in $\RL$ is \emph{$L$\nobreakdash-equivariant} if, for each open
cell $\sigma$ and each lattice point $\bm{v} \in L$, the translate
$\sigma + \bm{v}$ is also an open cell.  Assume that the topological space
$\RL$ is endowed with the structure of an $L$-equivariant cell complex; see
\cite{Hat02}*{Appendix} for more on cell complexes.  Via the quotient map
$\pi \colon \RL \to \RL \mathbin{\!/\!} L$, the $L$-equivariant cell complex
on $\RL$ equips the real topological torus $\RL\mathbin{\!/\!} L$ with the
structure of a cell complex.

The following is our favorite source for $L$-equivariant cell complex
structures on the space $\RL$.

\begin{remark}[Cell complex arising from a periodic arrangement]
  \label{r:cell}
  As in \cite{DP11}*{\S2.1}, a central hyperplane arrangement in the space
  $\RL$ is the set of hyperplanes
  $\set{\bm{x} \in \RL \mathrel{\big|} \bm{u}_i \cdot \bm{x} = 0}$ for some
  finite set of vectors $\bm{u}_{1}, \bm{u}_{2}, \dotsc, \bm{u}_{m}$ in
  $\Hom(L, \ZZ)$. The \emph{periodic arrangement} $\mathcal{A}$ consists of
  all affine hyperplanes of the form
  $H \colequal \set{ \bm{p} \in \RL \mathrel{\big|} \bm{u}_{i} \cdot \bm{p} =
    j}$, where $1 \leqslant i \leqslant m$ and $j$ is an integer.  A
  \emph{chamber} $\sigma$ of the arrangement $\mathcal{A}$ is a connected
  component of the complement $\RL \setminus \bigcup_{H \in \mathcal{A}} H$.
  Hence, the closure $\overline{\sigma}$ is a convex polyhedron. A \emph{face}
  of $\mathcal{A}$ is the face of the polyhedron $\overline{\sigma}$: the
  intersection of $\overline{\sigma}$ with a supporting hyperplane.  The faces
  of $\mathcal{A}$ endow the space $\RL$ with the structure of a polyhedral
  cell complex; see \cite{MS05}*{Definition~4.1}. Unlike in
  \cite{BPS01}*{\S3}, the vertices (or $0$-cells) of this polyhedral cell
  complex need not be lattice points.  Under the quotient map
  $\pi \colon \RL \to \RL \mathbin{\!/\!} L$, this periodic arrangement
  defines a \emph{toric arrangement} in $\RL \mathbin{\!/\!} L$; see
  \cite{DP11}*{Definition~14.2}.
\end{remark}

A specific instance of this construction is significant enough to warrant a
name.

\begin{definition}
  \label{d:std}
  The \emph{standard cell structure} on the space $\RL \subseteq \RR^n$ is the
  polyhedral cell structure determined by the periodic arrangement arising
  from the standard basis vectors in $\RR^n$.
\end{definition}

We exploit the standard cell structure in toric geometry. For clarity and
brevity, we focus on smooth toric varieties, with extensions to singular toric
varieties and toric stacks left to the reader.

\begin{remark}[Cell complex arising from a toric embedding]
  \label{r:toric}
  Fix a smooth toric variety $X$ with no torus factors; see
  \cite{CLS11}*{Corollary~3.3.10}. Let $\varphi \colon Y \to X$ be a
  torus-equivariant embedding of a normal toric variety $Y$.  The toric
  morphism $\varphi$ corresponds to an injective $\ZZ$-linear map
  $\overline{\varphi} \colon N_{Y} \to N_{X}$ between the lattices of
  one-parameter subgroups of $Y$ and $X$; see \cite{CLS11}*{Theorem~3.3.4}.
  Dualizing the map $\overline{\varphi}$ gives the surjective $\ZZ$-linear map
  $\overline{\varphi}^{*} \colon M_{X} \to M_{Y}$ between the character
  lattices of $X$ and $Y$. Set $m$ and $n$ to be the number of rays in the
  fans of $Y$ and $X$, respectively. The groups of torus-invariant Weil
  divisors on $Y$ and $X$ are identified with $\ZZ^m$ and $\ZZ^n$,
  respectively. Thus, we obtain the commutative diagram
  \[
    \begin{tikzcd}[row sep=0.8em, column sep=3.0em]
      & 0 \arrow[d] &&& \\
      & L \arrow[dd, "\kappa" left]
      \arrow[ddr, "\iota" above] &&& \\ \\
      0 \arrow[r] & M_X \arrow[dd, "\overline{\varphi}^{*}" left]
      \arrow[r, "\nu" below]
      & \ZZ^n \arrow[r] \arrow[dd] & \Pic(X) \arrow[r] \arrow[dd]
      & 0 \, \phantom{.} \\ \\
      0 \arrow[r] & M_Y \arrow[r] \arrow[d] & \ZZ^m \arrow[r]
      & \Pic(Y) \arrow[r] & 0 \, , \\
      & 0 &&&
    \end{tikzcd}
  \]
  where $L \colequal \Ker(\overline{\varphi}^{*})$, the injection $\kappa$ is
  the canonical inclusion, and the rows in the matrix of the map $\nu$ are the
  primitive lattice points generating the rays in the fan of $X$; see
  \cite{CLS11}*{Theorem~4.2.1}.  The image of the composite map
  $\iota \colequal \nu \circ \kappa$ realizes the lattice $L$ as a subgroup of
  $\ZZ^{n}$ with rank $d \colequal \dim X - \dim Y$.  In this case, we always
  equip the space $\RL$ with its standard cell structure.
\end{remark}

We illustrate this toric construction with a concrete example.

\begin{example}[Cell complex arising from the inclusion of a point into
  Hirzebruch surface]
  \label{e:H2}
  Let
  $X \colequal \PP \bigl( \mathcal{O}_{\PP^1} \kern-0.5pt \oplus
  \mathcal{O}_{\PP^1}(-2) \kern-1.0pt \bigr)$ denote the second Hirzebruch
  surface. The rays in the fan of the toric variety $X$ are generated by the
  primitive lattice points $(1, 0)$, $(0, 1)$, $(-1, 2)$, and $(0, -1)$; see
  \cite{CLS11}*{Example~3.1.16}.  Consider the inclusion of the identity point
  $Y \colequal \set{[1 \mathbin{:} 1 \mathbin{:} 1 \mathbin{:} 1 ]}$ in the
  dense torus of $X$.  This corresponds to the unique $\ZZ$-linear map
  $\overline{\varphi} \colon \ZZ^{0} \to \ZZ^{2}$.  The lattice
  $L$ has rank~$2$, and the matrix of the inclusion
  $\iota \colon \ZZ^2 \cong L \to \ZZ^4$, with respect to the standard bases, is
  \[
    \begin{bmatrix*}[r]
       1 &  0 \\[-2pt]
       0 &  1 \\[-2pt]
      -1 &  2 \\[-2pt]
       0 & -1 \\
    \end{bmatrix*} \, .
  \]
  The standard cell structure on the real vector space $\RL \cong \RR^2$ is
  induced by the standard basis in $\RR^4$. Identifying $\RL$ with the real vector
  space $\RR^2$, the polyhedral cell structure arises from the rows of this matrix
  under the standard basis.  Hence, a fundamental domain for the
  $L$-action on $\RL$ contains two $0$-cells, five $1$-cells, and three
  $2$-cells, as depicted in \Cref{f:H2lab}.

  As the hyperplanes with normal vectors $(0,1)$ and $(0,-1)$ coincide, only
  three colors are needed to display the fundamental domain.  The hyperplanes
  with normal vectors $(1,0)$ and $(-1,2)$ intersect at a point outside of
  $L$, so each gives rise to two distinct $1$-cells in the fundamental domain.
\end{example}

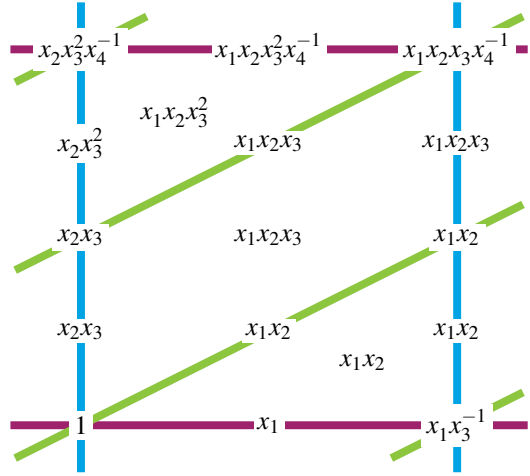
\begin{figure}[ht]
  \centering
  \begin{tikzpicture}[xscale=1.25, yscale=1.25, line width=3pt]
  \usetikzlibrary{calc, decorations.pathreplacing,shapes.misc, arrows.meta}
  \usetikzlibrary{decorations.pathmorphing}
  
  \tikzstyle{fuzz}=[RedViolet]
  \tikzstyle{fuzzB}=[Cerulean]
  \tikzstyle{fuzzP}=[LimeGreen]

  \coordinate  (origin) at (0,0);
  \coordinate (br) at (4,0);
  \coordinate (tr) at (4,4);
  \coordinate (tl) at (0,4);
  \coordinate (ml) at (0,2);
  \coordinate (mr) at (4,2);
  
  \coordinate (hoff) at (0.75,0); 
  \draw[RedViolet] ($(origin)-(hoff)$) -- ($(br)+(hoff)$);
  \draw[RedViolet] ($(tl)-(hoff)$)--($(tr)+(hoff)$);

  \coordinate (voff) at (0,0.5);
  \draw[Cerulean] ($(origin)-(voff)$) -- ($(tl)+(voff)$);
  \draw[Cerulean] ($(br)-(voff)$) -- ($(tr)+(voff)$);

  \coordinate (diagoff) at (0.7,0.35);
  \draw[LimeGreen] ($(br)-(diagoff)$) -- ($(br)+(diagoff)$);
  \draw[LimeGreen] ($(origin)-(diagoff)$) -- ($(mr)+(diagoff)$);
  \draw[LimeGreen] ($(ml)-(diagoff)$) -- ($(tr)+(diagoff)$);
  \draw[LimeGreen] ($(tl)-(diagoff)$) -- ($(tl)+(diagoff)$);

  \node[inner sep = 2pt, fill=white] at (origin) {\footnotesize $1$};
  \node[inner sep = 2pt, fill=white] at (ml) {\footnotesize $x_2 x_3$};
  \node[inner sep = 2pt, fill=white] at (tl) {\footnotesize $x_2^{} x_3^2 x_4^{-1}$};
  \node[inner sep = 2pt, fill=white] at (br) {\footnotesize $x_1^{} x_3^{-1}$};
  \node[inner sep = 2pt, fill=white] at (mr) {\footnotesize $x_1 x_2$};
  \node[inner sep = 2pt, fill=white] at (tr)
  {\footnotesize $x_1^{} x_2^{} x_3^{} x_4^{-1}$};

  \begin{scope}[inner sep = 1.5pt]
    \node[fill=white] at ($(origin)!0.5!(br)$) {\footnotesize $x_1$};
    \node[fill=white] at ($(tl)!0.5!(tr)$) {\footnotesize
      $x_1^{} x_2^{} x_3^2 x_4^{-1}$};
    \node[fill=white] at ($(origin)!0.5!(ml)$) {\footnotesize $x_2x_3$};
    \node[fill=white] at ($(ml)!0.5!(tl)$) {\footnotesize $x_2^{} x_3^2$};
    \node[fill=white] at ($(br)!0.5!(mr)$) {\footnotesize $x_1 x_2$};
    \node[fill=white] at ($(mr)!0.5!(tr)$) {\footnotesize $x_1 x_2 x_3$};
    \node[fill=white] at ($(origin)!0.5!(mr)$) {\footnotesize $x_1 x_2$};
    \node[fill=white] at ($(ml)!0.5!(tr)$) {\footnotesize $x_1 x_2 x_3$};
    
    \node[fill=white] at (3,2/3) {\footnotesize $x_1 x_2$};
    \node[fill=white] at (2,2) {\footnotesize $x_1 x_2 x_3$};
    \node[fill=white] at (1,10/3) {\footnotesize $x_1^{} x_2^{} x_3^2$};
  \end{scope}
\end{tikzpicture}
  \vspace{-0.75em}
  \caption{A compatible $\ZZ^4$-stratification for the closed embedding of
    the identity point into the second Hirzebruch surface.}
  \label{f:H2lab}
\end{figure}

In addition to the cell structure on the space $\RL$, we also regard the free
abelian group $\ZZ^n$ as a topological space.  Since the set $\ZZ^n$ is a
poset under the componentwise order, it carries the \emph{Alexandrov
  topology}: a subset $U$ of $\ZZ^n$ is open if, whenever
$\bm{u} \leqslant \bm{v}$ and $\bm{u} \in U$, it follows that $\bm{v} \in U$.
Building on \cite{Lur17}*{Definition~A.5.1}, we make the following definition.

\begin{definition}
  \label{d:strat}
  Let $L$ be a lattice in $\ZZ^n$ and assume that the space $\RL$ is endowed
  with the structure of an $L$-equivariant cell complex. A continuous map
  $\psi \colon \RL \to \ZZ^n$ is a \emph{compatible $\ZZ^n$-stratification} if
  it is constant on each open cell in $\RL$ and satisfies
  $\psi(\bm{p} + \bm{v}) = \psi(\bm{p}) + \bm{v}$ for all points
  $\bm{p} \in \RL$ and all lattice points $\bm{v} \in L$. For each open cell
  $\sigma \subset \RL$, define $\psi(\sigma) \colequal \psi(\bm{p})$ for any
  point $\bm{p} \in \sigma$; this is well-defined because $\psi$ is constant
  on each cell.
\end{definition}

Since $\ZZ^n$ is endowed the Alexandrov topology, continuity of $\psi$ is
equivalent to requiring that, for each lattice point $\bm{u} \in \ZZ^n$, the
subset $(\RL)_{\leqslant \bm{u}} \colequal \set{\bm{p} \in \RL \mathrel{\big|}
  \psi(\bm{p}) \leqslant \bm{u}}$ is closed.  The second condition defining
$\psi$ implies that it is compatible with lattice translation. By design, a
compatible $\ZZ^n$-stratification $\psi \colon \RL \to \ZZ^n$ is determined by
its values on a fundamental domain for the translation action of the lattice
$L$ on $\RL$.  In our applications, the topological space $\RL$ typically has
the coarsest cell structure that allows a given map $\psi$ to define a
compatible $\ZZ^n$-stratification.

We recover the labeled cell complexes in \cite{MS05}*{Definition~4.2} as a
special case.

\begin{example}[Least common multiple stratifications]
  \label{e:lcm}
  A compatible $\ZZ^n$-stratification $\psi \colon \RL \to \ZZ^n$ is defined
  by assigning a lattice vector in $\ZZ^n$ for each vertex in a fundamental
  domain.  Since a compatible stratification commutes with translation by
  lattice points in $L$, the value $\psi$ at every vertex in $\RL$ is
  determined by this assignment.  For any higher-dimensional open cell
  $\sigma$, the value $\psi(\sigma)$ is the componentwise maximum of the
  lattice vectors assigned to the vertices in the closure of $\sigma$.
\end{example}

The following tangible geometric example serves to further highlight
\Cref{d:strat}.

\begin{example}[Compatible $\ZZ^4$-stratification for an embedding into
  toric surface]
  \label{e:Hceil}
  As in \Cref{e:H2}, consider the closed embedding $\varphi \colon Y \to X$ of
  the identity point $Y$ into the second Hirzebruch surface $X$.  The map
  $\psi \colon \RL \to \ZZ^{4}$ defined by
  $\psi(\bm{p}) \colequal \ceil{\bm{p}}$ for all $\bm{p} \in \RL \subset \RR^4$
  is a compatible $\ZZ^4$\nobreakdash-stratification. Let
  $\iota_\RR \colon \RR^2 \to \RR^4$ be the linear map induced by the
  inclusion $\iota \colon \ZZ^2 \to \ZZ^4$.  For all
  $(q_1, q_2) \in \RR^2 \cong \RL$, we have
  \[
    \psi \bigl( \iota_{\RR}(q_1,q_2) \bigr)
    = \ceil{(q_1,q_2,-q_1+2q_2,-q_2)}
    = (\ceil{q_1}, \ceil{q_2}, \ceil{-q_1+2q_2}, \ceil{-q_2}) \, .
  \]
  In \Cref{f:H2lab}, we display the monomial with exponent vector
  $\psi(\sigma)$ at the center of the cell $\sigma$.
\end{example}

We record one technical feature of compatible $\ZZ^n$-stratifications.

\begin{lemma}
  \label{l:lab}
  Let $\psi \colon \RL \to \ZZ^n$ be a compatible $\ZZ^n$-stratification.  For
  any open cells $\sigma$ and $\tau$ in the space $\RL$, the inclusion
  $\tau \subseteq \overline{\sigma}$ implies that
  $\psi(\tau) \leqslant \psi(\sigma)$.
\end{lemma}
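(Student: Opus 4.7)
The plan is to translate the inclusion $\tau \subseteq \overline{\sigma}$ into a containment of sublevel sets of $\psi$, using only the continuity of $\psi$ with respect to the Alexandrov topology on $\ZZ^n$. Specifically, the excerpt already records the equivalence that continuity of $\psi$ is the same as the closedness of each sublevel set $(\RL)_{\leqslant \bm{u}} = \set{ \bm{p} \in \RL \mathrel{\big|} \psi(\bm{p}) \leqslant \bm{u}}$ in the Euclidean topology on $\RL$. That equivalence is the only ingredient we need, so the $L$-equivariance condition on $\psi$ will play no role in this particular lemma.

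With this setup, I would first set $\bm{u} \colequal \psi(\sigma)$. Because $\psi$ is constant on the open cell $\sigma$ with value $\bm{u}$, the inclusion $\sigma \subseteq (\RL)_{\leqslant \bm{u}}$ is immediate. Closedness of $(\RL)_{\leqslant \bm{u}}$ then yields $\overline{\sigma} \subseteq (\RL)_{\leqslant \bm{u}}$. By hypothesis $\tau \subseteq \overline{\sigma}$, so $\tau \subseteq (\RL)_{\leqslant \bm{u}}$, and picking any $\bm{p} \in \tau$ gives $\psi(\tau) = \psi(\bm{p}) \leqslant \bm{u} = \psi(\sigma)$.

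There is essentially no obstacle here: the statement is a direct unpacking of the Alexandrov continuity of $\psi$, which, under the convention that open sets in $\ZZ^n$ are the upward-closed subsets, is precisely the requirement that the downward-closed sublevel sets have closed preimages. The only point that needs a moment of care is checking the direction of the order on $\ZZ^n$ so that these preimages are closed rather than open, but that has already been verified in the paragraph preceding \Cref{d:strat}.
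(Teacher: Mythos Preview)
Your proof is correct and follows essentially the same approach as the paper's: both set $\bm{u} = \psi(\sigma)$, observe that the sublevel set $(\RL)_{\leqslant \bm{u}} = \psi^{-1}(\psi(\sigma) - \NN^n)$ is closed by continuity of $\psi$, and conclude that $\overline{\sigma}$ (hence $\tau$) lies inside it. The only cosmetic difference is that the paper phrases the closed set as the preimage of the downward cone $\psi(\sigma) - \NN^n$ in $\ZZ^n$, while you invoke the already-recorded equivalence between continuity and closedness of sublevel sets directly.
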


\begin{proof}
  In the Alexandrov topology, the subset
  $\psi(\sigma) - \NN^{n} \colequal \set{\psi(\sigma) - \bm{u} \in \ZZ^n
    \mathrel{\big|} \bm{u} \in \NN^{n}}$ is a closed set.  As the map $\psi$
  is continuous, the preimage
  $\psi^{-1} \kern-1.0pt \bigl( \psi(\sigma) - \NN^n \bigr)$ is closed and
  contains $\sigma$.  The hypothesis $\tau \subseteq \overline{\sigma}$ shows
  that the preimage also contains $\tau$. We deduce that
  $\psi(\tau) \leqslant \psi(\sigma)$.
\end{proof}

\subsection*{Homological algebra from the universal cover}

In algebraic topology, the cell structure on the space $\RL$ leads to a
complex of vector spaces over the field $\kk$. The map $\psi \colon \RL \to
\ZZ^{n}$ allows one to transform this into a graded complex of modules over a
polynomial ring.  Recall that $d \colequal \dim \RL$.

For any open cells $\sigma, \tau \subset \RL$, let $\varepsilon(\sigma, \tau)$
be the \emph{incidence number} (also known as the \emph{topological degree})
of the composition of the attaching map of $\sigma$ with the quotient map
collapsing the complement of $\tau$ to a point; see \cite{Hat02}*{p.~140}. The
resulting map is a continuous map between spheres, and its topological degree
measures how many times the boundary sphere of $\sigma$ wraps around the
sphere corresponding to $\tau$; see \cite{Hat02}*{p.~134}. The integer
$\varepsilon(\sigma, \tau)$ depends implicitly on the orientations chosen for
the cells. In particular, we have $\varepsilon(\sigma, \tau) = 0$ unless
$\dim \sigma = 1 + \dim \tau$ and $\tau$ is contained in the closure
$\overline{\sigma}$. The $L$-equivariance of the cell structure implies that
$\varepsilon(\sigma + \bm{v}, \tau + \bm{v}) = \varepsilon(\sigma, \tau)$ for
any lattice point $\bm{v} \in L$. When the cell structure on $\RL$ is regular,
the incidence numbers take values only in $\set{-1, 0, 1}$; see
\cite{MS05}*{\S4.1}.

The Cellular Boundary Formula in \cite{Hat02}*{\S2.2} establishes that the
\emph{cellular $\kk$-complex} of $\RL$ is
\[
  0
  \longleftarrow C_0(\RL; \kk)
  \xleftarrow{\quad \partial_1 \quad} C_1(\RL; \kk)
  \xleftarrow{\quad \partial_2 \quad} \dotsb
  \xleftarrow{\quad \partial_d \quad} C_{d}(\RL; \kk)
  \longleftarrow 0
\]
with
\vspace{-1.0em}
\begin{align*}
  C_{i}(\RL; \kk)
  &\colequal \bigoplus_{\begin{subarray}{c}
      \sigma \subset \RL \\
      \dim \sigma = i
    \end{subarray}}
  \kk \, \sigma \, ,
  &&\text{and}
  & \partial \sigma
  &= \sum_{\tau \subset \RL} \varepsilon(\sigma, \tau) \, \tau \, ,
\end{align*}
where each the open cell $\sigma$ (or $\tau$) is identified with a generator
of the corresponding summand of the direct sum $C(\RL; \kk)$.

To generalize the cellular free complexes in \cite{BS98}*{\S1} and
\cite{MS05}*{Definition~4.3}, we use a compatible $\ZZ^n$-stratification
$\psi \colon \RL \to \ZZ^n$ to homogenize the cellular $\kk$-complex
$C(\RL; \kk)$.  Let $S \colequal \kk[x_1, x_2, \dotsc, x_n]$ denote the
$\ZZ^n$-graded polynomial ring with
$\deg(\bm{x}^{\bm{u}}) \colequal \bm{u} \in \ZZ^n$.  For any open cell
$\sigma \subset \RL$, let
$S \, \sigma \cong S \bigl( \kern-0.5pt - \psi(\sigma) \kern-1.0pt \bigr)$ be
the free $\ZZ^n$-graded $S$-module with a unique generator, also denoted by
$\sigma$, in degree $\psi(\sigma)$.

\begin{definition}
  \label{d:comp}
  Let $L$ be a lattice in $\ZZ^n$.  For any compatible $\ZZ^n$-stratification
  $\psi \colon \RL \to \ZZ^n$, the \emph{cellular free $S$-complex} $F_\psi$
  is the $\ZZ^n$-graded $S$-complex
  \[
    0
    \longleftarrow (F_{\psi})_0
    \xleftarrow{\quad \partial_1 \quad} (F_{\psi})_1
    \xleftarrow{\quad \partial_2 \quad} \dotsb
    \xleftarrow{\quad \partial_d \quad} (F_{\psi})_{d}
    \longleftarrow 0
  \]
  with
  \vspace{-1.0em}
  \begin{align*}
    (F_{\psi})_i
    &= \bigoplus_{\begin{subarray}{c}
        \sigma \subset \RL \\
        \dim \sigma = i
      \end{subarray}}
    S \, \sigma \, ,
    &&\text{and}
    & \partial \sigma
    &= \sum_{\tau \subset \RL} \varepsilon(\sigma, \tau) \,
      \bm{x}^{\psi(\sigma) - \psi(\tau)} \, \tau \, .
  \end{align*}
  Moreover, the monomial $S$-module is
  $M_{\psi} \colequal S \cdot \set{ \smash{\bm{x}^{\psi(\bm{p})}}
    \mathrel{\big|} \bm{p} \in \RL} \subseteq \kk[x_{1}^{\vphantom{-1}},
  x_{1}^{-1}, x_2^{\vphantom{-1}}, x_{2}^{-1}, \dotsc, x_{n}^{\vphantom{-1}},
  x_{n}^{-1}]$.
\end{definition}

Our earlier technical observation clarifies some aspects of this definition.

\begin{remark}
  \label{r:gens}
  \Cref{l:lab} confirms that all the entries
  $\smash{\varepsilon(\sigma, \tau) \, \bm{x}^{\psi(\sigma) - \psi(\tau)}}$ in
  the matrix representing the differential of $F_{\psi}$ belong to
  $S$. Moreover, for any open cell $\sigma$ in $\RL$ and any vertex $\bm{p}$
  in the closure $\overline{\sigma}$, \Cref{l:lab} guarantees that
  $\smash{\bm{x}^{\psi(\sigma) - \psi(\bm{p})}} \in S$.  Hence, the
  factorization
  $\smash{\bm{x}^{\psi(\sigma)} = \bm{x}^{\psi(\sigma) - \psi(\bm{p})} \,
    \bm{x}^{\psi(\bm{p})}}$ shows that the vertices in $\RL$ form a generating
  set for the monomial $S$-module:
  \[
    M_{\psi} = S \cdot \set{ \smash{\bm{x}^{\psi(\bm{p})}} \mathrel{\big|}
      \text{the point $\bm{p}$ is a vertex in cell complex $\RL$}} \, .
  \]
  In other words, the map that sends a vertex $\bm{p} \in \RL$ to the Laurent
  monomial $\smash{\bm{x}^{\psi(\bm{p})}}$ defines a surjective $\ZZ^n$-graded
  $S$-module homomorphism from $(F_{\psi})_{0}$ onto $M_{\psi}$.
\end{remark}

\begin{example}[Monomial module for an embedding into a toric surface]
  \label{e:recur}
  Following Example~\ref{e:Hceil}, the map $\varphi \colon Y \to X$ is the
  inclusion of the identity point $Y$ in the dense torus into the second
  Hirzebruch surface $X$. A compatible $\ZZ^4$-stratification
  $\psi \colon \RL \to \ZZ^4$ is defined, for all $(q_1, q_2)$ in
  $\RR^2 \cong \RL$, by
  $\psi \bigl( \kern-0.5pt \iota_\RR(q_1,q_2) \kern-0.5pt \bigr) = (
  \ceil{q_1}, \ceil{q_2}, \ceil{-q_1+2q_2}, \ceil{-q_2})$.
  Since applying $\psi$ to the vertices gives
  $\psi \bigl( \iota_\RR \kern-0.5pt (0,0) \kern-0.5pt \bigr) = (0, 0, 0, 0)$
  and
  $\psi \bigl(\iota_\RR \kern-0.5pt (0, 0.5) \kern-0.5pt \bigr) = (0, 1, 1,
  0)$, the monomial $S$-module $M_{\psi}$ is generated by the set
  \[
    \set{\smash{x_{1}^{k} x_{2}^{\ell} x_{3}^{-k+2 \ell} x_{4}^{-\ell}},
      \smash{x_{1}^{k} x_{2}^{1+\ell} x_{3}^{1-k+2\ell} x_{4}^{-\ell}}
      \mathrel{\big|} k, \ell \in \ZZ} \, . \qedhere
  \]
\end{example}

Under suitable hypotheses, the cellular $S$-complex $F_{\psi}$ provides a free
resolution of the monomial $S$-module $M_{\psi}$; compare with
\cite{BS98}*{Proposition~1.2} and \cite{MS05}*{Proposition~4.5}.

\begin{proposition}
  \label{p:res}
  Let $\psi \colon \RL \to \ZZ^n$ be a compatible $\ZZ^n$-stratification. The
  cellular free $S$-complex $F_{\psi}$ is a $\ZZ^n$-graded free resolution of
  $M_{\psi}$ if and only if, for every lattice point $\bm{u} \in \ZZ^{n}$, the
  closed subset
  $(\RL)_{\leqslant \bm{u}} \colequal \set{\bm{p} \in \RL \mathrel{\big|}
    \psi(\bm{p}) \leqslant\bm{u}}$ is acyclic.
\end{proposition}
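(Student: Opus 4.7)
The plan is to adapt the standard argument for cellular resolutions (cf.\ \cite{BS98}*{Proposition~1.2} and \cite{MS05}*{Proposition~4.5}) to the present stratified setting. Since $F_{\psi}$ and the augmentation $F_{\psi} \to M_{\psi} \to 0$ are $\ZZ^{n}$-graded, exactness can be verified one multidegree at a time. I will identify the $\bm{u}$-graded strand of $F_{\psi}$ with the cellular $\kk$-chain complex of the subspace $(\RL)_{\leqslant \bm{u}}$; once this identification is in place, the criterion follows from standard cellular homology.

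Fix a lattice point $\bm{u} \in \ZZ^{n}$. The $\bm{u}$-graded piece of $S\,\sigma \cong S\bigl(\kern-0.5pt{-}\psi(\sigma)\kern-1.0pt\bigr)$ is the one-dimensional $\kk$-space spanned by $\bm{x}^{\bm{u} - \psi(\sigma)}$ when $\psi(\sigma) \leqslant \bm{u}$, and is zero otherwise. By \Cref{l:lab}, the collection of open cells $\sigma$ satisfying $\psi(\sigma) \leqslant \bm{u}$ is closed under taking faces, and so determines a CW-subcomplex of $\RL$; because $\psi$ is constant on each open cell, the underlying closed set of this subcomplex is exactly $(\RL)_{\leqslant \bm{u}}$. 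After rescaling each basis vector by the appropriate Laurent monomial, the differential entries $\varepsilon(\sigma, \tau)\, \bm{x}^{\psi(\sigma) - \psi(\tau)}$ of $F_{\psi}$ reduce to $\varepsilon(\sigma, \tau)$, producing a canonical isomorphism between $(F_{\psi})_{\bm{u}}$ and the cellular $\kk$-chain complex $C_{\bullet}\bigl((\RL)_{\leqslant \bm{u}}; \kk\bigr)$.

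The augmentation is handled similarly. By \Cref{r:gens}, the multidegree-$\bm{u}$ piece $(M_{\psi})_{\bm{u}}$ equals $\kk$ if some vertex $\bm{p} \in \RL$ satisfies $\psi(\bm{p}) \leqslant \bm{u}$, and is zero otherwise; the former condition is equivalent to $(\RL)_{\leqslant \bm{u}}$ being nonempty, since any nonempty CW-subcomplex of $\RL$ contains a vertex. Under the identification of the previous paragraph, the surjection $(F_{\psi})_{0} \to M_{\psi}$ in multidegree $\bm{u}$ becomes the standard augmentation $C_{0}\bigl((\RL)_{\leqslant \bm{u}}; \kk\bigr) \to \kk$ when $(\RL)_{\leqslant \bm{u}}$ is nonempty and the zero map otherwise.

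Combining the two steps, the $\bm{u}$-strand of the augmented complex $F_{\psi} \to M_{\psi} \to 0$ is precisely the augmented cellular chain complex of $(\RL)_{\leqslant \bm{u}}$ with coefficients in $\kk$. This strand is exact if and only if the reduced homology $\tilde{H}_{*}\bigl((\RL)_{\leqslant \bm{u}}; \kk\bigr)$ vanishes, which is the statement that $(\RL)_{\leqslant \bm{u}}$ is acyclic (with the usual convention that the empty space is acyclic, handling the case where $\bm{u}$ dominates no vertex of $\RL$). The main technical content lies in the identification of the multigraded strand with the cellular chain complex of the subcomplex; this hinges on \Cref{l:lab}, and the rest is bookkeeping.
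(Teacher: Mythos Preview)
Your proof is correct and follows essentially the same approach as the paper: both identify the degree-$\bm{u}$ strand of $F_{\psi}$ with the cellular chain complex of $(\RL)_{\leqslant \bm{u}}$ and then invoke \Cref{r:gens} to match the augmentation with $M_{\psi}$. Your version is somewhat more explicit---spelling out the role of \Cref{l:lab} in showing that $(\RL)_{\leqslant \bm{u}}$ is a genuine subcomplex and describing the rescaling of basis vectors---but the argument is the same.
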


\Cref{p:strat} exhibits a compatible $\ZZ^n$-stratification that satisfies
this topological condition.

\begin{proof}
  Fix a lattice point $\bm{u} \in \ZZ^{n}$.  Consider the subset
  $(\RL)_{\leqslant \bm{u}} \colequal \set{\bm{p} \in \RL \mathbin{\big|}
    \psi(\bm{p}) \leqslant \bm{u}}$ of $\RL$.  By construction, the degree
  $\bm{u}$ part of the cellular free $S$-complex $F_{\psi}$ is precisely the
  cellular $\kk$\nobreakdash-complex of the subcomplex
  $(\RL)_{\leqslant \bm{u}}$. It follows that $F_{\psi}$ is a free resolution
  of the monomial $S$-module $M_{\psi}$ if and only if
  $H_0 \bigl( \kern-0.5pt (\RL)_{\leqslant \bm{u}}; \kk \bigr) \cong \kk$ when
  $\bm{x}^{\bm{u}} \in M_{\psi}$, and
  $H_i \bigl( \kern-0.5pt (\RL)_{\leqslant \bm{u}}; \kk \bigr) = 0$ when
  $i > 0$ or $\bm{x}^{\bm{u}} \not\in M_{\psi}$.  \Cref{r:gens} establishes
  that $\bm{x}^{\bm{u}} \in M_{\psi}$ if and only if there exists a vertex
  $\bm{p} \in \RL$ such that $\psi(\bm{p}) \leqslant \bm{u}$.  Therefore, the
  $S$-complex $F_{\psi}$ is a free resolution of the $S$-module $M_{\psi}$ if
  and only if every nonempty subcomplex $(\RL)_{\leqslant \bm{u}}$ is acyclic.
\end{proof}

\subsection*{Homological algebra from the topological torus}

Following \cite{BS98}*{\S3} and \cite{MS05}*{\S9.3}, we develop an algebraic
counterpart to the topological relationship between the universal cover $\RL$
and the real torus $\RL \mathbin{\!/\!} L$.  Consider the $\ZZ^n$-graded group
algebra $S[L]$ of the lattice $L$ over the polynomial ring
$S \colequal \kk[x_1, x_2, \dotsc, x_n] = \kk[\NN^{n}]$; it is defined as
\[
  S[L] \colequal S[\bm{z}^{\bm{v}} \mathrel{|} \bm{v} \in L]
  \subseteq S[z_{1}^{\pm 1}, z_{2}^{\pm 1}, \dotsc, z_{n}^{\pm 1}]
  = S[\ZZ^n] \, ,
\]
where $\deg(\bm{x}^{\bm{u}} \, \bm{z}^{\bm{v}}) = \bm{u} + \bm{v} \in
\ZZ^n$. The coefficient ring $S$ of the group algebra $S[L]$ is itself an
$S[L]$-module via the isomorphism
\[
  S[L] \mathbin{\!/\!} \ideal{\bm{z}^{\bm{v}} - 1 \mathrel{|} \bm{v} \in L}
  \cong S \, .
\]
Similarly, the $\ZZ^n$-graded map of $\kk$-algebras from the group algebra $S[L]$
to the Laurent polynomial ring
\[
  R \colequal \kk[x_{1}^{\pm 1}, x_2^{\pm 1}, \dotsc, x_{n}^{\pm 1}]
  = \kk[\ZZ^n]
\]
determined by
$\bm{x}^{\bm{u}} \, \bm{z}^{\bm{v}} \mapsto \bm{x}^{\bm{u} + \bm{v}}$ makes
the ring $R$ into a $S[L]$-module.  Furthermore, for any compatible
$\ZZ^n$\nobreakdash-stratification $\psi \colon \RL \to \ZZ^{n}$, the
monomial $S$-module $M_{\psi}$ is an $S[L]$-submodule of $R$ because the
cell complex structure on the space $\RL$ is $L$-equivariant and the map
$\psi$ commutes with lattice translation.  Although $M_{\psi}$ is almost never
a finitely generated $S$-module, it is a finitely generated $S[L]$-module in
all of our applications.

In contrast, the coefficient ring $S$ is \emph{not} an $S[L]$-submodule of the
Laurent polynomial ring $R$ because it is not closed under multiplication by
a Laurent monomial $\bm{z}^{\bm{v}}$ for an arbitrary lattice point $\bm{v} \in L$.
Nevertheless, the cellular free $S$\nobreakdash-complex $F_{\psi}$ is a
$\ZZ^n$-graded $S[L]$-complex.  For any homological index $i$ and any lattice
point $\bm{v} \in L$, multiplication by $\bm{z}^{\bm{v}} \in S[L]$ permutes
the summands in the direct sum $(F_{\psi})_{i}$ by sending the generator of
the summand $S \, \sigma$ to the generator of $S \, \tau$, where
$\tau \colequal \sigma + \bm{v}$; the $L$-equivariance of the underlying cell
complex guarantees that the lattice translate $\tau$ of the open cell $\sigma$
is also an open cell.  Since $\psi(\tau) = \psi(\sigma) + \bm{v}$, this
permutation preserves the $\ZZ^{n}$-grading.  Given the $S[L]$-module
structures on $M_{\psi}$ and $(F_{\psi})_{0}$, \Cref{r:gens} shows that the
canonical surjection from $(F_{\psi})_{0}$ onto $M_{\psi}$ is automatically an
$S[L]$-module homomorphism.  The category of $\ZZ^n$-graded $S[L]$-modules is
our algebraic analog for the universal cover $\RL$.

On the other hand, the category of $(\ZZ^n \mathbin{\!/\!} L)$-graded
$S$-modules serves as our algebraic analog of the real topological torus
$\RL \mathbin{\!/\!}  L$.  Let
\[
  \eta \colon \ZZ^n \to \ZZ^n \mathbin{\!/\!}  L
\]
be the canonical surjective homomorphism of $\ZZ$-modules that sends the
lattice point $\bm{u} \in \ZZ^n$ to the coset
$\eta(\bm{u}) \colequal \set{\bm{u} + \bm{v} \mathrel{|} \bm{v} \in L}$ in the
quotient group $\ZZ^n \mathbin{\!/\!}  L$. By definition, the
$(\ZZ^n \mathbin{\!/\!}  L)$-grading on the polynomial ring $S$ is the
coarsening of its $\ZZ^n$-grading:
\[
  \deg(\bm{x}^{\bm{u}})
  \colequal \eta(\bm{u}) \in \ZZ^{n} \mathbin{\!/\!} L \, .
\]
Likewise, the $(\ZZ^n \mathbin{\!/\!} L)$-grading on $S[L]$ is given by
$\deg(\bm{x}^{\bm{u}} \, \bm{z}^{\bm{v}}) = \eta(\bm{u} + \bm{v}) =
\eta(\bm{u}) \in \ZZ^{n} \mathbin{\!/\!}  L$. Since the ideal
$\ideal{\bm{z}^{\bm{v}} - 1 \mathrel{|} \bm{v} \in L}$ in $S[L]$ is
homogeneous with respect to the $(\ZZ^n \mathbin{\!/\!}  L)$-grading (but not
the $\ZZ^n$\nobreakdash-grading), the $\kk$-algebra homomorphism
$S[L] \to S[L] \mathbin{\!/\!} \!  \ideal{\bm{z}^{\bm{v}} - 1 \mathrel{|}
  \bm{v} \in L} \cong S$ is homogeneous and has degree $\bm{0}$ in the
$(\ZZ^n \mathbin{\!/\!}  L)$-grading. Hence, the corresponding
extension-of-scalars functor
\[
  M \mapsto M \otimes_{S[L]} S
\]
converts every $\ZZ^n$-graded $S[L]$-module into a $(\ZZ^n \mathbin{\!/\!}
L)$-graded $S$-module.

The image of the cellular free $S[L]$-complex $F_{\psi}$ under this
extension-of-scalars functor has a straightforward description.

\begin{remark}
  \label{r:com}
  For each homological index $i$, the summand of the direct sum $(F_{\psi})_i$
  corresponding to the $i$\nobreakdash-dimensional cell $\sigma \subset \RL$ maps
  to the free $(\ZZ^n \mathbin{\!/\!} L)$-graded $S$-module
  $S \, \sigma' \cong S \bigl( \kern-0.5pt - (\eta \circ \psi)(\sigma)
  \kern-1.0pt \bigr)$; the generator of $S \, \sigma'$ corresponds to the
  coset $\sigma' \colequal \set{ \tau \subset \RL \mathrel{\big|}
    \text{$\tau = \sigma + \bm{v}$ for some $\bm{v} \in L$}}$ in
  $\RL\mathbin{\!/\!} L$ and the degree of this generator is
  $\eta \kern-0.5pt \bigl( \kern-0.5pt \psi(\sigma) \kern-1.0pt \bigr)$.  It
  follows that
  \[
    (F_{\psi})_{i} \otimes_{S[L]} S
    = \bigoplus_{\begin{subarray}{c}
        \sigma' \subset \RL \mathbin{/} L \\
        \dim \sigma' = i
      \end{subarray}}
    S \, \sigma' \, .
  \]
  To describe the differential on the free $S$-complex
  $(F_\psi)_{i} \otimes_{S[L]} S$, choose representatives $\sigma$ and $\tau$ in
  $\RL$ for the cosets $\sigma'$ and $\tau'$ in $\RL \mathbin{\!/\!}  L$ that
  lie in the same fundamental domain. It follows that
  \[
    \partial \sigma'
    = \sum_{\tau' \subset \RL \mathbin{/} L} \left(
      \sum_{\bm{v} \in L} \varepsilon(\sigma, \tau + \bm{v}) \,
      \bm{x}^{\psi(\sigma) - \psi(\tau) - \bm{v}} \right) \, \tau' \, .
  \]
  The inner sum is finite: the closed cell $\overline{\sigma}$ contains only
  finitely many open cells, so $\varepsilon(\sigma, \tau + \bm{v}) \neq 0$ for
  only finitely many lattice points $\bm{v} \in L$. Unlike the $\ZZ^n$-graded
  $S[L]$-complex $F_{\psi}$, the differentials in the
  $(\ZZ^n \mathbin{\!/\!}  L)$-graded $S$-complex $F_{\psi} \otimes_{S[L]} S$
  are not necessarily given by monomial matrices.
\end{remark}

The ensuing corollary gives the desired relationship between our two
categories of modules.

\begin{corollary}[\cite{BS98}*{Theorem~3.2}]
  \label{c:res}
  Let $\psi \colon \RL \to \ZZ^{n}$ be a compatible $\ZZ^{n}$-stratification.
  When the cellular free $S[L]$\nobreakdash-complex $F_{\psi}$ is a
  $\ZZ^n$-graded resolution of the $S[L]$\nobreakdash-module $M_{\psi}$, the
  $S$-complex $F_{\psi} \otimes_{S[L]} S$ is a
  $(\ZZ^n \mathbin{\!/\!} L)$-graded free resolution of the $S$-module
  $M_{\psi} \otimes_{S[L]} S$.
\end{corollary}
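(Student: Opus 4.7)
The plan is to show that $\mathrm{Tor}_{i}^{S[L]}(M_{\psi}, S) = 0$ for all $i > 0$. By hypothesis, the cellular free complex $F_{\psi}$ is a free $S[L]$-resolution of $M_{\psi}$, so the homology of $F_{\psi} \otimes_{S[L]} S$ computes precisely these Tor groups. Their vanishing in positive degrees, combined with right exactness of the tensor product in degree zero, then gives the desired resolution of $M_{\psi} \otimes_{S[L]} S$; that the resulting complex consists of free, $(\ZZ^n \mathbin{\!/\!} L)$-graded $S$-modules is already recorded in \Cref{r:com}.

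The key move is a change of rings from $S[L]$ down to the group algebra $\kk[L]$. Writing $S[L] \cong S \otimes_{\kk} \kk[L]$, one checks that $S \cong S[L] \otimes_{\kk[L]} \kk$, where $\kk$ is viewed as a $\kk[L]$-module via the augmentation $\bm{z}^{\bm{v}} \mapsto 1$. By associativity of tensor product, for every $S[L]$-module $M$ one then has $M \otimes_{S[L]} S \cong M \otimes_{\kk[L]} \kk$. Since a $\kk$-basis of the polynomial ring $S$ furnishes a $\kk[L]$-basis of $S[L]$, every free $S[L]$-module is also free (hence flat) over $\kk[L]$, so $F_{\psi}$ is simultaneously a $\kk[L]$-flat resolution of $M_{\psi}$. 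Putting these identifications together yields $\mathrm{Tor}_{i}^{S[L]}(M_{\psi}, S) \cong \mathrm{Tor}_{i}^{\kk[L]}(M_{\psi}, \kk)$.

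The essential content is then the claim that $M_{\psi}$ itself is a free $\kk[L]$-module, which immediately forces these Tor groups to vanish in positive degrees. By \Cref{d:comp}, the monomial module $M_{\psi}$ sits inside the Laurent polynomial ring $R = \kk[\ZZ^{n}]$, so it has a distinguished $\kk$-basis consisting of the Laurent monomials $\bm{x}^{\bm{a}}$ it contains. Each element $\bm{z}^{\bm{v}} \in \kk[L]$ acts by multiplication by $\bm{x}^{\bm{v}}$ on $R$ and hence sends $\bm{x}^{\bm{a}} \mapsto \bm{x}^{\bm{a}+\bm{v}}$; because $L$ is a subgroup of the torsion-free abelian group $\ZZ^{n}$, this action is free on the set of monomials of $M_{\psi}$. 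Grouping these monomials into $L$-orbits therefore decomposes $M_{\psi}$ as a direct sum of $\kk[L]$-modules, each summand canonically isomorphic to $\kk[L]$.

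The main obstacle is largely bookkeeping: one must keep track of which ring acts on which module and verify that all identifications respect the $\ZZ^{n}$-grading throughout. Once the freeness of $M_{\psi}$ over $\kk[L]$ is recognized, however, no Koszul complex or regular-sequence analysis is needed, and the argument works without any positivity assumption on the lattice $L$.
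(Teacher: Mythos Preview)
Your argument is correct. The paper actually notes that the categorical proofs in \cite{BS98}*{Theorem~3.2} and \cite{MS05}*{Theorem~9.17} carry over verbatim, and then sketches an \emph{alternative} route: for each $\alpha \in \ZZ^{n}\mathbin{\!/\!}L$ and any lift $\bm{u}$, one checks directly that the canonical map $(F_{\psi})_{\bm{u}} \to (F_{\psi}\otimes_{S[L]}S)_{\alpha}$ is an isomorphism of $\kk$-complexes, reducing exactness to the already-known acyclicity of $(\RL)_{\leqslant\bm{u}}$. Your approach is closer in spirit to the original Bayer--Sturmfels argument: you identify $-\otimes_{S[L]}S$ with $-\otimes_{\kk[L]}\kk$, observe that $F_{\psi}$ remains a flat $\kk[L]$-resolution of $M_{\psi}$, and then exploit the fact that the monomial $\kk$-basis of $M_{\psi}$ is freely permuted by $L$ to conclude that $M_{\psi}$ is $\kk[L]$-free, whence $\mathrm{Tor}^{\kk[L]}_{>0}(M_{\psi},\kk)=0$. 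The graded-piece comparison is more hands-on and makes the connection to the topology of $(\RL)_{\leqslant\bm{u}}$ explicit; your Tor-vanishing argument is cleaner homological algebra and makes transparent why no positivity hypothesis on $L$ is needed, though it hides the combinatorics. Either way the essential point---that $L$ acts freely on the monomials of $M_{\psi}$---is the same.
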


The categorical arguments in \cite{BS98}*{Theorem~3.2} and
\cite{MS05}*{Theorem~9.17} carry over essentially verbatim to our setting. We
 now outline an alternative proof that implicitly relates cohomology on the universal
cover to that on the torus.

\begin{proof}[Sketch of proof]
  Fix $\alpha$ in the grading group $\ZZ^n \mathbin{\!/\!} L$.  The degree
  $\alpha$ part of the $S$-module $M \otimes_{S[L]} S$ equals $\kk$ if there
  exists a lattice point $\bm{u} \in \ZZ^{n}$ such that
  $\bm{x}^{\bm{u}} \in M_{\psi}$ and equals $0$ otherwise.  Similarly,
  \Cref{p:res} establishes that the subcomplex $(\RL)_{\leqslant \bm{u}}$ is
  acyclic if $\bm{x}^{\bm{u}} \in M_{\psi}$ and is empty otherwise.  These
  equivalences are independent of the choice of $\bm{u}$ because both
  $M_{\psi}$ and $\RL$ are $L$-equivariant.  It suffices to show that the
  degree $\alpha$ part of the $S$-complex $F_{\psi} \otimes_{S[L]} S$ is
  isomorphic to the degree $\bm{u}$ part of the $S[L]$-complex $F_{\psi}$.
  One verifies that the canonical map from $(F_{\psi})_{\bm{u}}$ to
  $(F_{\psi} \otimes_{S[L]} S)_{\alpha}$ is an isomorphism by describing the
  image and preimage of basis vectors.
\end{proof}

To leverage \Cref{c:res}, another characterization of the $S$-module
$M_{\psi} \otimes_{S[L]} S$ is needed.  The canonical surjective $\ZZ$-module
homomorphism $\eta \colon \ZZ^n \to \ZZ^n \mathbin{\!/\!} L$ induces a
surjective homomorphism of $\kk$\nobreakdash-algebras from the Laurent
polynomial ring $R = \kk[\ZZ^n]$ to the group algebra
$\kk[\ZZ^n \mathbin{\!/\!} L]$: for any lattice point $\bm{u} \in \ZZ^n$, the
monomial $\bm{x}^{\bm{u}} \in R$ maps to the monomial
$\smash{\bm{x}^{\eta(\bm{u})}} \in \kk[\ZZ^n\mathbin{\!/\!} L]$. Expanding on
this identification gives the following description of
$M_{\psi} \otimes_{S[L]} S$.

\begin{proposition}
  \label{p:gens}
  Let $\psi \colon \RL \to \ZZ^{n}$ be a compatible $\ZZ^{n}$-stratification.
  The module $M_\psi \otimes_{S[L]} S$ is the $S$\nobreakdash-submodule of the
  group algebra $\kk[\ZZ^n \mathbin{\!/\!} L]$ generated by
  \[
    \set{\smash{\bm{x}^{\eta \circ \psi (\bm{p})}} \in \kk[\ZZ^n \mathbin{\!/\!} L]
      \mathrel{\big|} \text{the point $\bm{p}$ is a vertex in the cell complex
        $\RL$}} \, .
  \]
\end{proposition}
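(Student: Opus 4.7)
The plan is to apply the extension-of-scalars functor $- \otimes_{S[L]} S$ to the inclusion $M_{\psi} \hookrightarrow R$ and then identify the resulting map concretely. Since $R = \kk[\ZZ^n]$ is an $S[L]$-module via $\bm{x}^{\bm{u}} \bm{z}^{\bm{v}} \mapsto \bm{x}^{\bm{u} + \bm{v}}$, the element $\bm{z}^{\bm{v}} - 1 \in S[L]$ acts on $R$ as multiplication by $\bm{x}^{\bm{v}} - 1$. First, I would observe that the $\kk$-algebra map $R \to \kk[\ZZ^n \mathbin{\!/\!} L]$ sending $\bm{x}^{\bm{u}} \mapsto \bm{x}^{\eta(\bm{u})}$ has kernel $\ideal{\bm{x}^{\bm{v}} - 1 \mathrel{|} \bm{v} \in L}$, which identifies $R \otimes_{S[L]} S$ with $\kk[\ZZ^n \mathbin{\!/\!} L]$ as $(\ZZ^n \mathbin{\!/\!} L)$-graded $S$-algebras. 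Consequently, tensoring the inclusion yields a $(\ZZ^n \mathbin{\!/\!} L)$-graded $S$-linear map
\[
  \mu \colon M_{\psi} \otimes_{S[L]} S \longrightarrow \kk[\ZZ^n \mathbin{\!/\!} L] \, .
\]

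Next, I would describe the image of $\mu$. Remark~\ref{r:gens} shows that $M_{\psi}$ is generated as an $S$-module by $\set{\bm{x}^{\psi(\bm{p})} \mathrel{|} \text{$\bm{p}$ a vertex of $\RL$}}$, so $M_{\psi} \otimes_{S[L]} S$ is generated as an $S$-module by the elements $\bm{x}^{\psi(\bm{p})} \otimes 1$, whose images under $\mu$ are exactly the monomials $\bm{x}^{\eta \circ \psi(\bm{p})}$ appearing in the statement. Hence $\mu$ surjects onto the $S$-submodule described in the proposition, and it suffices to prove that $\mu$ is injective.

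The main obstacle is this injectivity, since tensor product is only right exact. My approach is a graded-piece argument. Fix $\alpha \in \ZZ^n \mathbin{\!/\!} L$; the degree $\alpha$ part of $\kk[\ZZ^n \mathbin{\!/\!} L]$ is one-dimensional with basis $\bm{x}^{\alpha}$. On the source side, $M_{\psi} \otimes_{S[L]} S = M_{\psi} \mathbin{\!/\!} \ideal{\bm{z}^{\bm{v}} - 1 \mathrel{|} \bm{v} \in L} M_{\psi}$, and since $\bm{z}^{\bm{v}} \cdot \bm{x}^{\bm{u}} = \bm{x}^{\bm{u} + \bm{v}}$ inside $R$, any two monomials $\bm{x}^{\bm{u}}, \bm{x}^{\bm{u}'} \in M_{\psi}$ with $\eta(\bm{u}) = \eta(\bm{u}') = \alpha$ are identified in the quotient. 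Thus each degree $\alpha$ component of $M_{\psi} \otimes_{S[L]} S$ is spanned by at most one class $\bm{x}^{\bm{u}} \otimes 1$, which $\mu$ carries to the nonzero element $\bm{x}^{\alpha}$. Therefore $\mu$ restricts to an injection on each graded piece, and is globally injective. Combining injectivity with the image computation above identifies $M_{\psi} \otimes_{S[L]} S$ with the $S$-submodule claimed.

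I expect the only real subtlety to be verifying the one-dimensionality in degree $\alpha$ carefully; the identification $R \otimes_{S[L]} S \cong \kk[\ZZ^n \mathbin{\!/\!} L]$ and the generator computation are essentially formal, and the rest uses only Remark~\ref{r:gens}.
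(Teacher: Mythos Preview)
Your argument is correct and largely parallels the paper's: both identify $R \otimes_{S[L]} S$ with $\kk[\ZZ^n \mathbin{\!/\!} L]$ and then read off the generators via \Cref{r:gens}. The one genuine difference is how you establish that $M_{\psi} \otimes_{S[L]} S$ actually \emph{embeds} in $\kk[\ZZ^n \mathbin{\!/\!} L]$. The paper sidesteps this by citing that the extension-of-scalars functor $- \otimes_{S[L]} S$ is \emph{exact} in this setting (\cite{BS98}*{Theorem~3.2} or \cite{MS05}*{Theorem~9.17}), so the inclusion $M_{\psi} \hookrightarrow R$ is preserved. You instead give a direct, self-contained graded-piece argument: each $(\ZZ^n \mathbin{\!/\!} L)$-degree of $M_{\psi} \otimes_{S[L]} S$ is spanned by a single class $\bm{x}^{\bm{u}} \otimes 1$, and $\mu$ sends this generator to the nonzero monomial $\bm{x}^{\eta(\bm{u})}$, forcing injectivity. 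Your route is slightly more elementary and avoids an external reference, at the cost of a bit more bookkeeping; the paper's route is shorter but imports a nontrivial fact. Both are fine.
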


\begin{proof}
  Since the extension-by-scalars functor is right exact, there exists a
  surjective $S$-module homomorphism from $R \otimes_{S[L]} S$ to
  $\kk[\ZZ^n \mathbin{\!/\!} L] \otimes_{S[L]} S$.  We start by focusing on
  the target module.  Using the $S[L]$-module structure on $R$, the surjective
  $\kk$-algebra homomorphism from $R$ to $\kk[\ZZ^n \mathbin{\!/\!} L]$ endows
  the group algebra with an $S[L]$-module structure.  In particular, for any
  lattice point $\bm{v} \in L$, the product
  $\bm{z}^{\bm{v}} \cdot \bm{x}^{\bm{u}} = \bm{x}^{\bm{u} + \bm{v}} \in R$
  maps to
  $\bm{x}^{\eta(\bm{u} + \bm{v})} = \bm{x}^{\eta(\bm{u})} \in \kk[\ZZ^n
  \mathbin{\!/\!} L]$.  Hence, multiplication by the monomial
  $\bm{z}^{\bm{v}} \in S[L]$ on the group algebra
  $\kk[\ZZ^n \mathbin{\!/\!}  L]$ is trivial.  We deduce that
  $\kk[\ZZ^n \mathbin{\!/\!} L] \otimes_{S[L]} S \cong \kk[\ZZ^n
  \mathbin{\!/\!} L]$.

  Returning to the source module, the ring $S$ becomes an $S[L]$-module via
  $S \cong S[L] \mathbin{\!/\!} \! \ideal{\bm{z}^{\bm{v}} - 1 \mathrel{|}
    \bm{v} \in L}$.  For any lattice point $\bm{u} \in \ZZ^{n}$ and any
  lattice point $\bm{v} \in L$, it follows that
  \[
    \bm{x}^{\bm{u} + \bm{v}} \otimes 1_{S}
    = \bm{x}^{\bm{u}} \, \bm{z}^{\bm{v}} \otimes 1_{S}
    = \bm{x}^{\bm{u}} \otimes 1_{S}
  \]
  in $R \otimes_{S[L]} S$.  Hence, two monomials in $R$ are equivalent in the
  $S$-module $R \otimes_{S[L]} S$ if and only if their exponent vectors belong
  to the same coset in $\ZZ^n \mathbin{\!/\!} L$.  Having identified their
  monomial bases (as $\kk$-vector spaces), we also deduce that $R
  \otimes_{S[L]} S \cong \kk[\ZZ^n \mathbin{\!/\!} L]$.

  Lastly, \Cref{r:gens} shows that the $S$-module $M_{\psi}$ is the submodule
  generated by the monomials $\smash{\bm{x}^{\psi(\bm{p})} \in R}$ for all
  vertices $\bm{p}$ in the cell complex $\RL$. Since the extension-by-scalars
  functor is exact (see \cite{BS98}*{Theorem~3.2} or
  \cite{MS05}*{Theorem~9.17}), it follows that $\smash{M_\psi \otimes_{S[L]}
    S}$ is the submodule generated by the monomials $\smash{\bm{x}^{\eta \circ
      \psi(\bm{p})}} \in \kk[\ZZ^n \mathbin{\!/\!} L]$ for all vertices
  $\bm{p}$ in the cell complex $\RL$.
\end{proof}

To illustrate a cellular free resolution of $M_{\psi} \otimes_{S[L]} S$, we
revisit the earlier toric example.

\begin{example}[Cellular free resolution for an embedding into a toric
  surface]
  \label{e:HC}
  As in Example~\ref{e:recur}, consider the inclusion of the identity point
  $Y$ of the dense torus into the second Hirzebruch surface $X$ with the
  compatible $\ZZ^4$-stratification arising from the ceiling function. Let
  $S \colequal \kk[x_1, x_2, x_3, x_4]$ denote the Cox ring of $X$ and choose
  the isomorphism $\Pic(X) \cong \ZZ^2$, so that the degree of the variable $x_i$
  is identified with the $i$th column in the matrix
  \[
    \begin{bmatrix*}
      1 & -2 & 1 & 0 \\
      0 & 1 & 0 & 1 \\
    \end{bmatrix*} \,.
  \]
  \Cref{f:twi} displays the twists of $S$ corresponding to each open
  cell and the induced maps on the basis.  The (nonminimal) cellular free
  resolution of the $S$-module $M_\psi \otimes_{S[L]} S$ is
  \[
    \begin{matrix}
      S(0,0) \\[-3pt]
      \oplus \\[-3pt]
      S(1,-1) \\
    \end{matrix}
    \xleftarrow{\quad
      \begin{bsmallmatrix}
	x_4^{}  & x_4^{} & -x_2^{} x_3^{} & -x_1^{} x_2^{} & x_3^{} - x_1^{} \\
	-x_3 & -x_1 & 1 & 1 & 0
      \end{bsmallmatrix}
      \quad}
    \begin{matrix}
      S(0,-1)^2 \\[-3pt]
      \oplus \\[-3pt]
      S(1,-1)^2 \\[-3pt]
      \oplus \\[-3pt]
      S(-1,0) \\
    \end{matrix}
    \xleftarrow{\quad
      \begin{bsmallmatrix}
	-x_1 & 1 & 0 \\
	x_3 & -1 & 0 \\
	0 & -x_1 & 1 \\
	0 & x_3 & -1 \\
	-x_4 & 0 & x_2
      \end{bsmallmatrix}
      \quad}
    \begin{matrix}
      S(-1,-1) \\[-3pt]
      \oplus \\[-3pt]
      S(0,-1) \\[-3pt]
      \oplus \\[-3pt]
      S(1,-1) \\
    \end{matrix}
    \gets 0 \, .
    \qedhere
    \]
\end{example}

To further illustrate cellular free resolutions, we present a more substantial
example.

\begin{example}[Cellular free resolution for a diagonal embedding]
  \label{e:diag}
  Consider the second Hirzebruch surface
  $Y \colequal \PP \bigl( \mathcal{O}_{\PP^1} \oplus \mathcal{O}_{\PP^1}(2)
  \kern-1.0pt \bigr)$ with its diagonal embedding
  $\Delta \colon Y \to X \colequal Y \times Y$. Following \Cref{r:toric} or
  \cite{BPS01}*{Equation~3.2}, the lattice arising from $\Delta$ is the
  Lawrence lifting
  \[
    L = \Lambda(M_Y) \colequal
    \smash{\set{(\bm{v}, - \bm{v}) \in \ZZ^4 \times \ZZ^4 \mathrel{\big|}
        \bm{v} \in M_Y \subset \ZZ^{4}}}
    \subset \ZZ^{8} \, .
  \]
  The lattice $L \cong M_{Y}$ has rank $2$, and the matrix of the inclusion
  $\iota \colon \ZZ^2 \cong L \to \ZZ^8$, with respect to the standard bases,
  is
  \[
    \begin{bmatrix*}[r]
       1 &  0 & -1 &  0 & -1 &  0 &  1 & 0 \\[-2pt]
       0 &  1 &  2 & -1 &  0 & -1 & -2 & 1 \\
    \end{bmatrix*}^{\textsf{T}} \, .
  \]
  Let $\iota_\RR \colon \RR^2 \to \RR^8$ be the linear map induced by
  $\iota \colon \ZZ^2 \to \ZZ^8$. The map $\psi \colon \RL \to \ZZ^{8}$
  defined by $\psi(\bm{p}) \colequal \ceil{\bm{p}}$ for all
  $\bm{p} \in \RL \subset \RR^8$ is a compatible
  $\ZZ^8$\nobreakdash-stratification; see \Cref{p:strat}.  For all
  $(q_1, q_2) \in \RR^2$, we have
  \begin{align*}
    \psi \bigl( \iota_{\RR}(q_1,q_2) \bigr)
    &= \ceil{(q_1, q_2, -q_1+2q_2, -q_2, -q_1, -q_2, q_1-2q_2, q_2)} \\
    &= (\ceil{q_1}, \ceil{q_2}, \ceil{-q_1+2q_2}, \ceil{-q_2},
    \ceil{-q_1}, \ceil{-q_2}, \ceil{q_1-2q_2}, \ceil{q_2}) \, .
  \end{align*}
  The Cox ring of $X$ is
  $S \colequal \kk[x_1, x_2, x_3, x_4, y_1, y_2, y_3, y_4]$, where
  $\Pic(X) = \Pic(Y) \oplus \Pic(Y) \cong \ZZ^{2} \oplus \ZZ^{2}$; see
  \Cref{e:HC}.  Hence, the (minimal) cellular free resolution of the
  $S$-module $M_\psi \otimes_{S[L]} S$ is
  \[
    \begin{matrix}
      S(0, 0, \; 0, 0) \\[-3pt]
      \oplus \\[-3pt]
      S(1, -1, \; 1, -1) \\
    \end{matrix}
    \xleftarrow{\quad \partial_1 \quad}
    \begin{matrix}
      S(0, -1, \; 1, -1)^2 \\[-3pt]
      \oplus \\[-3pt]
      S(1, -1, \; 0, -1)^2 \\[-3pt]
      \oplus \\[-3pt]
      S(-1, 0, \; -1, 0) \\
    \end{matrix}
    \xleftarrow{\quad \partial_2 \quad}
    \begin{matrix}
      S(-1, -1, \; 1, -1) \\[-3pt]
      \oplus \\[-3pt]
      S(0, -1, \; 0, -1) \\[-3pt]
      \oplus \\[-3pt]
      S(1, -1, \; -1, -1) \\
    \end{matrix}
    \gets 0 \, ,
  \]
  with%
  \vspace{-1em}
  \[
    \partial_1
    = \begin{bmatrix*}
      x_4 y_2 y_3 & x_4 y_1 y_2 & -x_2 x_3 y_4
      & -x_1 x_2 y_4 & x_3 y_1 - x_1 y_3 \\
      -x_3 & -x_1 & y_3 & y_1 & 0 \\
    \end{bmatrix*}
    \quad \text{and} \quad
    \partial_2
    = \begin{bmatrix*}
      -x_1 & y_1 & 0 \\
      x_3 & -y_3 & 0 \\
      0 & -x_1 & y_1 \\
      0 & x_3 & -y_3 \\
      -x_4 & 0 & x_2y_4
    \end{bmatrix*} \, .
  \]
  Taking associated sheaves, one obtains a locally-free resolution of the
  $\mathcal{O}_{X}$-module $\Delta_{*}^{} \mathcal{O}_{Y}$.
\end{example}

\begin{figure}[t]
  \centering
  \begin{tikzpicture}[xscale=2.0, yscale=2.0]
  \usetikzlibrary{calc, decorations.pathreplacing,shapes.misc, arrows.meta}
  \usetikzlibrary{decorations.pathmorphing}

  \tikzstyle{fuzz}=[red,
  postaction={draw, decorate, decoration={border,
      amplitude=0.75cm,angle=90,segment length=.4cm}}]

  \tikzstyle{fuzzB}=[blue,
  postaction={draw, decorate, decoration={border,
      amplitude=0.75cm,angle=90,segment length=.4cm}}]

  \tikzstyle{fuzzP}=[Thistle,
  postaction={draw, decorate, decoration={border,
      amplitude=0.75cm,angle=90,segment length=.4cm}}]

  \begin{scope}[line width=3pt]
    \coordinate  (origin) at (0,0);
    \coordinate (br) at (4,0);
    \coordinate (tr) at (4,4);
    \coordinate (tl) at (0,4);
    \coordinate (ml) at (0,2);
    \coordinate (mr) at (4,2);

    \coordinate (hoff) at (0.5,0);
    \draw[RedViolet] ($(origin)-(hoff)$) -- ($(br)+(hoff)$);
    \draw[RedViolet] ($(tl)-(hoff)$)--($(tr)+(hoff)$);

    \coordinate (voff) at (0,0.5);
    \draw[Cerulean] ($(origin)-(voff)$) -- ($(tl)+(voff)$);
    \draw[Cerulean] ($(br)-(voff)$) -- ($(tr)+(voff)$);

    \coordinate (diagoff) at (0.5,0.25);
    \draw[LimeGreen] ($(br)-(diagoff)$) -- ($(br)+(diagoff)$);
    \draw[LimeGreen] ($(origin)-(diagoff)$) -- ($(mr)+(diagoff)$);
    \draw[LimeGreen] ($(ml)-(diagoff)$) -- ($(tr)+(diagoff)$);
    \draw[LimeGreen] ($(tl)-(diagoff)$) -- ($(tl)+(diagoff)$);
  \end{scope}

  \begin{scope}[inner sep = 2pt]
    \node[fill=white] (v1) at (0,0) {\footnotesize $S$};
    \node (v1x) at (4,0) {};
    \node (v1y) at (0,4) {};
    \node (v1xy) at (4,4) {};
    \node[fill=white] (v2) at (2,0) {\footnotesize $S(-1,0)$};
    \node (v2y) at (2,4) {};
    \node[fill=white] (v4) at (0,1) {\footnotesize $S(1,-1)$};
    \node (v4x) at (4,1) {};
    \node[fill=white] (v5) at (0,2) {\footnotesize $S(1,-1)$};
    \node (v5x) at (4,2) {};
    \node[fill=white] (v6) at (0,3) {\footnotesize $S(0,-1)$};
    \node (v6x) at (4,3) {};
    \node[fill=white] (v8) at (1.0,3.6) {\footnotesize $S(-1,-1)$};
    \node[fill=white] (v9) at (2,1) {\footnotesize $S(1,-1)$};
    \node[fill=white] (v10) at (2,2) {\footnotesize $S(0,-1)$};
    \node[fill=white] (v11) at (2,3) {\footnotesize $S(0,-1)$};
    \node[fill=white] (v12) at (3.2,0.45) {\footnotesize $S(1,-1)$};
  \end{scope}

  \begin{scope}[inner sep = 1.5pt, arrows = {-Latex[width'=0pt .66, length=8pt]}]
    \draw[very thick] (v2) -- node[midway, fill=white]{\scriptsize $-x_1$} (v1);
    \draw[very thick] (v2) -- node[midway, fill=white]{\scriptsize $x_3$} (v1x);
    \draw[very thick] (v4) -- node[midway, fill=white]{\scriptsize $-x_2x_3$} (v1);
    \draw[very thick] (v4) -- node[midway, fill=white]{\scriptsize $1$} (v5);
    \draw[very thick] (v6) -- node[midway, fill=white]{\scriptsize $-x_3$}(v5);
    \draw[very thick] (v6) -- node[midway, fill=white]{\scriptsize $x_4$}(v1y);
    \draw[very thick] (v9) -- node[midway, fill=white]{\scriptsize $-x_1x_2$}(v1);
    \draw[very thick] (v9) -- node[midway, fill=white]{\scriptsize $1$}(v5x);
    \draw[very thick] (v11) -- node[inner sep = 1pt, midway,fill=white]{\scriptsize
      $-x_1$}(v5);
    \draw[very thick] (v11) -- node[midway, fill=white]{\scriptsize $x_4$}(v1xy);
    \draw[very thick] (v8) -- node[inner sep = 0pt, midway,
    fill=white]{\scriptsize $-x_1$}(v6);
    \draw[very thick] (v8) -- node[inner sep = 0.5pt, midway,
    fill=white]{\scriptsize $-x_4$}(v2y);
    \draw[very thick] (v8) -- node[midway, fill=white]{\scriptsize $x_3$}(v11);
    \draw[very thick] (v12) -- node[inner sep = 0.5pt, midway,
    fill=white]{\scriptsize $x_2$}(v2);
    \draw[very thick] (v12) -- node[midway, fill=white]{\scriptsize $-1$}(v9);
    \draw[very thick] (v12) -- node[midway, fill=white]{\scriptsize $1$}(v4x);
    \draw[very thick] (v10) -- node[midway, fill=white]{\scriptsize $x_3$}(v9);
    \draw[very thick] (v10) -- node[midway, fill=white]{\scriptsize $-x_1$}(v4);
    \draw[very thick] (v10) -- node[midway, fill=white]{\scriptsize $-1$}(v11);
    \draw[very thick] (v10) -- node[midway, fill=white]{\scriptsize $1$}(v6x);
  \end{scope}
\end{tikzpicture}
  \vspace{-0.75em}
  \caption{A diagram encoding the cellular free $S$-complex arising from the
    closed embedding of the identity point into the second Hirzebruch
    surface.}
  \label{f:twi}
\end{figure}

\section{The ceiling stratification}
\label{s:ceil}

\noindent
This section exhibits a combinatorial free resolution for the integral closure
of the comodule of a toric ideal.  More precisely, for any prime lattice ideal
$I_L$ in the polynomial ring $S \colequal \kk[x_1, x_2, \dotsc, x_n]$, we
produce a compatible $\ZZ^n$-stratification $\psi \colon \RL \to \ZZ^m$ such
that the cellular free $S$-complex $F_{\psi} \otimes_{S[L]} S$ is a
$(\ZZ^n \mathbin{\!/\!} L)$-graded resolution of the integral closure of the
$S$-module $S \mathbin{\!/\!} I_{L}$.

The lattice $L$ is still a subgroup of $\ZZ^n$.  We further assume that $L$ is
\emph{saturated}, which means
$L = \set{\bm{u} \in \ZZ^n \mathrel{\big|} \text{$k \, \bm{u} \in L$ for some
    $k \in \NN$}}$.  This saturated hypothesis ensures that the \emph{lattice
  ideal}
\[
  I_{L} \colequal \ideal{\bm{x}^{\bm{u}} - \bm{x}^{\bm{v}} \mathrel{\big|}
    \text{$\bm{u} - \bm{v} \in L$ for all $\bm{u}, \bm{v} \in \NN^{n}$}}
\]
is a prime ideal in the polynomial ring $S$; see \cite{MS05}*{Theorem~7.4 and
  Definition~7.24}.  A prime lattice ideal is also known as a \emph{toric
  ideal}.

In this context, the function on the space $\RL$ that assigns the least
integer greater than or equal to each coordinate deserves a formal name.

\begin{definition}
  \label{d:ceil}
  For any lattice $L \in \ZZ^n$, the \emph{ceiling stratification} is the map
  $\psi \colon \RL \to \ZZ^n$ defined by
  $\psi(\bm{p}) \colequal \ceil{\bm{p}} = (\ceil{p_1}, \ceil{p_2}, \dotsc,
  \ceil{p_n})$ for any point $\bm{p} \in \RL \subseteq \RR^n$.
\end{definition}

\Cref{e:Hceil} presents a ceiling stratification. In the special case of toric
varieties, this stratification appears implicitly in \cite{Bon06} and
explicitly in \cite{HHL24}*{\S2.3}.

The standard cell structure is, in fact, the coarsest cell structure on the
space $\RL$ for which the coordinatewise ceiling function is constant on open
cells.

\begin{proposition}
  \label{p:strat}
  Let $\psi \colon \RL \to \ZZ^n$ be the ceiling stratification for a lattice
  $L \subseteq \ZZ^n$. When $\RL$ has its standard cell structure, the map
  $\psi$ is a compatible $\ZZ^n$-stratification, and the cellular free
  $S$-complex $F_{\psi}$ is a $\ZZ^n$-graded free resolution of the
  $\ZZ^n$-graded $S$-module $M_{\psi}$.
\end{proposition}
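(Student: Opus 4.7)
The plan is to verify directly the three conditions of \Cref{d:strat} for the ceiling map, and then invoke \Cref{p:res} by observing that each sublevel set is a convex polyhedron.

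First I would check that $\psi(\bm{p}) \colequal \ceil{\bm{p}}$ is constant on the relative interior of every face of the standard cell structure. A relatively open face of the arrangement in \Cref{d:std} is cut out by specifying, for each coordinate $i$, either that $p_i$ equals a fixed integer $k_i$ or that $p_i$ lies strictly between two consecutive integers $k_i - 1 < p_i < k_i$; in both cases $\ceil{p_i} = k_i$ is constant on the face, so $\psi$ is constant on every open cell. The lattice-equivariance $\psi(\bm{p} + \bm{v}) = \psi(\bm{p}) + \bm{v}$ for $\bm{v} \in L \subseteq \ZZ^{n}$ is immediate from $\ceil{p_i + v_i} = \ceil{p_i} + v_i$ when $v_i \in \ZZ$. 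Finally, for Alexandrov continuity I use that, for each $\bm{u} \in \ZZ^{n}$, the coordinatewise inequality $\ceil{p_i} \leqslant u_i$ is equivalent to $p_i \leqslant u_i$, so
\[
  (\RL)_{\leqslant \bm{u}}
  = \set{\bm{p} \in \RL \mathrel{\big|} p_i \leqslant u_i
    \text{ for all } i = 1, 2, \dotsc, n}
\]
is the intersection of the linear subspace $\RL \subseteq \RR^{n}$ with finitely many closed half-spaces. This is a closed subset, so $\psi$ is continuous into $\ZZ^{n}$ with its Alexandrov topology. Together, these three points show that $\psi$ is a compatible $\ZZ^{n}$-stratification.

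For the resolution statement, I would apply \Cref{p:res}: it suffices to prove that, for each $\bm{u} \in \ZZ^{n}$, the closed subset $(\RL)_{\leqslant \bm{u}}$ is acyclic. But the description above shows that $(\RL)_{\leqslant \bm{u}}$ is a convex polyhedron, since it is an intersection of a linear subspace with closed half-spaces. Hence it is either empty or contractible, so all of its reduced singular homology groups vanish. In particular $H_{i} \bigl( (\RL)_{\leqslant \bm{u}}; \kk \bigr) = 0$ for $i > 0$, while $H_{0} \bigl( (\RL)_{\leqslant \bm{u}}; \kk \bigr) = \kk$ exactly when $(\RL)_{\leqslant \bm{u}}$ is nonempty, which by \Cref{r:gens} matches the condition $\bm{x}^{\bm{u}} \in M_{\psi}$. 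Invoking \Cref{p:res} completes the proof.

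There is no genuine obstacle: the only thing to double-check is the compatibility between the Euclidean sublevel-set description and the monomial-module membership condition, which is handled by the integrality of $\bm{u}$ ensuring that $\ceil{p_i} \leqslant u_i$ and $p_i \leqslant u_i$ coincide.
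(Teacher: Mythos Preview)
Your proof is correct and follows essentially the same approach as the paper: verify lattice-equivariance via $\ceil{p+v}=\ceil{p}+v$, identify $(\RL)_{\leqslant \bm{u}}$ with the intersection of $\RL$ and a closed orthant translated to $\bm{u}$, and invoke \Cref{p:res} because this polyhedron is contractible or empty. Your explicit verification that $\psi$ is constant on open cells is slightly more detailed than the paper's, which deduces it from the closedness of the sublevel sets, but this is only a presentational difference.
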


\begin{proof}
  For each real number $p$ and integer $v$, the ceiling function satisfies
  $\ceil{p+v} = \ceil{p} + v$. It follows that, for all points $\bm{p}$ in
  $\RL$ and all lattice points $\bm{v} \in L$, we have
  \[
    \psi(\bm{p} + \bm{v})
    = \ceil{\bm{p} + \bm{v}}
    = \ceil{\bm{p}} + \bm{v}
    = \psi(\bm{p}) + \bm{v}
    \, .
  \]
  Thus, the ceiling stratification is compatible with lattice translation.

  The set $\ZZ^{n}$ is a poset under the componentwise order. Hence, for any
  lattice point $\bm{u} \in \ZZ^n$, the subset
  $(\RL)_{\leqslant \bm{u}} \colequal \set{\bm{p} \in \RL \mathrel{\big|}
    \psi(\bm{p}) = \ceil{\bm{p}} \leqslant \bm{u}}$ is the intersection of
  $\RL$ with $\bm{u} + (\RR_{\leqslant 0})^n$.  Because the space $\RL$ has
  the standard cell structure, the subset $(\RL)_{\leqslant \bm{u}}$ is
  closed.  It follows that the ceiling stratification is a continuous map that
  is constant on open cells.  The subset $(\RL)_{\leqslant \bm{u}}$ is a
  polyhedron and, thereby, contractible or empty.  Hence, \Cref{p:res}
  establishes that the cellular free $S$-complex $F_\psi$ is a resolution of
  the $S$-module $M_\psi$.
\end{proof}

For a ceiling stratification, the monomial $S$-module
$M_{\psi} \otimes_{S[L]} S$ has an appealing reinterpretation.

\begin{proposition}
  \label{p:nor}
  For the ceiling stratification $\psi \colon \RL \to \ZZ^n$ of a saturated
  lattice $L \subseteq \ZZ^n$, the $S$-module $M_{\psi} \otimes_{S[L]} S$ is the
  integral closure of the $S$-module $S \mathbin{\!/\!} I_{L}$.
\end{proposition}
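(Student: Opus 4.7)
My plan is to compute $M_{\psi} \otimes_{S[L]} S$ explicitly as a $\kk$-subspace of the group algebra $\kk[\ZZ^{n} \mathbin{\!/\!} L]$, then identify it with the integral closure of $S \mathbin{\!/\!} I_{L}$ via Hochster's description of the normalization of an affine semigroup ring. The main obstacle is showing that the cosets $\eta(\bm{u})$ arising from $\bm{u} \in \ZZ^{n} \cap (\RL + \RR_{\geqslant 0}^{n})$ fill out precisely the integer points of the rational cone generated by $\eta(\bm{e}_{1}), \eta(\bm{e}_{2}), \dotsc, \eta(\bm{e}_{n})$ inside $\ZZ^{n} \mathbin{\!/\!} L$; this relies on the saturated hypothesis, which guarantees that $\ZZ^{n} \mathbin{\!/\!} L$ is torsion-free.

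First, I would describe $M_{\psi}$ concretely. By \Cref{d:comp}, a Laurent monomial $\bm{x}^{\bm{u}}$ with $\bm{u} \in \ZZ^{n}$ lies in $M_{\psi} \subseteq R$ exactly when $\bm{u} = \bm{r} + \ceil{\bm{p}}$ for some $\bm{r} \in \NN^{n}$ and some $\bm{p} \in \RL$. I claim this is equivalent to $\bm{u} \in \ZZ^{n} \cap (\RL + \RR_{\geqslant 0}^{n})$. The forward direction is immediate because $\bm{r} + \ceil{\bm{p}} \geqslant \bm{p}$. Conversely, if $\bm{u} \in \ZZ^{n}$ with $\bm{u} \geqslant \bm{p}$ for some $\bm{p} \in \RL$, then the integrality of $\bm{u}$ forces $\bm{u} \geqslant \ceil{\bm{p}} = \psi(\bm{p})$, whence $\bm{x}^{\bm{u}} \in S \cdot \bm{x}^{\psi(\bm{p})} \subseteq M_{\psi}$.

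Next, I would pass to $M_{\psi} \otimes_{S[L]} S$. Mimicking the computation at the end of the proof of \Cref{p:gens}, two monomials of $R$ have the same image in $M_{\psi} \otimes_{S[L]} S \subseteq \kk[\ZZ^{n} \mathbin{\!/\!} L]$ precisely when their exponent vectors differ by an element of $L$. Hence $M_{\psi} \otimes_{S[L]} S$ has a $\kk$-basis indexed by the cosets $\eta(\bm{u})$ for $\bm{u} \in \ZZ^{n} \cap (\RL + \RR_{\geqslant 0}^{n})$. A coset $\alpha \in \ZZ^{n} \mathbin{\!/\!} L$ belongs to this set if and only if some representative lies in $\RL + \RR_{\geqslant 0}^{n}$, equivalently $\alpha$ lies in the image of $\RR_{\geqslant 0}^{n}$ under the induced surjection $\RR^{n} \to \RR^{n} \mathbin{\!/\!} \RL$. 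That image is the rational cone $\operatorname{cone} \bigl( \eta(\bm{e}_{1}), \eta(\bm{e}_{2}), \dotsc, \eta(\bm{e}_{n}) \bigr)$, so $M_{\psi} \otimes_{S[L]} S$ is the $\kk$-subspace of $\kk[\ZZ^{n} \mathbin{\!/\!} L]$ spanned by the monomials supported on the integer points of this cone.

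Finally, I would invoke Hochster's theorem. The kernel of the composition $S \hookrightarrow R \twoheadrightarrow \kk[\ZZ^{n} \mathbin{\!/\!} L]$ is $I_{L}$, which gives an isomorphism $S \mathbin{\!/\!} I_{L} \cong \kk[\eta(\NN^{n})]$. Since $\eta(\NN^{n})$ generates the torsion-free abelian group $\ZZ^{n} \mathbin{\!/\!} L$, the Laurent ring $\kk[\ZZ^{n} \mathbin{\!/\!} L]$ is a localization of $\kk[\eta(\NN^{n})]$ and the two rings share a common fraction field. By \cite{Hoc72}*{Theorem~1}, the integral closure of $\kk[\eta(\NN^{n})]$ in this field is the saturated semigroup ring $\kk[(\ZZ^{n} \mathbin{\!/\!} L) \cap \operatorname{cone}(\eta(\NN^{n}))]$, which coincides with the description of $M_{\psi} \otimes_{S[L]} S$ from the previous step; since both are subrings of $\kk[\ZZ^{n} \mathbin{\!/\!} L]$, this equality also respects the $S$-module structure.
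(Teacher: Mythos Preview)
Your argument is correct and follows the same overall strategy as the paper: identify $S \mathbin{\!/\!} I_L$ with the semigroup ring $\kk[A]$ for $A = \eta(\NN^n)$, invoke the fact that its integral closure is $\kk[\overline{A}]$ with $\overline{A} = (\ZZ^n \mathbin{\!/\!} L) \cap \RR_{\geqslant 0} A$, and then match this with $M_\psi \otimes_{S[L]} S$ inside $\kk[\ZZ^n \mathbin{\!/\!} L]$. The execution differs in a useful way. The paper works at the level of generators, showing that $\overline{A}$ is generated over $A$ by the elements $\eta(\ceil{\bm{p}})$ as $\bm{p}$ ranges over the vertices of $\RL$, and then invokes \Cref{p:gens}; this keeps the cell structure in view but requires the extra step of reducing an arbitrary $\bm{p} \in \RL$ to a vertex. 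You instead characterize the entire monomial support of $M_\psi$ in one stroke as $\ZZ^n \cap (\RL + \RR_{\geqslant 0}^n)$ and project directly, which bypasses the vertex bookkeeping and makes the identification with the saturated cone immediate. One small bibliographic point: \cite{Hoc72}*{Theorem~1} is the Cohen--Macaulayness statement, not the normalization statement you actually use; the paper cites \cite{MS05}*{Theorem~7.25} for the latter.
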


\begin{proof}
  Consider the quotient semigroup
  $A \colequal \NN^n \mathbin{\!/\!} \mathbin{\sim_L}$, where the equivalence
  relation $\mathbin{\sim_L}$ on $\NN^n$ is defined by
  $\bm{u} \mathbin{\sim_{L}} \bm{v}$ if and only if $\bm{u} - \bm{v} \in
  L$. The canonical surjection $\eta \colon \ZZ^n \to \ZZ^n \mathbin{\!/\!} L$
  induces a surjective $\kk$-algebra map $S \to \kk[A]$ with kernel $I_L$, so
  $S \mathbin{\!/\!} I_L \cong \kk[A]$; see \cite{MS05}*{Theorem~7.3}. The
  integral closure of $\kk[A]$ is generated, as a $\kk$-algebra, by the
  saturation $\overline{A}$; see \cite{MS05}*{Theorem~7.25}.  We claim that
  $\overline{A}$ is generated over $A$ by the elements $\eta(\ceil{\bm{p}})$,
  where $\bm{p}$ runs over the vertices of $\RL$.

  To see this, let $\bm{p} \in \RL$ be a vertex. We first establish that
  $\eta(\ceil{\bm{p}}) \in \overline{A} = (\ZZ^n \mathbin{\!/\!} L) \cap
  (\RR_{\geqslant 0} \, A)$. Since the target of $\eta$ is
  $\ZZ^n \mathbin{\!/\!}  L$, it suffices to prove that
  $\eta(\ceil{\bm{p}}) \in (\RR_{\geqslant 0} \, A)$. Consider the
  $\RR$-linear extension
  $\eta_{\RR} \colequal \eta \otimes_\ZZ \id_\RR \colon \ZZ^n \otimes_\ZZ \RR
  \to (\ZZ^n \mathbin{\!/\!} L) \otimes_\ZZ \RR$.  Since $\bm{p} \in \RL$, we
  see that $\eta_{\RR}(\bm{p}) = \bm{0}$.  Moreover, the difference
  $\ceil{\bm{p}} - \bm{p}$ has nonnegative entries, so
  $\eta_{\RR}(\ceil{\bm{p}} - \bm{p}) \in (\RR_{\geqslant 0} \, A)$. Thus, we
  obtain
  \begin{align*}
    \eta(\ceil{\bm{p}})
    = \eta(\ceil{\bm{p}}) - \eta_\RR(\bm{p})
    = \eta_{\RR}(\ceil{\bm{p}} - \bm{p})
    \in (\RR_{\geqslant 0} \, A) \, .
  \end{align*}

  We next establish that the inclusion
  $\overline{A} \subseteq A + \smash{\set{\eta(\ceil{\bm{p}}) \mathrel{\big|}
      \text{the point $\bm{p}$ is a vertex in $\RL$}}}$. Let
  $\bm{a} \in \overline{A}$. By construction, there exist
  $\bm{q} \in \RR_{\geqslant 0}^n$ and $\bm{u} \in \ZZ^n$ such that
  $\bm{a} = \eta_{\RR}(\bm{q}) = \eta(\bm{u})$.  Set
  $\bm{b} \colequal \eta(\floor{\bm{q}}) \in A$ and
  $\bm{p} \colequal \bm{u} - \bm{q}$.  Since the kernel of $\RR$-linear map
  $\eta_\RR$ is $\RL$ and
  \[
    \eta_\RR(\bm{p})
    = \eta_\RR(\bm{u} - \bm{q})
    = \eta(\bm{u}) - \eta_\RR(\bm{q})
    = \bm{a} - \bm{a}
    = \bm{0} \, ,
  \]
  it follows that $\bm{p} \in \RL$ and
  $\eta(\ceil{\bm{p}}) = \eta( \ceil{\bm{u} - \bm{q}}) = \eta(\bm{u}) + \eta(
  \ceil{- \bm{q}}) = \eta(\bm{u}) - \eta( \floor{\bm{q}}) = \bm{a} - \bm{b}$.
  If $\bm{p} \in \RL$ is not a vertex, then it is contained in the relative
  interior of a cell of $\RL$.  In particular, there exists a
  minimal-dimensional integral unit parallelepiped $P$ in $\RR^n$ containing
  $\bm{p}$ such that $P \cap \RL$ coincides with the minimal-dimensional closed
  cell containing $\bm{p}$. For any vertex $\bm{p}'$ with
  $\bm{p}' \in \RL \cap P$, we have
  $\ceil{\bm{p}} - \ceil{\bm{p}'} \in \NN^n$. It follows that
  $\bm{c} \colequal \eta(\ceil{\bm{p}} - \ceil{\smash{\bm{p}'}})$ is in $A$
  and
  \[
    \eta(\ceil{\smash{\bm{p}'}})
    = \eta(\ceil{\bm{p}}) - \bm{c}
    = \bm{a} - \bm{b} - \bm{c} \, .
  \]
  We deduce that $\bm{a} = \bm{b} + \bm{c} + \eta(\ceil{\bm{p}'})$, where
  $\bm{b} + \bm{c} \in A$ and $\bm{p}'$ is a vertex in $\RL$.

  Finally, Proposition~\ref{p:gens} establishes that the $S$-module
  $\smash{M_\psi \otimes_{S[L]} S}$ is generated in $\kk[\ZZ^n/L]$ by the
  monomials $\bm{x}^{\eta \circ \psi(\bm{p})} = \bm{x}^{\eta(\ceil{\bm{p}})}$,
  where $\bm{p}$ runs over the vertices of $\RL$. Since saturation of a
  semigroup corresponds to integral closure of its semigroup ring, we
  conclude that
  \[
    M_\psi \otimes_{S[L]} S
    \cong \kk[\overline{A}]
    = \overline{\kk[A]} \cong \overline{S/I_L}
    \, . \qedhere
  \]
\end{proof}

Combined with \Cref{c:res}, this leads to the desired cellular free resolution
for $\overline{S/I_L}$.

\begin{theorem}
  \label{t:resSat}
  For the ceiling stratification $\psi \colon \RL \to \ZZ^n$ of a saturated
  lattice $L \subseteq \ZZ^n$, the cellular free $S$-complex
  $F_{\psi} \otimes_{S[L]} S$ is a resolution of the integral closure of the
  $S$-module $S \mathbin{\!/\!} I_{L}$.
\end{theorem}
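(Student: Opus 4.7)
The plan is to chain together the three main results already established in this section; the theorem will reduce to their composition.

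First, I would invoke \Cref{p:strat}: under the standard cell structure on $\RL$, the ceiling stratification $\psi$ is a compatible $\ZZ^n$-stratification, and the cellular free $S$-complex $F_\psi$ is a $\ZZ^n$-graded free resolution of the monomial $S$-module $M_\psi$. As discussed just before \Cref{c:res}, the $L$-equivariance of the standard cell structure together with the identity $\psi(\bm{p} + \bm{v}) = \psi(\bm{p}) + \bm{v}$ upgrades $F_\psi$ to a $\ZZ^n$-graded $S[L]$-complex and makes the surjection $(F_\psi)_0 \twoheadrightarrow M_\psi$ an $S[L]$-module homomorphism, so $F_\psi$ is in fact a $\ZZ^n$-graded free resolution of $M_\psi$ over $S[L]$.

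Next, I would apply \Cref{c:res} to this $S[L]$-resolution. Its conclusion is that the $S$-complex $F_\psi \otimes_{S[L]} S$ is a $(\ZZ^n \mathbin{\!/\!} L)$-graded free resolution of the $S$-module $M_\psi \otimes_{S[L]} S$.

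Finally, I would invoke \Cref{p:nor}, which uses the saturated hypothesis on $L$ essentially, to identify $M_\psi \otimes_{S[L]} S$ with the integral closure $\overline{S \mathbin{\!/\!} I_L}$; substituting this identification into the conclusion of the previous step yields the theorem. No step presents a genuine obstacle, since the substantive work---compatibility of the ceiling stratification, acyclicity of the sublevel sets $(\RL)_{\leqslant \bm{u}}$, the right exactness of the extension-of-scalars functor, and the semigroup-theoretic matching of $\overline{A}$ with the vertex generators $\eta(\ceil{\bm{p}})$---has already been absorbed into \Cref{p:strat}, \Cref{c:res}, and \Cref{p:nor}. The theorem is precisely their composition.
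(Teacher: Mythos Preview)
Your proposal is correct and matches the paper's own proof essentially verbatim: the paper combines \Cref{p:strat} with \Cref{c:res} to see that $F_{\psi} \otimes_{S[L]} S$ resolves $M_{\psi} \otimes_{S[L]} S$, and then invokes \Cref{p:nor} to identify this module with $\overline{S \mathbin{\!/\!} I_{L}}$. Your added remarks about the $S[L]$-structure are accurate and already implicit in the discussion preceding \Cref{c:res}.
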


\begin{proof}
  \Cref{c:res} and \Cref{p:strat} establish that the cellular free $S$-complex
  $F_{\psi} \otimes_{S[L]} S$ is a $(\ZZ^n \mathbin{\!/\!}  L)$-graded
  resolution of the $S$-module $M_\psi \otimes_{S[L]} S$, and \Cref{p:nor}
  shows that the $S$-module $M_\psi \otimes_{S[L]} S$ is isomorphic to the
  integral closure of the $S$-module $S \mathbin{\!/\!}  I_{L}$.
\end{proof}

\begin{proof}[Proof of \Cref{t:main}]
  This result is an informal version of \Cref{t:resSat}.
\end{proof}

\begin{example}[Integral closure of a concrete toric ideal]
  As in \Cref{e:diag}, consider the diagonal embedding of the second
  Hirzebruch surface and the compatible stratification
  $\psi \colon \RL \to \ZZ^{8}$ defined by
  $\psi(\bm{p}) = \ceil{\bm{p}}$.  The corresponding toric
  ideal in $S \colequal \kk[x_1, x_2, x_3, x_4, y_1, y_2, y_3, y_4]$ is
  \[
    I_L
    = \ideal{x_3^{} y_1^{} - x_1^{} y_3^{}, \;
      x_4^{}y_2^{}y_3^2 - x_2^{} x_3^2 y_4^{}, \;
      x_4^{} y_1^{} y_2^{} y_3^{} - x_1^{} x_2^{} x_3^{} y_4^{}, \;
      x_4^{} y_1^2 y_2^{} - x_1^2 x_2^{} y_4^{}} \, .
  \]
  The element $x_2^{} x_3^{} y_3^{-1} y_4^{}$ of the fraction field of $S/I_L$
  is a root of the monic polynomial $w^2 - x_2 x_4 y_2 y_4$ and, thus,
  contained in the integral closure.  This element corresponds to the
  half-integer vertex:
  \[
    \psi \bigl( \iota_{\RR} (0, 0.5) \kern-1.0pt \bigr)
    = \ceil{( 0, 0.5, 1, - 0.5, 0, - 0.5, -1, 0.5)}
    = (0, 1, 1, 0, 0, 0, -1, 1) \, .  \qedhere
  \]
\end{example}

\begin{remark}[Unimodular case]
  When the lattice $L \subseteq \ZZ^m$ is unimodular, the ceiling
  stratification on the Lawrence lift $\Lambda(L) \subseteq \ZZ^{2m}$ of $L$
  agrees with the labeling in \cite{BPS01}*{\S3}.  Hence, \Cref{t:resSat}
  recovers, as a special case, the resolution of the diagonal via the Lawrence
  ideal for a unimodular toric variety in \cite{BPS01}*{\S6}.
\end{remark}

\begin{remark}[Eliminating the saturated assumption]
  \label{r:nonSatDetails}
  While the lattice $L$ is assumed to be saturated in \Cref{t:resSat}, the
  conclusion of the theorem remains valid even without this assumption.
  However, some care is required in formulating the correct generalization.
  When $L$ is not saturated, the quotient ring $S \mathbin{\!/\!}  I_{L}$
  need not be a domain. Consequently, the integral closure of
  $S \mathbin{\!/\!}  I_{L}$ should be taken within its total ring of
  fractions.

  When $L$ is not saturated, the quotient semigroup $A \colequal \NN^n
  \mathbin{\!/\!} \mathbin{\sim_L}$ is not affine, so one must refine the
  notion of saturation.  The group completion $\ZZ^n \mathbin{\!/\!} L$ of $A$
  is a finitely generated abelian group, decomposing as the direct sum of a
  free abelian group and a torsion group: $\ZZ^n \mathbin{\!/\!} L \cong
  \ZZ^{r} \oplus G_{\text{tor}}$.  Projecting onto the torsion-free quotient,
  let $A'$ denote the image of $A$ in $\ZZ^{r}$.  In this setting, the
  saturation of $A$ is $\overline{A'} \oplus G_{\text{tor}}$.  With these
  definitions, one verifies that the integral closure of the semigroup ring
  coincides the semigroup ring of its saturation. Moreover, the monomials
  corresponding to $\eta(\ceil{\bm{p}}) \in \ZZ^n/L$, where $\bm{p}$ runs over
  the vertices of $\RL$, still generate the saturation.
\end{remark}

\section{Applications of the ceiling stratification}
\label{s:app}

\noindent
In this section, we use ceiling stratifications to describe resolutions of
sheaves on a smooth toric variety. We relate the cellular free resolutions in
\Cref{t:resSat} to the locally-free resolutions in \cite{HHL24}*{Theorem~A}
and the minimal free resolutions in \cite{BE24}*{Theorem~1.2}. In particular,
we show that, for a smooth toric variety, the resolutions of the diagonal from
\cite{HHL24} and \cite{BE24} coincide with the $\mathcal{O}_{X}$-complex
associated to a cellular free resolution.

Throughout, the morphism $\varphi \colon Y \to X$ is a torus-equivariant
embedding of a normal toric variety $Y$ into a smooth toric variety $X$ with
no torus factors. The polynomial ring $S \colequal \kk[x_1, x_2, \dotsc, x_n]$
is the Cox ring of $X$. Following \Cref{r:toric}, the lattice arising from the
map $\varphi$ is $L \colequal \Ker(\overline{\varphi}^{*}) \subseteq \ZZ^{n}$.

\subsection*{Recovering Hanlon--Hicks--Lazarev resolutions}

Given $\varphi \colon Y \to X$, \cite{HHL24}*{Theorem~3.5} produces a
combinatorial resolution of the sheaf $\varphi_{*} \mathcal{O}_{Y}$ in terms
of direct sums of line bundles on $X$.  As explained in \cite{HHL24}*{\S3.4},
the relevant chain complex of sheaves is obtained from a cell structure (also
referred to as topological `stratification') on the real torus corresponding
to an `exit path category.'  Moreover, \cite{HHL24}*{Proposition~3.9}
introduces a functor from this exit path category to the category of coherent
sheaves on $X$ that assigns a line bundle to each open cell.

The subsequent lemma aligns the ceiling stratification in \Cref{p:strat} with
the categorical construction in \cite{HHL24}*{\S3}. As in \Cref{r:toric}, we
identify $\ZZ^n$ with the group of torus-invariant divisors on $X$. For any
lattice point $\bm{u} \in \ZZ^n$, we write $\mathcal{O}_{X}(\bm{u})$ for the
corresponding line bundle.

\begin{lemma}
  \label{l:hhl}
  The ceiling stratification $\psi \colon \RL \to \ZZ^n$ and the construction
  in \cite{HHL24}*{\S3.4} agree: the induced cell structures on the real torus
  $\RL \mathbin{\!/\!} L$ coincide, and the line bundle assigned to the cell
  $\sigma' \subset \RL \mathbin{\!/\!} L$ is
  $\mathcal{O}_{X} \bigl( - \psi(\sigma) \kern-1.0pt \bigr)$, where the cell
  $\sigma \subset \RL$ represents the coset $\sigma'$.
\end{lemma}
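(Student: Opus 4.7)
The claim has two components: the underlying cell structures on $\RL \mathbin{\!/\!} L$ coincide, and the line-bundle labels match. I would handle these separately and in that order, since the first is purely topological while the second then reduces to a matching of explicit formulas.

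For the cell structures, the plan is to show that both arise from the same periodic hyperplane arrangement on $\RL$ in the sense of \Cref{r:cell}. By \Cref{d:std}, our cell structure on $\RL$ is cut out by the hyperplanes $\set{\bm{p} \in \RL \mathrel{\big|} p_{i} = j}$ for $1 \leqslant i \leqslant n$ and $j \in \ZZ$; these are the loci where some coordinate of $\ceil{\bm{p}}$ jumps. Descending by $\pi$ produces the toric arrangement on $\RL \mathbin{\!/\!} L$. In \cite{HHL24}*{\S3.4}, the stratification on $\RL \mathbin{\!/\!} L$ is obtained by pulling back the union of the toric boundary divisors of $X$ along the map induced by $\overline{\varphi}$. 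The primitive generators of the rays of $X$ are exactly the rows of the matrix $\nu$ from \Cref{r:toric}, so after composing with $\iota$ the preimages of these boundary hypersurfaces are precisely the hyperplanes $\set{p_{i} \in \ZZ}$ inside $\RL$. Hence the two arrangements, and therefore the two cell structures, agree.

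For the line bundle assignment, the plan is to unpack the functor in \cite{HHL24}*{Proposition~3.9} on a single open cell $\sigma' \subset \RL \mathbin{\!/\!} L$ and compare. Choose any representative $\sigma \subset \RL$ of $\sigma'$; for any point $\bm{p} \in \sigma$, the componentwise ceiling $\psi(\sigma) = \ceil{\bm{p}}$ is constant on $\sigma$. On our side, the label is $\mathcal{O}_{X} \bigl( \kern-0.5pt - \psi(\sigma) \kern-1.0pt \bigr)$. On the Hanlon--Hicks--Lazarev side, the functor sends $\sigma'$ to the line bundle associated to the divisor $-\sum_{i=1}^{n} \ceil{p_{i}} D_{i}$, where $D_{1}, D_{2}, \dotsc, D_{n}$ are the torus-invariant prime divisors indexed by the rays of $X$; this is the assignment forced by how their exit-path functor records the integer shift needed to land inside the fundamental domain of each coordinate $\ZZ$-translation. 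Under the identification of $\ZZ^{n}$ with the group of torus-invariant divisors on $X$, this divisor is exactly $-\psi(\sigma)$, so the two line bundles coincide.

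The main obstacle, and the place where I would spend the most care, is bookkeeping the conventions used in \cite{HHL24}*{\S3.4}: the direction of the lattice translation, the sign in the exit-path functor, and the $L$-equivariance needed to show that the label depends only on the coset $\sigma'$. I would verify $L$-equivariance by observing that translating $\sigma$ by $\bm{v} \in L$ sends $\ceil{\bm{p}}$ to $\ceil{\bm{p}} + \bm{v}$ and $\bm{v}$ lies in the kernel of $\ZZ^{n} \twoheadrightarrow \Pic(X)$ from \Cref{r:toric}, so the associated line bundle $\mathcal{O}_{X}(\cdot)$ is unchanged. After these conventions are reconciled, the lemma follows immediately from the two comparisons above.
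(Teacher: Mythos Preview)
Your proposal is correct and follows essentially the same approach as the paper: both arguments identify the two cell structures by tracing them back to the same periodic arrangement $\set{p_i \in \ZZ}$ on $\RL$, and then match the line-bundle labels by unwinding the explicit formula in \cite{HHL24}. The paper is slightly more concrete in that it pins down the label comparison by citing \cite{HHL24}*{Equation~(6) in \S2.3}, which gives $\mathcal{O}_{X}\bigl((\floor{-\bm{u}_1 \cdot \bm{q}}, \dotsc, \floor{-\bm{u}_n \cdot \bm{q}})\bigr) = \mathcal{O}_{X}\bigl(-\ceil{\nu_{\RR}(\bm{q})}\bigr)$, whereas your description of the exit-path functor's assignment is more heuristic; your added check that the label depends only on the coset $\sigma'$ via $\Ker(\ZZ^n \to \Pic(X))$ is a useful point the paper leaves implicit.
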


\begin{proof}
  As in \Cref{r:toric}, the toric morphism $\varphi \colon Y \to X$
  corresponds to a surjective $\ZZ$-linear map
  $\overline{\varphi}^{*} \colon M_X \to M_Y$ between the character lattices
  of $X$ and $Y$, and $L \colequal \Ker(\overline{\varphi}^{*})$.  In
  \cite{HHL24}*{\S3.4}, the real torus is defined to be the kernel of the
  induced map
  $(M_X \otimes_{\ZZ} \RR) \mathbin{\!/\!} M_X \to (M_Y \otimes_{\ZZ} \RR)
  \mathbin{\!/\!} M_Y$ on tori.  Hence, this real torus is precisely the real
  topological torus $\RL \mathbin{\!/\!} L$.

  We next analyze the cell structures on $\RL \mathbin{\!/\!}  L$. Following
  \Cref{r:toric} or \Cref{p:strat}, the space $\RL$ carries the standard cell
  structure. The inclusion $\nu \colon M_X \to \ZZ^n$ sends a character
  $\bm{m} \in M_{X}$ to the lattice point in $\ZZ^n$ whose $i$th entry is
  $\bm{u}_{i} \cdot \bm{m}$, where $\bm{u}_{i}$ is the primitive lattice point
  generating the $i$th ray in the fan of $X$; see
  \cite{CLS11}*{Proposition~4.1.2}.  The composition of the canonical
  inclusion $\kappa \colon L \to M_X$ with $\nu$ realizes the lattice $L$ as a
  subgroup of $\ZZ^n$. By \Cref{d:std}, the standard cell structure on $\RL$
  is determined by restricting the periodic arrangement of the standard basis
  vectors in $\RR^n$. As explained in \Cref{r:cell}, this polyhedral cell
  structure on $\RL$ arises from the hyperplanes of the form
  $\set{ \bm{p} \in \RL \mathrel{\big|} \bm{u}_{i} \cdot \bm{p} = j}$, where
  $1 \leqslant i \leqslant n$ and $j$ is an integer.  Under the quotient map
  $\pi \colon \RL \to \RL \mathbin{\!/\!}  L$, this periodic arrangement
  descends to a toric arrangement on the real torus $\RL \mathbin{\!/\!}
  L$. On the other hand, the topological stratification in
  \cite{HHL24}*{\S3.4} is also induced by the toric arrangement
  $\RL \mathbin{\!/\!} L$ consisting of the subtori determined by the linear
  forms $\bm{u}_{i}$ for all $1 \leqslant i \leqslant n$. We deduce that the
  two induced cell structures on the real torus $\RL \mathbin{\!/\!}  L$ are
  identical.

  It remains to relate the line bundles in \cite{HHL24}*{Proposition~3.5} to
  the ceiling stratification $\psi \colon \RL \to \ZZ^n$. For each point
  $\bm{q} \in M_X \otimes_{\ZZ} \RR$, \cite{HHL24}*{Equation~(6) in \S2.3}
  assigns the line bundle
  \[
    \mathcal{O}_{X} \bigl( (\floor{ - \bm{u}_1 \cdot \bm{q}}, \floor{
      -\bm{u}_2 \cdot \bm{q}}, \dotsc, \floor{ - \bm{u}_n \cdot \bm{q}})
    \bigr)
    = \mathcal{O}_{X} \bigl( - \ceil{\nu_{\RR}(\bm{q})} \bigr) \, ,
  \]
  where $\nu_{\RR} \colon M_{X} \otimes_{\ZZ} \RR \to \RR^n$ is the linear
  extension of $\nu \colon M_X \to \ZZ^n$. Since
  $\iota \colequal \nu \circ \kappa$ realizes the lattice $L$ as a subgroup of
  $\ZZ^n$, its linear extension $\iota_{\RR}$ realizes $\RL$ as a linear
  subspace of $\RR^n$. By construction, the ceiling function is constant on
  the open cells in $\RL$, so the line bundle assigned to every point in
  $\sigma$ is $\mathcal{O}_{X} \bigl( - \psi(\sigma) \kern-1.0pt
  \bigr)$. Passing to the real torus $\RL \mathbin{\!/\!}  L$, we see that the
  ceiling stratification and the functor in \cite{HHL24}*{Proposition~3.5}
  assign the same line bundle to each open cell.
\end{proof}

With the relevant building blocks in place, we compare resolutions of the
sheaf $\varphi_{*} \mathcal{O}_{Y}$.  Following \cite{CLS11}*{\S5.3}, each
finitely generated $S$-module is associated to a quasi-coherent
$\mathcal{O}_{X}$-module.

\begin{theorem}
  \label{t:hhl}
  Let $\varphi \colon Y \to X$ be a torus-equivariant embedding of a normal
  toric variety $Y$ into a smooth toric variety $X$ with no torus factors, and
  let $L \subseteq \ZZ^{n}$ be the lattice arising from $\varphi$. For the
  ceiling stratification $\psi \colon \RL \to \ZZ^n$, the
  $\mathcal{O}_{X}$-complex associated to the $S$-complex
  $F_{\psi} \otimes_{S[L]} S$ is isomorphic to the resolution of the
  $\mathcal{O}_{X}$-module $\varphi_{*} \mathcal{O}_{Y}$ in
  \cite{HHL24}*{Theorem~3.5}.
\end{theorem}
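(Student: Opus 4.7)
The plan is to match the two complexes term-by-term and differential-by-differential, relying on Lemma \ref{l:hhl} as the backbone of the identification and then invoking the functoriality of sheafification.

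First, I would match the terms. By Remark \ref{r:com}, the $S$-complex $F_{\psi} \otimes_{S[L]} S$ decomposes, in homological degree $i$, as a direct sum of free modules $S\,\sigma' \cong S \bigl( - (\eta \circ \psi)(\sigma) \bigr)$ indexed by the $i$-dimensional cells $\sigma' \subset \RL \mathbin{\!/\!} L$. Applying the sheafification functor from finitely generated $(\ZZ^{n} \mathbin{\!/\!} L)$-graded $S$-modules to quasi-coherent $\mathcal{O}_{X}$-modules, each summand $S(-\bm{u})$ becomes the line bundle $\mathcal{O}_{X}(-\bm{u})$. Lemma \ref{l:hhl} shows that the cell structure on $\RL \mathbin{\!/\!} L$ and the line-bundle assignment $\sigma' \mapsto \mathcal{O}_{X} \bigl( -\psi(\sigma) \bigr)$ coincide precisely with the stratification of the real torus and the line bundles assigned by the functor of \cite{HHL24}*{Proposition~3.9}. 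Hence the two complexes have the same terms indexed by the same cells.

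Next, I would match the differentials. Remark \ref{r:com} writes the differential from $\sigma'$ to $\tau'$ as $\sum_{\bm{v} \in L} \varepsilon(\sigma, \tau + \bm{v}) \, \bm{x}^{\psi(\sigma) - \psi(\tau) - \bm{v}}$, summed over lattice translates of a representative $\tau$. Each nonzero term corresponds to a single face relation $\tau + \bm{v} \subset \overline{\sigma}$ in $\RL$, and the monomial $\bm{x}^{\psi(\sigma) - \psi(\tau + \bm{v})}$ realizes the inclusion of line bundles $\mathcal{O}_{X} \bigl( -\psi(\sigma) \bigr) \hookrightarrow \mathcal{O}_{X} \bigl( -\psi(\tau + \bm{v}) \bigr)$ that is furnished by the HHL functor on a morphism in the exit path category; see \cite{HHL24}*{\S3.4}. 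The incidence numbers $\varepsilon(\sigma, \tau + \bm{v})$ encode the orientation-compatible attaching data that \cite{HHL24} uses to assemble the cellular chain complex of sheaves.

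To certify the resulting morphism is an isomorphism of complexes, I would note that both complexes are augmented to the same $\mathcal{O}_X$-module: $F_\psi \otimes_{S[L]} S$ resolves $\overline{S \mathbin{\!/\!} I_L}$ by Theorem \ref{t:resSat}, and since $Y$ is normal with $\varphi(Y) = V(I_L)$ in $X$, the associated sheaf of $\overline{S \mathbin{\!/\!} I_L}$ is $\varphi_{*} \mathcal{O}_{Y}$; the HHL resolution of \cite{HHL24}*{Theorem~3.5} has the same target by construction. The isomorphism of terms together with the compatibility of face maps forces an isomorphism of the corresponding cellular complexes.

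The main obstacle is the last bookkeeping step: reconciling the sign conventions and orientations implicit in \cite{HHL24}'s exit path formulation with the incidence numbers $\varepsilon(\sigma, \tau)$ used in the Cellular Boundary Formula. A clean way to handle this is to choose orientations on the cells of $\RL$ equivariantly with respect to the $L$-action, descend them to $\RL \mathbin{\!/\!} L$, and verify that both constructions produce the same signed monomial entries on each face-pair. Once the signs are pinned down on a fundamental domain, $L$-equivariance propagates the identification to the full complex, completing the isomorphism.
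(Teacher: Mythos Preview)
Your proposal is correct and follows essentially the same architecture as the paper: use \Cref{l:hhl} together with \Cref{r:com} to match terms, then match differentials, then handle signs. Two simplifications in the paper's execution are worth noting. For the differentials, rather than tracing through the exit-path functor, the paper observes that the monomial $\bm{x}^{\psi(\sigma)-\psi(\tau)}$ gives the \emph{unique} degree-$\bm{0}$ map $S(-\psi(\sigma)) \to S(-\psi(\tau))$, and that \cite{HHL24}*{Proposition~2.14} characterizes the HHL differential components by the same uniqueness; this forces agreement up to scalars without unpacking the HHL construction. For the signs, the paper simply cites that \cite{HHL24} allows an arbitrary orientation choice and that different choices yield isomorphic $\mathcal{O}_X$-complexes, so there is no need to fix equivariant orientations or verify sign identities on a fundamental domain. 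Finally, your augmentation paragraph is redundant: once terms and differentials agree (up to the isomorphism induced by the orientation choice), the complexes are isomorphic, and their common target is then automatic.
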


\begin{proof}
  For any lattice point $\bm{u} \in \ZZ^n$, the coherent sheaf associated to
  the $\ZZ^{n}$-graded $S$-module $S(\bm{u})$ is the torus-equivariant line
  bundle $\mathcal{O}_{X}(\bm{u})$; see \cite{CLS11}*{Proposition~5.3.7}. From
  the description of the $S$-complex $F_{\psi} \otimes_{S[L]} S$ in
  \Cref{r:com}, it follows that the $i$th term in its associated
  $\mathcal{O}_{X}$-complex is
  \[
    \bigoplus_{\begin{subarray}{c}
        \sigma' \subset \RL \mathbin{/} L \\
        \dim \sigma' = i
      \end{subarray}} \mathcal{O}_{X} \bigl( - \psi(\sigma) \kern-1.0pt \bigr),
  \]
  where $\sigma$ is a representative in $\RL$ for the coset $\sigma'$
  in the quotient $\RL \mathbin{\!/\!}  L$.  Combining \Cref{l:hhl} with
  \cite{HHL24}*{Equation~(18) in \S{A.1}} shows that this direct sum is also
  the $i$th term in the resolution in \cite{HHL24}*{Theorem~3.5}.

  We next examine the differentials. Following \Cref{d:comp}, the monomials
  $\bm{x}^{\psi(\sigma) - \psi(\tau)} \in S$ appearing in the differentials
  determine the unique homomorphism from
  $S \bigl( -\psi(\sigma) \kern-1.0pt \bigr)$ to
  $S \bigl( -\psi(\tau) \kern-1.0pt \bigr)$ having degree $\bm{0} \in \ZZ^n$.
  Moreover, monomials in the Cox ring $S$ are identified with global sections
  of line bundles; see \cite{CLS11}*{Proposition~5.3.7}.  Hence, the
  associated-sheaf functor sends the matrix
  $[\bm{x}^{\psi(\sigma) - \psi(\tau)}]$ to the unique torus-equivariant
  morphism from $\mathcal{O}_{X} \bigl( -\psi(\sigma) \kern-1.0pt \bigr)$ to
  $\mathcal{O}_{X} \bigl( -\psi(\tau) \kern-1.0pt \bigr)$. On the other hand,
  \cite{HHL24}*{Proposition~2.14} identifies the components of the
  differentials as the unique degree-$\bm{0}$ maps between relevant summands
  of the direct sum. Therefore, the associated-sheaf functor applied to the
  differentials in \Cref{r:com} agrees, up to scalars, with the differentials
  in \cite{HHL24}*{Equation~(19) in \S{A.1}}.

  Finally, \Cref{r:com} shows that the scalars in the differentials of the
  $S$-complex $F_{\psi} \otimes_{S[L]} S$ are determined by the incidence
  function $\varepsilon$.  The construction in \cite{HHL24}*{\S3.5} allows for
  an arbitrary choice of `orientation.'  As observed in the paragraph
  following \cite{HHL24}*{Example~A.3}, different choices of incidence
  functions produce isomorphic $\mathcal{O}_{X}$-complexes.
\end{proof}

\begin{remark}
  Since the construction of the $S$-complex $F_{\psi} \otimes_{S[L]} S$
  applies in all characteristics, Theorem~\ref{t:hhl} implies that the
  construction of the $\mathcal{O}_{X}$-complex in \cite{HHL24}*{\S3.4} is also
  independent of the characteristic of the underlying field $\kk$; compare
  with \cite{B+24}*{Remark~6.2}.
\end{remark}

\begin{remark}
  Unlike \Cref{t:hhl}, \cite{HHL24}*{Theorem~3.5} relaxes the hypotheses on
  the ambient space $X$, allowing smooth toric stacks.  For brevity, we again
  leave the variant of \Cref{t:hhl} for smooth toric stacks to the interested
  reader.
\end{remark}

Reversing direction, the $S$-complex associated to the resolution in
\cite{HHL24}*{Theorem~3.5} yields a cellular free resolution.  Following
\cite{CLS11}*{Definition~6.A.1}, set
\[
  \Gamma_{*}(\mathcal{F})
  \colequal \bigoplus_{\bm{u} \in \Pic(X)} H^{0} \bigl( X, \mathcal{F}(\bm{u})
  \kern-1.0pt \bigr)
  = \bigoplus_{\bm{u} \in \Pic(X)} H^{0} \bigl( X, \mathcal{F}
  \otimes_{\mathcal{O}_{X}} \mathcal{O}_{X}(\bm{u}) \kern-1.0pt \bigr)
\]
for any sheaf $\mathcal{F}$ of $\mathcal{O}_{X}$-modules on $X$.

\begin{corollary}
  The functor $\Gamma_{*}$ applied to the resolution of
  $\varphi_{*} \mathcal{O}_{Y}$ in \cite{HHL24}*{Theorem~3.5} is isomorphic to
  the $S$-complex $F_{\psi} \otimes_{S[L]} S$ associated to the ceiling
  stratification $\psi \colon \RL \to \ZZ^n$.
\end{corollary}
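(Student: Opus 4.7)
The plan is to combine \Cref{t:hhl} with the standard fact that $\Gamma_{*}$ inverts the associated-sheaf functor on direct sums of torus-invariant line bundles over a smooth toric variety with no torus factors.  By \Cref{t:hhl}, the resolution in \cite{HHL24}*{Theorem~3.5} is isomorphic to the $\mathcal{O}_{X}$-complex obtained by sheafifying $F_{\psi} \otimes_{S[L]} S$ (after coarsening the $(\ZZ^{n} \mathbin{\!/\!} L)$-grading to the $\Pic(X)$-grading via the canonical surjection $\ZZ^{n} \mathbin{\!/\!} L \twoheadrightarrow \Pic(X)$). Since $\Gamma_{*}$ is a functor, it suffices to produce a natural isomorphism between $\Gamma_{*}$ of this sheaf complex and $F_{\psi} \otimes_{S[L]} S$ itself.

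The key calculation proceeds termwise.  By \Cref{r:com}, each term $(F_{\psi} \otimes_{S[L]} S)_{i}$ is a direct sum of shifts $S \bigl( - \psi(\sigma) \bigr)$, which sheafify to the line bundles $\mathcal{O}_{X} \bigl( -\psi(\sigma) \bigr)$ on $X$.  For a smooth toric variety $X$ with no torus factors, the standard description of the Cox ring as $S = \bigoplus_{\bm{u} \in \Pic(X)} H^{0} \bigl( X, \mathcal{O}_{X}(\bm{u}) \bigr)$ yields the identification
\[
  \Gamma_{*} \bigl( \mathcal{O}_{X}(\bm{u}) \bigr)
  = \bigoplus_{\bm{v} \in \Pic(X)} H^{0} \bigl( X, \mathcal{O}_{X}(\bm{u}+\bm{v}) \bigr)
  \cong S(\bm{u})
\]
as $\Pic(X)$-graded $S$-modules.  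Since $\Gamma_{*}$ commutes with finite direct sums, it returns each term of $F_{\psi} \otimes_{S[L]} S$ up to this grading.  For the differentials, every morphism $\mathcal{O}_{X} \bigl( - \psi(\sigma) \bigr) \to \mathcal{O}_{X} \bigl( - \psi(\tau) \bigr)$ of torus-invariant line bundles is determined by a global section in $H^{0} \bigl( X, \mathcal{O}_{X}(\psi(\sigma) - \psi(\tau)) \bigr) = S_{\psi(\sigma) - \psi(\tau)}$, so applying $\Gamma_{*}$ recovers the monomial entries of the original differential (up to the same choice of orientation discussed at the end of the proof of \Cref{t:hhl}).

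The only point that requires careful bookkeeping, rather than being a genuine obstacle, is the discrepancy between gradings: because sheafification on $X$ can only see the $\Pic(X)$-grading, the isomorphism furnished by $\Gamma_{*}$ is most naturally an isomorphism of $\Pic(X)$-graded $S$-complexes, not of the finer $(\ZZ^{n} \mathbin{\!/\!} L)$-graded ones.  With this understood, the desired isomorphism follows by assembling the termwise identifications above and invoking the functoriality of $\Gamma_{*}$ on the isomorphism of $\mathcal{O}_{X}$-complexes supplied by \Cref{t:hhl}.
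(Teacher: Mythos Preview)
Your proposal is correct and follows essentially the same approach as the paper's sketch: both identify the terms via $\Gamma_{*}\bigl(\mathcal{O}_{X}(\bm{u})\bigr) \cong S(\bm{u})$ and then argue that the differentials are recovered because maps between torus-invariant line bundles correspond to monomial maps between shifts of $S$, unique up to scaling (i.e., up to the choice of incidence function). The only cosmetic difference is that the paper works directly with the $\ZZ^{n}$-grading via torus-equivariant structures, whereas you pass through the $\Pic(X)$-grading and flag the resulting bookkeeping; both viewpoints lead to the same conclusion.
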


\begin{proof}[Sketch of proof]
  To capitalize on the torus action, assume that
  $S \colequal \kk[x_1, x_2, \dotsc, x_n]$ has the canonical $\ZZ^{n}$-grading
  and that all line bundles on $X$ have a chosen torus-equivariant structure.
  For any lattice point $\bm{u} \in \ZZ^{n}$, \cite{CLS11}*{Proposition~5.3.7}
  demonstrates that
  $\Gamma_{*} \bigl( \mathcal{O}_{X}(\bm{u}) \kern-1.0pt \bigr) = S(\bm{u})$,
  which implies that the terms of the complexes are identical. Since
  \[
    \cHom \bigl( \mathcal{O}_{X}(\bm{u}), \mathcal{O}_{X}(\bm{v}) \kern-1.0pt
    \bigr)
    \cong\, \cHom \bigl( \mathcal{O}_{X}, \mathcal{O}_{X}(\bm{v} - \bm{u})
    \kern-1.0pt \bigr)
    \cong \mathcal{O}_{X}(\bm{v}- \bm{u}) \, ,
  \]
  torus-equivariant maps from $\mathcal{O}_{X}(\bm{u})$ to
  $\mathcal{O}_{X}(\bm{v})$ are the same as monomial maps from $S(\bm{u})$ to
  $S(\bm{v})$, which are unique up to scaling. Thus, the differentials agree
  up to a choice of incidence function.
\end{proof}

\subsection*{Relation to Brown--Erman resolutions}

Addressing a challenge raised in \cite{BE24}*{p.~3}, cellular free resolutions
allow a comparison between the minimal free resolutions in
\cite{BE24}*{Theorem~1.2} and the locally-free resolutions in
\cite{HHL24}*{Theorem~3.5}. The next corollary shows that a resolution from
\cite{BE24}*{Theorem~1.2} is always isomorphic to a direct summand of one from
\cite{HHL24}*{Theorem~3.5}.

\begin{corollary}
  \label{c:hhl}
  Let $\varphi \colon Y \to X$ be a torus-equivariant embedding of a normal
  toric variety $Y$ into a smooth toric variety $X$ with no torus factors.
  When $F$ is a minimal free resolution of the integral closure of the
  $S$-module $S \mathbin{\!/\!} I_L$, the $\mathcal{O}_{X}$-complex associated
  to $F$ is isomorphic to a direct summand of the resolution in
  \cite{HHL24}*{Theorem~3.5}.
\end{corollary}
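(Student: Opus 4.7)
The plan is to combine \Cref{t:resSat} with \Cref{t:hhl} via the standard decomposition theorem for graded free resolutions. Take $\psi \colon \RL \to \ZZ^{n}$ to be the ceiling stratification, and consider the cellular free $S$-complex $G \colequal F_{\psi} \otimes_{S[L]} S$. By \Cref{t:resSat}, the complex $G$ is a $(\ZZ^{n} \mathbin{\!/\!} L)$-graded free resolution of the integral closure $\overline{S \mathbin{\!/\!} I_{L}}$, and by \Cref{t:hhl}, the associated $\mathcal{O}_{X}$-complex is isomorphic to the resolution appearing in \cite{HHL24}*{Theorem~3.5}.

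Next, I would appeal to the standard decomposition theorem for graded free resolutions: over the polynomial ring $S$, equipped with either the $(\ZZ^{n} \mathbin{\!/\!} L)$-grading or its coarsening to $\Pic(X)$, any graded free resolution of a finitely generated graded module is isomorphic to the direct sum of its (essentially unique) minimal free resolution and a trivial complex, where by a trivial complex I mean a direct sum of complexes of the form $0 \to S(-\alpha) \xrightarrow{\,\id\,} S(-\alpha) \to 0$. Finite generation of $\overline{S \mathbin{\!/\!} I_{L}}$ over $S$ follows from Hochster's theorem, which is itself a consequence of \Cref{t:main}. Applying this decomposition in our setting, there exists a trivial complex $T$ and an isomorphism of $S$-complexes $G \cong F \oplus T$.

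The associated-sheaf functor is exact and additive, so this isomorphism yields $\widetilde{G} \cong \widetilde{F} \oplus \widetilde{T}$ in the category of $\mathcal{O}_{X}$-complexes. Combined with \Cref{t:hhl}, this realizes $\widetilde{F}$ as a direct summand of the resolution from \cite{HHL24}*{Theorem~3.5}, which is exactly the desired conclusion.

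The main technical point is justifying the decomposition $G \cong F \oplus T$ at the level of $(\ZZ^{n} \mathbin{\!/\!} L)$-graded complexes. The classical argument over $\ZZ$-graded polynomial rings exploits the fact that the irrelevant ideal is the unique graded maximal ideal, so the graded Nakayama lemma applies and trivial summands can be peeled off inductively degree by degree. The same reasoning survives under the finer $(\ZZ^{n} \mathbin{\!/\!} L)$-grading (and under the $\Pic(X)$-grading used implicitly in \cite{BE24}), because $S$ remains graded local with respect to the irrelevant ideal. Once this decomposition is established, exactness of the associated-sheaf functor immediately delivers the corollary.
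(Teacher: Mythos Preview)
Your proposal is correct and follows essentially the same route as the paper: invoke \Cref{t:resSat} to see that $F_{\psi}\otimes_{S[L]}S$ resolves $\overline{S/I_L}$, use the standard fact that a minimal free resolution is a direct summand of any free resolution of the same module, then apply \Cref{t:hhl} and additivity of the associated-sheaf functor. The paper states the direct-summand step in a single sentence without further justification, whereas you spell out the graded Nakayama rationale; one small caution is that the paper explicitly allows $L\cap\NN^{n}\neq\{\bm 0\}$, so $S$ need not be graded local in the $(\ZZ^{n}/L)$-grading in the naive sense, but the decomposition of a finite free resolution into a minimal one plus a trivial complex still holds over a polynomial ring and is all that is needed here.
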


\begin{proof}
  Let $L$ be the sublattice of $\ZZ^{n}$ in \Cref{r:toric}, and let
  $\psi \colon L \to \ZZ^{n}$ denote the ceiling stratification.
  \Cref{t:resSat} establishes that the $S$-complex $F_{\psi} \otimes_{S[L]} S$
  is a free resolution of the $S$-module
  $\smash{\overline{S \mathbin{\!/\!} I_L}}$.  Being a minimal free resolution
  of $\smash{\overline{S \mathbin{\!/\!} I_L}}$, it follows that $F$ is
  isomorphic to a direct summand of $F_{\psi} \otimes_{S[L]} S$. \Cref{t:hhl}
  shows that the $\mathcal{O}_{X}$-complex associated to
  $F_{\psi} \otimes_{S[L]} S$ is isomorphic to the resolution of the
  $\mathcal{O}_{X}$-module $\varphi_{*} \mathcal{O}_{Y}$ in
  \cite{HHL24}*{Theorem~3.5}.  Since the associated-sheaf functor commutes with
  finite direct sums, we conclude that the $\mathcal{O}_{X}$-complex
  associated to $F$ is isomorphic to a direct summand of the resolution in
  \cite{HHL24}*{Theorem~3.5}.
\end{proof}

We also determine when a cellular free resolution is minimal.  The criterion
for minimality applies to every compatible stratification and is a minor
variant of \cite{BS98}*{Remark~1.4}.

\begin{lemma}
  \label{l:cond}
  Let $\psi \colon L \to \ZZ^{n}$ be a compatible stratification.  The
  cellular free $S$-complex $F_{\psi}$ is minimal if and only if, for every
  lattice point $\bm{u} \in \ZZ^n$, each connected component of the preimage
  $\psi^{-1}(\bm{u})$ consists of a single open cell.
\end{lemma}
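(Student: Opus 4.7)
The plan is to reduce the statement to an equivalent algebraic condition on the matrix entries of the differentials in $F_\psi$, then translate that condition back into the topology of $\RL$. A $\ZZ^n$-graded free complex over $S$ is minimal precisely when no entry of any differential is a unit of $S$. By \Cref{d:comp}, the entry of $\partial$ coupling the summand indexed by the open cell $\sigma$ to the summand indexed by $\tau$ is $\varepsilon(\sigma,\tau)\,\bm{x}^{\psi(\sigma)-\psi(\tau)}$, which is a unit exactly when $\varepsilon(\sigma,\tau)\neq 0$ and $\psi(\sigma)=\psi(\tau)$. Hence the lemma reduces to showing that such a pair $(\sigma,\tau)$ exists if and only if some preimage $\psi^{-1}(\bm{u})$ has a connected component containing at least two open cells.

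The forward direction is essentially immediate from the support of the incidence function. If $\varepsilon(\sigma,\tau)\neq 0$, then $\tau\subseteq\overline{\sigma}$, so $\sigma\cup\tau$ is a connected subset of $\psi^{-1}(\bm{u})$ for $\bm{u}\colequal\psi(\sigma)=\psi(\tau)$; this exhibits a connected component of $\psi^{-1}(\bm{u})$ containing two distinct open cells.

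The converse, which I expect to be the principal obstacle, requires extracting a combinatorial incidence from mere topological connectedness. Suppose $C$ is a connected component of $\psi^{-1}(\bm{u})$ with more than one open cell. I would select $\tau\in C$ of minimum dimension and, using path-connectedness, join a point of $\tau$ to a point of a distinct cell in $C$ by a path $\gamma$ in $C$. Compactness of $\gamma([0,1])$ forces it to meet only finitely many cells of $\RL$, and continuity at the last moment $\gamma$ lies in $\tau$ produces a cell $\rho\in C$ with $\rho\neq\tau$ whose closure meets $\overline{\tau}$; the minimality of $\dim\tau$ rules out the alternative $\rho\subseteq\overline{\tau}$, leaving $\tau\subsetneq\overline{\rho}$. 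If $\dim\rho>\dim\tau+1$, a standard chain of faces in the CW complex yields an intermediate open cell $\sigma$ with $\tau\subsetneq\overline{\sigma}\subsetneq\overline{\rho}$ and $\dim\sigma=\dim\tau+1$, and \Cref{l:lab} sandwiches $\psi(\sigma)$ between $\psi(\tau)=\psi(\rho)=\bm{u}$, placing $\sigma$ in $C$. Under the regularity of the underlying cell structure (which holds throughout the paper's examples via \Cref{r:cell}), the incidence number $\varepsilon(\sigma,\tau)$ equals $\pm 1$, producing the unit entry of $\partial$ needed to refute minimality.

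The main technical difficulty concentrates in the path-tracing step: correctly extracting the adjacent cell $\rho$ with $\tau\subsetneq\overline{\rho}$ from the connectedness hypothesis, carefully using the last time $\gamma$ sits inside $\tau$ and invoking the minimality of $\dim\tau$ to discard the pathological configuration $\rho\subseteq\overline{\tau}$. Once $\rho$ is in hand, the propagation of $\psi=\bm{u}$ along face inclusions via \Cref{l:lab} and the nonvanishing of $\varepsilon(\sigma,\tau)$ in the regular setting complete the argument.
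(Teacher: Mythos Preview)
Your reduction and the easy direction match the paper exactly. For the hard direction, the paper is far terser than you: having observed that minimality forbids any codimension-one face of $\sigma$ from lying in $\psi^{-1}(\bm{u})$, it simply concludes ``hence the connected component \dots\ contains no other cells.'' Your path-tracing fills this in correctly, but a shorter route is available and is presumably what the paper intends: by \Cref{l:lab} the preimage $\psi^{-1}(\bm{u})$ is order-convex in the face poset, so the absence of codimension-one faces with label $\bm{u}$ already forces $\overline{\sigma}\cap\psi^{-1}(\bm{u})=\sigma$ for every cell $\sigma$ therein, making each cell relatively closed and hence its own component. This sidesteps the compactness and last-exit-time analysis you flag as the main difficulty. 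One small imprecision in your version: the dichotomy ``$\rho\subseteq\overline{\tau}$ or $\tau\subseteq\overline{\rho}$'' does not follow from $\overline{\rho}\cap\overline{\tau}\neq\emptyset$ alone (two cells can share a proper face of each), but the minimality of $\dim\tau$ forces the cell containing $\gamma(t_0)$ to be $\tau$ itself, whence $\tau\cap\overline{\rho}\neq\emptyset$ and the conclusion follows.

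You are right to invoke regularity, and you should not be apologetic about it: the paper's own reduction (minimality $\Leftrightarrow$ $\psi(\sigma)\neq\psi(\tau)$ for every codimension-one face pair) already presupposes that $\varepsilon(\sigma,\tau)\neq 0$ whenever $\tau$ is a codimension-one face of $\sigma$, which is exactly the regularity hypothesis. Without it the lemma can genuinely fail, so your caveat is a feature, not a defect.
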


\begin{proof}
  A free $S$-complex is minimal if and only if no entry in the matrices of the
  differentials is a unit.  From \Cref{d:comp}, it follows that $F_{\psi}$ is
  minimal if and only if, for all open cells $\sigma, \tau \subset \RL$ such
  that $\tau$ lies in the closure of $\sigma$ and
  $\dim \sigma = 1 + \dim \tau$, we have $\psi(\sigma) \neq \psi(\tau)$.

  Fix a lattice point $\bm{u} \in \ZZ^n$.  Suppose that the open cell $\sigma$
  is contained in the preimage $\psi^{-1}(\bm{u})$.  When $F_{\psi}$ is
  minimal, none of the open cells $\tau$ satisfying
  $\tau \subseteq \overline{\sigma}$ and $\dim \sigma = 1 + \dim \tau$ are
  contained in $\psi^{-1}(\bm{u})$.  Hence, the connected component of this
  preimage that contains $\sigma$ contains no other cells.  Conversely, if
  each connected component of the preimage $\psi^{-1}(\bm{u})$ is a single
  open cell, then no open cell in the closure has the same value under $\psi$.
\end{proof}

\begin{remark}
  \label{r:min}
  When each preimage of $\psi$ lies in the closure of a single open cell, the
  criterion in \Cref{l:cond} reduces to verifying that the preimages are
  relatively open.
\end{remark}

One important family of embeddings always produces minimal free resolutions.
Assume that $Y$ is a smooth toric variety with no torus factors and let $m$ be
the number of rays in its fan.  The product $X \colequal Y \times Y$ is also a
smooth toric variety with no torus factors and $n \colequal 2m$ rays in its
fan. As in \Cref{e:diag}, consider the diagonal embedding
$\Delta \colon Y \to X$, where the lattice is
\[
L \colequal \Lambda(M_{Y}) \colequal \smash{\set{(\bm{v}, - \bm{v}) \in \ZZ^{m}
    \times \ZZ^{m} \mathrel{|} \bm{v} \in M_Y}} \subset \ZZ^{n} \, .
\]
The ceiling stratification $\psi \colon \RL \to \ZZ^{n}$ is defined by $\psi
\bigl( \bm{q},-\bm{q} \bigr) = (\ceil{\bm{q}}, \ceil{-
  \bm{q}}) = (\ceil{\bm{q}}, - \floor{\bm{q}})$ for all
$\bm{q} \in (M_{Y} \otimes_{\ZZ} \RR)\subseteq \RR^{m}$.

\begin{theorem}
  \label{t:cell-min}
  Let $Y$ be a smooth toric variety with no torus factors, and let $S$ denote
  the Cox ring of the product $Y \times Y$.  When $\psi$ is the ceiling
  stratification for the diagonal embedding $\Delta \colon Y \to Y \times Y$,
  the cellular $S$-complex $F_{\psi} \otimes_{S[L]} S$ is a minimal free
  resolution.
\end{theorem}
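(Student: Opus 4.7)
The plan is to verify, via \Cref{l:cond} and \Cref{r:min}, that every nonempty fiber of the ceiling stratification $\psi$ on $\RL$ is a single open cell of the standard cell structure. Since \Cref{t:resSat} already establishes that $F_\psi \otimes_{S[L]} S$ is a free resolution, only minimality needs to be proved.

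First I would reduce minimality of $F_\psi \otimes_{S[L]} S$ to minimality of the cellular free $S[L]$-complex $F_\psi$. By \Cref{r:com}, each entry of the differential of $F_\psi \otimes_{S[L]} S$ has the form $\sum_{\bm{v} \in L} \varepsilon(\sigma, \tau + \bm{v}) \, \bm{x}^{\psi(\sigma) - \psi(\tau) - \bm{v}}$. As $\bm{v}$ varies over $L$ the exponent vectors $\psi(\sigma) - \psi(\tau) - \bm{v}$ are distinct, so no cancellation of monomials can occur and this finite sum is a unit in $S$ if and only if one of the monomials is constant; that in turn happens precisely when there is some $\bm{v} \in L$ with $\psi(\sigma + \bm{v}) = \psi(\tau)$. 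Hence $F_\psi \otimes_{S[L]} S$ is minimal exactly when the condition of \Cref{l:cond} holds, and it suffices to show each fiber $\psi^{-1}(\bm{u})$ in $\RL$ consists of at most one open cell.

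Next I would carry out a coordinatewise case analysis of the fibers. Identify $\RL$ with $M_Y \otimes_\ZZ \RR$ via $\bm{q} \mapsto (\bm{q}, -\bm{q})$, and let $\bm{u}_1, \dotsc, \bm{u}_m$ denote the primitive ray generators of the fan of $Y$. Under this identification, the standard cell structure on $\RL$ arises from the periodic hyperplane arrangement $\set{\bm{q} \mathrel{\big|} \bm{u}_i \cdot \bm{q} \in \ZZ}$, and an open cell is determined by specifying, for each $i$, either an integer value $k_i$ for $\bm{u}_i \cdot \bm{q}$ or an open interval $(k_i, k_i + 1)$ in which it lies. Writing a target label as $(\bm{a}, \bm{b}) \in \ZZ^m \times \ZZ^m$, the condition $\psi(\bm{q}) = (\bm{a}, \bm{b})$ translates coordinatewise into $\ceil{\bm{u}_i \cdot \bm{q}} = a_i$ and $\ceil{- \bm{u}_i \cdot \bm{q}} = b_i$. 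Intersecting the resulting half-open intervals $(a_i - 1, a_i]$ and $[-b_i, -b_i + 1)$ shows that the fiber is empty unless $a_i + b_i \in \set{0, 1}$ for every $i$, and otherwise it pins $\bm{u}_i \cdot \bm{q}$ to the integer $a_i$ when $a_i + b_i = 0$ and to the open interval $(a_i - 1, a_i)$ when $a_i + b_i = 1$. This matches, on the nose, the description of a single open cell.

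Combining these two steps, each nonempty fiber $\psi^{-1}(\bm{u})$ is a single open cell of $\RL$, so \Cref{l:cond} yields minimality of $F_\psi$, which the opening reduction promotes to minimality of $F_\psi \otimes_{S[L]} S$. The delicate point is the coordinatewise case check: the boundary behaviour of $\ceil{\cdot}$ on the summand $\bm{q}$ is closed on the right while on the summand $-\bm{q}$ it is closed on the left, and these must dovetail with the open/closed conditions that distinguish an open cell from its boundary in the standard cell structure. The two cases $a_i + b_i \in \set{0, 1}$ record exactly this dovetailing, which is what makes the diagonal embedding produce a minimal resolution while a generic compatible stratification need not.
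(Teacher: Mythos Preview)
Your proposal is correct and follows essentially the same route as the paper: both invoke \Cref{t:resSat} for exactness, perform the coordinatewise analysis showing that a fiber of $\psi$ forces $a_i + b_i \in \set{0,1}$ and hence pins each $\bm{u}_i \cdot \bm{q}$ either to an integer or to an open unit interval, conclude via \Cref{l:cond} (and \Cref{r:min}) that $F_\psi$ is minimal, and then use the description of the differentials in \Cref{r:com} to pass minimality to $F_\psi \otimes_{S[L]} S$. Your reduction step is spelled out more carefully than the paper's one-line appeal to \Cref{r:com}; note that for the theorem you only need the implication ``no constant monomial $\Rightarrow$ not a unit,'' which is immediate, so the ``exactly when'' in your first paragraph is stronger than necessary but harmless.
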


\begin{proof}
  Since \Cref{t:resSat} establishes that this cellular free $S$-complex is a
  resolution of the integral closure of the $S$-module
  $S \mathbin{\!/\!} I_L$, it suffices to show that
  $F_{\psi} \otimes_{S[L]} S$ is minimal.

  We first analyze the closed subset $(\RL)_{\leqslant (\bm{u}, \bm{v})}$,
  where $(\bm{u}, \bm{v}) \in \ZZ^{2m}$.  Note that
  $\psi \bigl( \bm{q}, -\bm{q} \bigr) = (\bm{u}, \bm{v})$ if
  and only if $u_{i} = \ceil{q_{i}}$ and $v_{i} = \ceil{-q_{i}}$ for all
  $1 \leqslant i \leqslant m$. Since
  \[
    \ceil{q_{i}} + \ceil{-q_i}
    = \begin{cases}
      0 & \text{if $q_{i} \in \ZZ$} \\[-2pt]
      1 & \text{if $q_{i} \not\in \ZZ$,} \\
    \end{cases}
  \]
  it follows that either $u_{i} + v_{i} = 0$, $u_{i} + v_{i} = 1$, or the
  subset $(\RL)_{\leqslant (\bm{u}, \bm{v})}$ is empty.  Fixing the index $i$,
  the first case corresponds to the subset
  $\smash{\set{\bm{p} \in \RL \mathrel{\big|} q_i = u_{i} }}$ and the second
  corresponds to the subset
  $\smash{\set{\bm{p} \in \RL \mathrel{\big|} u_{i} -1 < q_i < u_{i} }}$.
  Since each nonempty $(\RL)_{\leqslant (\bm{u}, \bm{v})}$ is the finite
  intersection of such subsets, it is relatively open.

  Combining \Cref{l:cond} and \Cref{r:min} establishes the minimality of the
  $S[L]$-complex $F_{\psi}$.  From the description of the differentials in
  \Cref{r:com}, we deduce that $F_{\psi} \otimes_{S[L]} S$ is also minimal.
\end{proof}

\begin{corollary}
  \label{c:HHL-BE}
  Let $Y$ be a smooth toric variety with no torus factors, and let $S$ be the
  Cox ring of $X \colequal Y \times Y$.  When $\Delta \colon Y \to X$ is the
  diagonal embedding, the resolutions of the $\mathcal{O}_{X}$-module
  $\Delta_{*}^{} \mathcal{O}_{Y}$ in \cite{HHL24}*{Theorem~3.5} and
  \cite{BE24}*{Theorem~1.2} are both isomorphic to the $\mathcal{O}_{X}$-complex
  associated to the minimal cellular free resolution of the integral closure
  of the $S$-module $S \mathbin{\!/\!} I_L$.
\end{corollary}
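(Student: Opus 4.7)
The plan is to reduce the corollary to earlier results in the section together with the uniqueness of minimal free resolutions. First, I will apply Theorem~\ref{t:resSat} and Theorem~\ref{t:cell-min} to conclude that, for the ceiling stratification $\psi$ attached to the diagonal embedding $\Delta \colon Y \to X$, the cellular free $S$-complex $F_{\psi} \otimes_{S[L]} S$ is a minimal $(\ZZ^n \mathbin{\!/\!} L)$-graded free resolution of the integral closure of $S \mathbin{\!/\!} I_L$. Since minimal free resolutions of a fixed module are unique up to isomorphism, it then suffices to identify both the \cite{HHL24} and \cite{BE24} resolutions with the $\mathcal{O}_X$-complex associated to this particular $F_{\psi} \otimes_{S[L]} S$.

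For the HHL side, I will invoke Theorem~\ref{t:hhl} directly: the $\mathcal{O}_X$-complex associated to $F_\psi \otimes_{S[L]} S$ is isomorphic to the resolution of $\Delta_{*} \mathcal{O}_Y$ in \cite{HHL24}*{Theorem~3.5}. Combined with the minimality delivered by Theorem~\ref{t:cell-min}, this disposes of the HHL half of the statement without further work.

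For the BE side, I will apply Corollary~\ref{c:hhl}: since \cite{BE24}*{Theorem~1.2} produces the $\mathcal{O}_X$-complex associated to a minimal free $S$-resolution of the integral closure of $S \mathbin{\!/\!} I_L$, the BE resolution is isomorphic to a direct summand of the HHL resolution. In the diagonal setting, however, the HHL resolution is itself the $\mathcal{O}_X$-complex of the already-minimal complex $F_\psi \otimes_{S[L]} S$, so no proper direct summand can still resolve $\Delta_{*} \mathcal{O}_Y$. Hence the direct-summand containment is in fact an equality, and the BE resolution also coincides with the $\mathcal{O}_X$-complex of the minimal cellular free resolution.

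I expect the main obstacle to be confirming that the BE construction genuinely produces the associated sheaf of a minimal free $S$-resolution of the integral closure of $S \mathbin{\!/\!} I_L$---this is precisely what is required to invoke Corollary~\ref{c:hhl}---together with checking that the associated-sheaf functor respects the splitting structure used in the direct-summand argument, so that no ``hidden'' cancellations on the sheaf side undermine the minimality conclusion. Once these compatibilities are verified, the remainder of the proof is a short chain of invocations of Theorem~\ref{t:resSat}, Theorem~\ref{t:cell-min}, Theorem~\ref{t:hhl}, and Corollary~\ref{c:hhl}.
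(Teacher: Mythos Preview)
Your proposal is correct and uses essentially the same ingredients as the paper's proof: \Cref{t:resSat} and \Cref{t:cell-min} to see that $F_{\psi} \otimes_{S[L]} S$ is a minimal free resolution of $\overline{S/I_L}$, \Cref{t:hhl} for the HHL identification, and the fact that the \cite{BE24} resolution is by construction the sheafification of a minimal free resolution of $\overline{S/I_L}$. The only difference is that your BE step takes an unnecessary detour through \Cref{c:hhl} and a direct-summand argument---once you grant that the BE complex is the sheaf of \emph{a} minimal free resolution of $\overline{S/I_L}$, uniqueness of minimal free resolutions already identifies it with $F_{\psi} \otimes_{S[L]} S$, and your worry about ``hidden cancellations on the sheaf side'' never arises.
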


\begin{proof}
  As in \Cref{r:toric}, let $L \subseteq \RR^{n}$ be the lattice arising from
  $\Delta$, and let $\psi \colon \RL \to \ZZ^{n}$ be the ceiling
  stratification.  By \Cref{c:hhl}, the resolution of the
  $\mathcal{O}_{X}$-module $\Delta_{*}^{} \mathcal{O}_{Y}$ in
  \cite{HHL24}*{Theorem~3.5} is isomorphic to the $\mathcal{O}_{X}$-complex
  associated to the $S$-complex $F_{\psi} \otimes_{S[L]} S$.  \Cref{t:resSat}
  and \Cref{t:cell-min} show that $F_{\psi} \otimes_{S[L]} S$ is a minimal
  free resolution of the integral closure of the $S$-module
  $S \mathbin{\!/\!} I_L$. The resolution of $\Delta_{*}^{} \mathcal{O}_{Y}$
  in \cite{BE24}*{Theorem~1.2} is the $\mathcal{O}_{X}$-module associated to
  the minimal free resolution of the integral closure of the $S$-module
  $S \mathbin{\!/\!} I_L$, which completes the proof.
\end{proof}

\section{Applications to other resolutions of the diagonal}
\label{s:oth}

\noindent
By finding suitable compatible $\ZZ^n$-stratifications, this section
reinterprets the resolutions of the diagonal in \cite{And24}*{Theorem~1.1} and
\cite{FH25}*{Example~3.15} via cellular free resolutions.

\subsection*{Sheaves arising from nonceiling stratifications}

We first demonstrate that \Cref{p:nor} does not extend to all compatible
$\ZZ^{m}$-stratifications.  Precomposing a ceiling stratification with a
translation by a real vector produces the desired counterexample.

\begin{example}[A module derived from a nonceiling stratifcation]
  \label{e:nonEx}
  Let $S = \kk[x_1, x_2, y_1, y_2]$ be the Cox ring of $\PP^1 \times
  \PP^1$. The lattice $L \subset \ZZ^4$ generated by the vector $(1,-1,-1,1)$
  determines the toric ideal $I_{L} = \ideal{x_1 y_2 - x_2 y_1}$, and this
  ideal cuts out the image of the diagonal embedding
  $\Delta \colon \PP^1 \to \PP^1 \times \PP^1$; compare with
  \Cref{e:diag}. Endow the space $\RL \cong \RR^1$ with an $\ZZ$-equivariant
  cell complex with a fundamental domain having the three vertices and the two
  $1$-cells that correspond to the points $0$, $0.5$, and $1$, and the open
  intervals $(0,0.5)$ and $(0.5,1)$ in $\RR^1$.  Denote by $\iota_\RR \colon \RR^1 \to \RR^4$ the inclusion with image $\RL$.

  One verifies that the map
  $\psi \colon \RL \to \ZZ^4$ defined by
  $\psi \bigl( \iota_{\RR} (q) \kern-1.0pt \bigr) \colequal \ceil{(q-0.5, -q,
    -q-0.5, q)}$ for all $q \in \RR^1$ is a compatible
  $\ZZ^4$-stratification.  \Cref{p:gens} shows that the $S$-module
  $\smash{M_{\psi} \otimes_{S[L]} S}$ is generated by the monomials $1$ and
  $\smash{y_1^{-1} y_{2}^{}}$ in the group algebra
  $\kk[\ZZ^{4} \mathbin{\!/\!} L]$.  It follows that the sheaf on
  $\smash{\PP^1 \times \PP^1}$ associated to
  $\smash{M_{\psi} \otimes_{S[L]} S}$ coincides with the one associated to
  $\smash{\bigl(S \mathbin{\!/\!} I_L \bigr) (0,1)}$.  The latter is not the
  sheaf associated to $S \mathbin{\!/\!} I_L$, which is the same as the sheaf
  associated to $\smash{\overline{S \mathbin{\!/\!} I_L}}$.
\end{example}

Despite this example, the $S$-modules $M_\psi \otimes_{S[L]} S$ and
$S \mathbin{\!/\!}  I_L$ retain a geometric connection. For the remainder of
this subsection, let $\varphi \colon Y \to X$ be a torus-equivariant embedding
of a normal toric variety $Y$ into a smooth toric variety $X$ with no torus
factors. The Cox ring of $X$ is the polynomial ring
$S \colequal \kk[x_1, x_2, \dotsc, x_n]$. As in \Cref{r:toric},
$L \subseteq \ZZ^n$ is the lattice arising from $\varphi$.

\begin{lemma}
  \label{l:supp}
  For any compatible $\ZZ^n$-stratification $\psi \colon \RL \to \ZZ^n$, the
  $\mathcal{O}_{X}$-modules associated to the $S$-modules
  $M_\psi \otimes_{S[L]} S$ and $S \mathbin{\!/\!} I_L$ coincide on the dense
  algebraic torus in $X$.  Moreover, the support of the sheaf associated to
  $M_\psi \otimes_{S[L]} S$ equals the subvariety in $X$ cut out by the toric
  ideal $I_{L}$.
\end{lemma}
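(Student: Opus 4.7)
The plan is to exploit the observation that both $S \mathbin{\!/\!} I_L$ and $M_{\psi} \otimes_{S[L]} S$ embed canonically into the group algebra $\kk[\ZZ^n \mathbin{\!/\!} L]$, and that both embeddings become equalities after inverting all of the Cox variables. For $S \mathbin{\!/\!} I_L$, this embedding is the quotient map onto the image of the canonical $\kk$-algebra homomorphism $S \to \kk[\ZZ^n \mathbin{\!/\!} L]$, whose kernel is $I_L$, as in the proof of \Cref{p:nor}. For $M_{\psi} \otimes_{S[L]} S$, \Cref{p:gens} identifies it with the $S$-submodule of $\kk[\ZZ^n \mathbin{\!/\!} L]$ generated by the monomials $\bm{x}^{\eta \circ \psi(\bm{p})}$ over vertices $\bm{p}$ of $\RL$.

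For the first assertion, I would argue that restricting the associated sheaf to the dense torus $T_X$ corresponds to inverting all of the variables $x_1, x_2, \dotsc, x_n$ in the Cox ring. The localization of $S \mathbin{\!/\!} I_L$ at these variables is $\kk[\ZZ^n] \mathbin{\!/\!} I_L \kk[\ZZ^n] = \kk[\ZZ^n \mathbin{\!/\!} L]$, and because each monomial generator $\bm{x}^{\eta \circ \psi(\bm{p})}$ of $M_{\psi} \otimes_{S[L]} S$ is already a unit in $\kk[\ZZ^n \mathbin{\!/\!} L]$, the analogous localization of $M_{\psi} \otimes_{S[L]} S$ likewise fills out the whole group algebra. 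Hence both associated sheaves coincide on $T_X$.

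For the second assertion, the embedding of $M_{\psi} \otimes_{S[L]} S$ into $\kk[\ZZ^n \mathbin{\!/\!} L]$ shows that the $S$-action factors through $S \mathbin{\!/\!} I_L$, so the annihilator of $M_{\psi} \otimes_{S[L]} S$ contains $I_L$. This forces the support of the associated sheaf to lie inside the subvariety cut out by $I_L$. For the reverse containment, I would invoke the first assertion: on the open set $T_X$ the two sheaves coincide, and there the sheaf associated to $S \mathbin{\!/\!} I_L$ is the structure sheaf of the dense torus of the image $\varphi(Y)$. Since this dense torus is dense in the closed subvariety cut out by $I_L$ and the support of a coherent sheaf is closed in $X$, the support of the sheaf associated to $M_{\psi} \otimes_{S[L]} S$ must contain the entire subvariety cut out by $I_L$.

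The main delicacy is making precise what ``restriction to the dense torus'' and ``support on $X$'' mean at the level of $\Pic(X)$-graded $S$-modules via the Cox sheafification, rather than the literal operations on $S$-modules in $\Spec(S)$. Given the standard dictionary between $\Pic(X)$-graded $S$-modules (modulo the irrelevant ideal) and quasi-coherent $\mathcal{O}_{X}$-modules, I expect these translations to be essentially routine.
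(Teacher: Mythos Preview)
Your proposal is correct and, for the first assertion, essentially identical to the paper's argument: both observe that $S/I_L$ and $M_\psi \otimes_{S[L]} S$ sit inside $\kk[\ZZ^n/L]$ as $S$-submodules containing a monomial, and that inverting the Cox variables makes each coincide with the full group algebra.

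For the second assertion your route diverges slightly. The paper argues purely algebraically: because $M_\psi \otimes_{S[L]} S$ contains a monomial (hence a unit of $\kk[\ZZ^n/L]$), every nonzero $S$-submodule of $\kk[\ZZ^n/L]$ meets it, so the annihilator of $M_\psi \otimes_{S[L]} S$ equals the annihilator of $\kk[\ZZ^n/L]$, which is exactly $I_L$; the support statement then follows at once. You instead get the containment $I_L \subseteq \operatorname{Ann}$ from the embedding and obtain the reverse inclusion on supports geometrically, by combining the first assertion with the density of $\varphi(Y) \cap T_X$ in $V(I_L)$ and the closedness of the support of a coherent sheaf. Both arguments are valid; the paper's buys you the sharper conclusion $\operatorname{Ann}(M_\psi \otimes_{S[L]} S) = I_L$ without passing through sheaves, while yours avoids the essential-submodule step at the cost of invoking coherence and a topological closure.
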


\begin{proof}
  Since
  $R \colequal \kk[x_1^{\pm 1}, x_{2}^{\pm 1}, \dotsc, x_{n}^{\pm 1}] =
  \kk[\ZZ^{n}]$, \cite{CLS11}*{Proposition~3.3.11 and Proposition~5.3.3} show
  that the coordinate ring of the dense algebraic torus in $X$ is
  $( S_{x_1x_2 \dotsc x_n})_{\bm{0}} = (R)_{\bm{0}}$.  Hence, it suffices to
  prove that the modules $(M_\psi \otimes_{S[L]} S) \otimes_{S} R$ and
  $(S \mathbin{\!/\!} I_L) \otimes_{S} R$ are isomorphic. Both
  $M_\psi \otimes_{S[L]} S$ and $S \mathbin{\!/\!} I_L$ are $S$-submodules of
  $\kk[\ZZ^n \mathbin{\!/\!} L]$ containing a monomial.  As any monomial
  generates $\kk[\ZZ^n \mathbin{\!/\!} L]$ as an $R$-module, tensoring with
  $R$ yields the required isomorphisms
  \[
    (M_\psi \otimes_{S[L]} S) \otimes_{S} R
    \cong \kk[\ZZ^n \mathbin{\!/\!} L]
    \cong (S \mathbin{\!/\!} I_L) \otimes_{S} R \, .
  \]

  Because every nonzero submodule of $\kk[\ZZ^n \mathbin{\!/\!} L]$ intersects
  $M_\psi \otimes_{S[L]} S$ nontrivially, the annihilator of the $S$-module
  $M_\psi \otimes_{S[L]} S$ equals the annihilator of the $S$-module
  $\kk[\ZZ^n \mathbin{\!/\!} L]$, which is the toric ideal $I_{L}$.  Thus, the
  second assertion follows.
\end{proof}

We also provide a sufficient condition to ensure that a compatible
$\ZZ^{n}$-stratification $\psi \colon \RL \to \ZZ^{n}$ gives rise to a
locally-free resolution of the sheaf associated to the $S$-module $S
\mathbin{\!/\!} I_L$; compare with \cite{CLS11}*{Proposition~5.3.10}.  As in
\cite{BS98}*{\S0}, the $S$-module arising from $L$ is
\[
  M_{L} \colequal S \cdot \set{x^{\bm{v}} \mathrel{\big|} \bm{v} \in L} \subset R \, .
\]
For any cone $\sigma$ in the fan $\Sigma_X$ of the toric variety $X$, the
monomial $\smash{\bm{x}^{\widehat{\sigma}}}$ is defined as the product of the
variables in $S$ corresponding to the rays not in $\sigma$.

\begin{proposition}
  \label{p:sheaf}
  Let $\psi \colon \RL \to \ZZ^{n}$ be a compatible $\ZZ^{n}$-stratification
  such that $\psi(\bm{0}) = \bm{0}$ and the cellular free $S[L]$-complex
  $F_{\psi}$ is a resolution.  The $\mathcal{O}_{X}$-modules associated to the
  $S$-modules $M_\psi \otimes_{S[L]} S$ and $S \mathbin{\!/\!}  I_L$ coincide
  if, for all $\sigma \in \Sigma_{X}$, there exists a nonnegative integer $k$
  such that
  \[
    (\bm{x}^{\widehat{\sigma}})^{k} \cdot M_{\psi} \subseteq M_{L} \, .
  \]
\end{proposition}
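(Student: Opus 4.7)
The plan is to realize both $M_{\psi} \otimes_{S[L]} S$ and $S \mathbin{\!/\!} I_{L}$ as $S$-submodules of the group algebra $\kk[\ZZ^n \mathbin{\!/\!} L]$, and then to use the hypothesis on the monomials $\bm{x}^{\widehat{\sigma}}$ to show that the quotient of the larger by the smaller has trivial associated sheaf on $X$.

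First I would chase the inclusion $M_L \subseteq M_\psi$ through the functor $- \otimes_{S[L]} S$.  The assumption $\psi(\bm{0}) = \bm{0}$, combined with $L$-equivariance of $\psi$, yields $\psi(\bm{v}) = \bm{v}$ for every $\bm{v} \in L$; hence $\bm{x}^{\bm{v}} \in M_{\psi}$ for every $\bm{v} \in L$, and therefore $M_L \subseteq M_\psi$ as $S[L]$-submodules of $R$.  A direct calculation paralleling that in the proof of \Cref{p:gens} identifies $M_L \otimes_{S[L]} S$ with the semigroup ring $\kk[A] \cong S \mathbin{\!/\!} I_L$ sitting inside $R \otimes_{S[L]} S \cong \kk[\ZZ^n \mathbin{\!/\!} L]$.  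Invoking the exactness of $- \otimes_{S[L]} S$ already cited in the proof of \Cref{p:gens} promotes the chain $M_L \subseteq M_\psi \subseteq R$ to a chain of $S$-submodules
\[
  S \mathbin{\!/\!} I_L
  \cong M_L \otimes_{S[L]} S
  \subseteq M_\psi \otimes_{S[L]} S
  \subseteq \kk[\ZZ^n \mathbin{\!/\!} L] \, .
\]

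Next I would transport the hypothesis $(\bm{x}^{\widehat{\sigma}})^k M_\psi \subseteq M_L$ through the same functor, concluding that $(\bm{x}^{\widehat{\sigma}})^k (M_\psi \otimes_{S[L]} S) \subseteq S \mathbin{\!/\!} I_L$ inside $\kk[\ZZ^n \mathbin{\!/\!} L]$. The quotient $Q \colequal (M_\psi \otimes_{S[L]} S) \mathbin{\!/\!} (S \mathbin{\!/\!} I_L)$ is therefore annihilated by $(\bm{x}^{\widehat{\sigma}})^k$ for every $\sigma \in \Sigma_X$.

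To finish, I would invoke the standard toric sheafification dictionary; see \cite{CLS11}*{\S5.3}.  On each affine chart $U_\sigma \subseteq X$, the $\mathcal{O}_X$-module associated to any $\ZZ^n \mathbin{\!/\!} L$-graded $S$-module $N$ is determined by the degree-zero part of $N_{\bm{x}^{\widehat{\sigma}}}$. Because $(\bm{x}^{\widehat{\sigma}})^k$ annihilates $Q$, the localization $Q_{\bm{x}^{\widehat{\sigma}}}$ vanishes for every $\sigma \in \Sigma_X$, so the sheaf associated to $Q$ restricts to zero on every $U_\sigma$ and is globally zero.  Hence the inclusion $S \mathbin{\!/\!} I_L \subseteq M_\psi \otimes_{S[L]} S$ induces an isomorphism of associated $\mathcal{O}_X$-modules.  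The principal subtlety is ensuring that the inclusion $M_L \subseteq M_\psi$ and the annihilator hypothesis both transport cleanly through $- \otimes_{S[L]} S$; once the exactness of this functor and the identification $M_L \otimes_{S[L]} S \cong S \mathbin{\!/\!} I_L$ are in hand, the remainder is a routine application of the toric sheaf dictionary.
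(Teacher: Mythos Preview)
Your proposal is correct and follows essentially the same route as the paper's proof: both establish $M_L \subseteq M_\psi$ from $\psi(\bm{0}) = \bm{0}$, invoke the identification $M_L \otimes_{S[L]} S \cong S \mathbin{\!/\!} I_L$ (the paper cites \cite{BS98}*{Lemma~3.1} for this), use the hypothesis $(\bm{x}^{\widehat{\sigma}})^k M_\psi \subseteq M_L$ to see that the two modules agree after localizing at $\bm{x}^{\widehat{\sigma}}$, and finish via \cite{CLS11}*{Proposition~5.3.3}. The only cosmetic difference is the order of operations: the paper localizes $M_L$ and $M_\psi$ first (obtaining $(M_L)_{\bm{x}^{\widehat{\sigma}}} = (M_\psi)_{\bm{x}^{\widehat{\sigma}}}$) and then tensors with $S$, whereas you tensor first and then argue that the quotient $Q$ localizes to zero; the two arguments are interchangeable.
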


\begin{proof}
  Fix a cone $\sigma \in \Sigma_{X}$. As $\psi(\bm{0}) = \bm{0}$, there is an
  inclusion $M_{L} \subseteq M_{\psi}$ of $S[L]$\nobreakdash-modules.  By
  hypothesis, there is a nonnegative integer $k$ such that
  $\smash{(\bm{x}^{\widehat{\sigma}})^{k}} \cdot M_{\psi} \subseteq M_{L}$.
  Combining these inclusions gives
  $(M_{L})_{\bm{x}^{\widehat{\sigma}}} =
  (M_{\psi})_{\bm{x}^{\widehat{\sigma}}}$.  Since \cite{BS98}*{Lemma~3.1}
  establishes that $M_{L} \otimes_{S[L]} S = S \mathbin{\!/\!}  I_L$, we
  obtain
  \begin{align*}
    \bigl( (S \mathbin{\!/\!}  I_L)_{\bm{x}^{\widehat{\sigma}}} \bigr)_{\bm{0}}
    = \bigl( (M_{L} \otimes_{S[L]} S)_{\bm{x}^{\widehat{\sigma}}} \bigr)_{\bm{0}}
    &= \bigl( (M_{L})_{\bm{x}^{\widehat{\sigma}}} \otimes_{S[L]}
    (S)_{\bm{x}^{\widehat{\sigma}}} \bigr)_{\bm{0}} \\
    &= \bigl( (M_{\psi})_{\bm{x}^{\widehat{\sigma}}} \otimes_{S[L]}
    (S)_{\bm{x}^{\widehat{\sigma}}} \bigr)_{\bm{0}}
    = \bigl( (M_{\psi} \otimes_{S[L]} S)_{\bm{x}^{\widehat{\sigma}}}
    \bigr)_{\bm{0}} \, .
  \end{align*}
  Applying \cite{CLS11}*{Proposition~5.3.3}, the sheaves associated
  to the $S$-modules $M_\psi \otimes_{S[L]} S$ and $S \mathbin{\!/\!}  I_L$
  are identical on an open affine cover of $X$. Therefore, these
  $\mathcal{O}_{X}$-modules coincide.
\end{proof}

\begin{remark}
  \label{r:irr}
  One can strengthen \Cref{p:sheaf} by reducing the set of cones that need to
  be considered. To see this, recall from \Cref{r:toric} that the toric
  morphism $\varphi \colon Y \to X$ corresponds to an injective $\ZZ$-linear
  map $\overline{\varphi} \colon N_{Y} \to N_{X}$ between the lattices of
  one-parameter subgroups of $Y$ and $X$. By \Cref{l:supp}, the support of the
  sheaf associated to $M_\psi \otimes_{S[L]} S$ is the subvariety of $X$ cut
  out by the toric ideal $I_{L}$. Thus, it suffices to check that these
  sheaves agree on the affine opens $U_{\sigma}$ covering $\varphi(Y)$.
  More precisely, one must verify that there exists a nonnegative integer $k$
  such that
  \[
    (\bm{x}^{\widehat{\sigma}})^{k} \cdot M_{\psi} \subseteq M_{L}
  \]
  for all cones $\sigma$ satisfying $\dim \bigl( \overline{\varphi}(N_Y) \cap
  \sigma \bigr) = \dim N_{Y}$.
\end{remark}

\subsection*{Revisiting the Anderson resolution of the diagonal}

We place the resolution of the diagonal in \cite{And24}*{Theorem~1.1} within
our framework. Again, let $Y$ be a smooth toric variety with no torus factors,
and let $m$ be the number of rays in its fan. The diagonal embedding
$\Delta \colon Y \to X \colequal Y \times Y$ gives rise to the lattice
\[
  L = \Lambda(M_{Y})
  \colequal \smash{\set{(\bm{v}, - \bm{v}) \in \ZZ^{m} \times \ZZ^{m}
      \mathrel{|} \bm{v} \in M_Y \subset \ZZ^m}}
  \subset \ZZ^{n} \, ;
\]
see also \Cref{e:diag}. The toric ideal $I_{L}$ cuts out the image of the
diagonal embedding. Inspired by \cite{And24}*{\S3}, we christen the following
compatible $\ZZ^{n}$-stratification; compare with \Cref{e:lcm}. We continue to
assume that the space $\RL \cong \RR^{m}$ carries the standard cell structure.

\begin{definition}
  \label{d:And}
  For any Lawrence lattice $L \subset \ZZ^n$, the \emph{Anderson
    stratification} is the map $\psi \colon \RL \to \ZZ^{n}$ defined by
  $\psi(\bm{q}, -\bm{q}) = (\floor{\bm{q}}, - \floor{\bm{q}})$ for each vertex
  $(\bm{q}, -\bm{q}) \in \RL \subset \RR^n$.  For any higher-dimensional open
  cell $\sigma$, the value $\psi(\sigma)$ is the componentwise maximum of the
  lattice vectors assigned to the vertices in the closure of $\sigma$.
\end{definition}

In general, the Anderson stratification $\psi \colon \RL \to \ZZ^{n}$ does not
satisfy the topological condition in \Cref{p:res}, and the cellular free
$S$-complex $F_\psi \otimes_{S[L]} S$ may fail to be a resolution. As the next
example illustrates, even when this $S$-complex is a resolution, it need not
resolve the $S$-module $M_\psi \otimes_{S[L]} S$. Nevertheless,
\cite{And24}*{Theorem~1.1} shows that, for the Anderson stratification, the
$\mathcal{O}_{X}$-complex associated to $F_\psi\otimes_{S[L]} S$ is in fact a
resolution of the $\mathcal{O}_{X}$-module $\Delta_{*}^{} \mathcal{O}_{Y}$.

\begin{example}[Cellular free resolution arising from an Anderson
  stratification]
  \label{e:And}
  As in \Cref{e:diag}, consider the toric surface
  $Y \colequal \smash{\PP \bigl(\mathcal{O}_{\PP^1} \oplus
    \mathcal{O}_{\PP^1}(2) \!\bigr)}$ with its diagonal embedding
  $\Delta \colon Y \to X \colequal Y \times Y$, and write
  $S \colequal \kk[x_1, x_2, x_3, x_4, y_1, y_2, y_3, y_4]$ for the Cox ring of
  $X$. Let $\psi \colon \RL \to \ZZ^{8}$ denote the Anderson
  stratification. Figure~\ref{f:And24} displays a fundamental domain in which
  each vertex $\bm{q} \in \RL$ is labeled with the monomial having exponent
  vector $\psi(\bm{q})$.  The corresponding $\Pic(X)$-graded cellular free
  $S$-complex $F_\psi \otimes_{S[L]} S$ is
  \[
    \begin{matrix}
      S(0, 0, \; 0, 0) \\[-3pt]
      \oplus \\[-3pt]
      S(-1, 1, \; 1, -1) \\
    \end{matrix}
    \xleftarrow{\quad \partial_1 \quad}
    \begin{matrix}
      \relphantom{-} S(0, 0, \; 1, -1)^2 \\[-3pt]  
      \oplus \\[-3pt]
      S(-1, 0, \; 0, -1)^2\\[-3pt] 
      \oplus \\[-3pt]
      S(-1, 0, \; -1, 0) \\  
    \end{matrix}
    \xleftarrow{\quad \partial_2 \quad} \!\!\!\!\!\!
    \begin{matrix}
      S(-1, 0, \; 1, -1) \\[-3pt]
      \oplus \\[-3pt]
      S(0, -1, \; 0, -1) \\[-3pt]
      \oplus \\[-3pt]
      \relphantom{-} S(-1, 0, \; -1, -1) \\
    \end{matrix} \!\!\!
    \xleftarrow{\qquad} 0
    \, ,
  \]
  where%
  \vspace{-1em}
  \begin{align*}
    \partial_1
    &=
      \begin{bmatrix*}
         y_2 y_3 &  y_1 y_2 & -x_3 y_4 & -x_1 y_4 & y_1 x_3 - x_1 y_3 \\
        -x_2 x_3 & -x_1 x_2 &  y_3 x_4 &  y_1 x_4 & 0 \\
      \end{bmatrix*}
    && \text{and}
    & \partial_2
    &= \begin{bmatrix*}
      -x_1 &  y_1 x_4 &  0   \\
       x_3 & -y_3 x_4 &  0   \\
       0   & -x_1 x_2 &  y_1 \\
       0   &  x_2 x_3 & -y_3 \\
      -y_2 &  0       &  y_4 \\
    \end{bmatrix*}
    \, .
  \end{align*}

  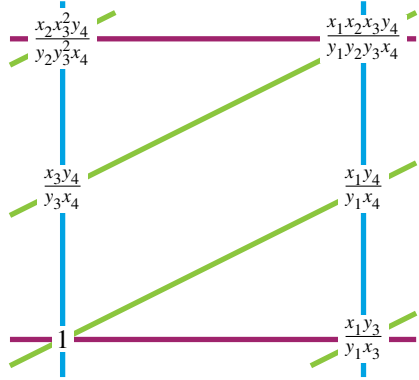
\begin{figure}[ht]
    \centering
    \vspace{-0.75em}
    \begin{tikzpicture}[scale=1.0,line width=2pt]
\usetikzlibrary{calc, decorations.pathreplacing,shapes.misc, arrows.meta}
\usetikzlibrary{decorations.pathmorphing}

\coordinate  (origin) at (0,0);
\coordinate (br) at (4,0);
\coordinate (tr) at (4,4);
\coordinate (tl) at (0,4);
\coordinate (ml) at (0,2);
\coordinate (mr) at (4,2);

\coordinate (hoff) at (0.7,0);
\draw[RedViolet] ($(origin)-(hoff)$) -- ($(br)+(hoff)$);
\draw[RedViolet] ($(tl)-(hoff)$)--($(tr)+(hoff)$);

\coordinate (voff) at (0,0.5);
\draw[Cerulean] ($(origin)-(voff)$) -- ($(tl)+(voff)$);
\draw[Cerulean] ($(br)-(voff)$) -- ($(tr)+(voff)$);

\coordinate (diagoff) at (0.7,0.35);
\draw[LimeGreen] ($(br)-(diagoff)$) -- ($(br)+(diagoff)$);
\draw[LimeGreen] ($(origin)-(diagoff)$) -- ($(mr)+(diagoff)$);
\draw[LimeGreen] ($(ml)-(diagoff)$) -- ($(tr)+(diagoff)$);
\draw[LimeGreen] ($(tl)-(diagoff)$) -- ($(tl)+(diagoff)$);

\node[inner sep = 1.5pt, fill=white] at (origin) {\footnotesize $1$};
\node[inner sep = 1.5pt, fill=white] (ml-label) at (ml)
{\footnotesize $\frac{x_3^{} y_4^{}}{y_3^{} x_4^{}}$};
\node[inner sep = 1.5pt, fill=white] at (tl)
{\footnotesize $\frac{x_2^{} x_3^{2} y_4^{}}{y_2^{} y_3^2 x_4^{}}$};
\node[inner sep = 1.5pt, fill=white] at (br)
{\footnotesize $\frac{x_1^{} y_3^{}}{y_1^{} x_3^{}}$};
\node[inner sep = 1.5pt, fill=white] (mr-label) at (mr)
{\footnotesize $\frac{x_1^{} y_4^{}}{y_1^{} x_4^{}}$};
\node[inner sep = 1.5pt, fill=white] at (tr)
{\footnotesize $\frac{x_1^{} x_2^{} x_3^{} y_4^{}}{y_1^{} y_2^{} y_3^{} x_4^{}}$};

\end{tikzpicture}
    \vspace{-0.75em}
    \caption{A fundamental domain of the Anderson stratification
      for the diagonal embedding of the second Hirzebruch surface.}
    \label{f:And24}
  \end{figure}

  Lifting up to the universal cover, the cellular free $S$-complex $F_\psi$ is
  a resolution, but it does not resolve the monomial module $M_\psi$.  Observe
  that the vertices
  $\bm{p}_{1} \colequal (0, 0.5, 1, -0.5, \; 0, -0.5, -1, 0.5)$ and
  $\bm{p}_{2} \colequal (1, 0.5, 0, -0.5, \; -1, -0.5, 0, -0.5)$ in $\RL$
  satisfy $\psi(\bm{p}_{1}) = (0, 0, 1, -1, \; 0, 0, -1, 1)$ and
  $\psi(\bm{p}_{2}) = (1, 0, 0, -1, \; -1, 0, 0, 1)$. Since
  $x_1^{} x_3^{} x_{4}^{-1} y_{4}^{} = \operatorname{lcm}(x_3^{} y_3^{-1}
  x_{4}^{-1} y_{4}^{}, x_1^{} y_1^{-1} x_{4}^{-1} y_{4}^{})$, the failure to
  resolve $M_{\psi}$ in terms of \Cref{p:res} is witnessed by the closed
  subset $(\RL)_{\leqslant (1,0,1,-1, \; 0,0,0,1)}$, which consists precisely
  of these two vertices and is thereby not contractible.

  Alternatively, one certifies that the $S$-modules
  $H_{0}(F_\psi \otimes_{S[L]} S)$ and $M_\psi \otimes_{S[L]} S$ are different
  by showing that their annihilators are not equal.  As in the proof of
  \Cref{l:supp}, the annihilator of $M_\psi \otimes_{S[L]} S$ is the toric
  ideal $I_{L}$. One verifies (for instance in \emph{Macaulay2}~\cite{M2})
  that the other annihilator is $I_{L} \cap \ideal{x_2, x_4}$. In particular,
  the homology module is supported on an ideal other than the toric ideal, but
  differs only on the irrelevant locus. \qedhere
\end{example}

By introducing a small perturbation, \cite{And24}*{\S3} constructs a larger
family of $S$-complexes. From our perspective, we recover this construction by
making a small translation to the space $\RL$ in the ambient space $\RR^n$.
The next example, which notably lies outside the realm of ceiling
stratifications, illustrates this idea by adding a small parameter to
\Cref{e:And}.

\begin{example}[Cellular free resolution arising from a small perturbation]
  \label{e:And-ep}
  As in \Cref{e:diag} and \Cref{e:And}, consider the diagonal embedding of the
  second Hirzebruch surface.  Fix $0 < \varepsilon \ll 1$, and let
  $(\RL)_{\varepsilon} \colequal \RL + (0, 0, \varepsilon, 0) \subset \RR^n$
  have the standard cell structure, as shown in Figure~\ref{f:def}.  For the
  Anderson stratification $\psi \colon (\RL)_{\varepsilon} \to \ZZ^n$, the
  (nonminimal) $\Pic(X)$-graded cellular free $S$-complex
  $F_\psi \otimes_{S[L]} S$ is
  \[
    \begin{matrix}
      \; S(0,0, \; 0, 0)^2 \\[-3pt] 
      \oplus \\[-3pt]
      S(-1, 1, \; 1, -1) \\[-3pt] 
      \oplus \\[-3pt]
      S(-2, 1, \; 2, -1) \\ 
    \end{matrix}
    \xleftarrow{\qquad \partial_1 \qquad}
    \begin{matrix}
      \!\! S(0, 0, \; 0, 0) \\[-3pt]  
      \oplus \\[-3pt]
      \!\! S(-1, 0, \; -1, 0) \\[-3pt] 
      \oplus \\[-3pt]
      S(-1, 0, \; 0, -1)^2 \\[-3pt] 
      \oplus \\[-3pt]
      S(-2, 1, \; 1, -1)^2 \\[-3pt] 
      \oplus \\[-3pt]
      S(0, 0, \; 2, -1)^2 \\  
    \end{matrix}
    \xleftarrow{\qquad \partial_2 \qquad} \!\!\!\!\!\!
    \begin{matrix}
      \;\;\; S(-1, 0, \; -1, -1) \\[-3pt] 
      \oplus \\[-3pt]
      S(-2, 0, \; 0, -1) \\[-3pt] 
      \oplus \\[-3pt]
      S(-1, 0, \; 1, -1) \\[-3pt] 
      \oplus \\[-3pt]
      \;\; S(0, 0, \; 2, -1) \\ 
    \end{matrix}
    \!\!\!\! \xleftarrow{\qquad} 0,
  \]
  where%
  \vspace{-1.0em}
  \begin{align*}
    \partial_1
    &= \setlength{\arraycolsep}{2pt}
      \begin{bmatrix*}
         -1 & y_1 x_3 &  0      & -x_3 y_4 & 0   & 0   &  0 & y_2 \\
          1 & -x_1 y_3 & -x_1 y_4 & 0       & 0   & 0   &  y_2   & 0 \\
          0 &  0      &  y_1 x_4 & y_3 x_4 & -x_1 & -x_3 &  0  & 0 \\
          0 &  0      &  0       & 0      &  y_1 & y_3  & -x_2 & -x_2 \\
      \end{bmatrix*}
    && \!\!\!\! \text{and} \!\!\!\!
    & \partial_2
    &= \setlength{\arraycolsep}{2pt}
      \begin{bmatrix*}
        0   &  x_1 x_3 y_4 &  0       & -y_2 \\
        y_4 &  0           & -y_2     &  0 \\
        -y_3 &  x_3         &  0       &  0 \\
        y_1 & -x_1        &  0       &  0 \\
        0   & -y_3 x_4     &  x_2 x_3 &  0 \\
        0   &  y_1 x_4     & -x_1 x_2 &  0 \\
        0   &  0          & -x_1 y_3  &  1 \\
        0   &  0          &  y_1 x_3  &  -1 \\
      \end{bmatrix*}
    \, .
    \vspace{-1.0em}
  \end{align*}
  Combining \Cref{p:res} and \Cref{c:res} establishes that $F_\psi
  \otimes_{S[L]} S$ is a $\Pic(X)$-graded resolution of the $S$-module
  $M_{\psi} \otimes_{S[L]} S$.

  \begin{figure}[t]
    \centering

\begin{tikzpicture}[scale=1.25, line width=2pt]
\usetikzlibrary{calc, decorations.pathreplacing,shapes.misc, arrows.meta}
\usetikzlibrary{decorations.pathmorphing}

\coordinate (epsilony) at (0,-0.75);
\coordinate (epsilonx) at (1.5,0);
\coordinate  (origin) at (0,0);
\coordinate (origin2) at ($(origin)+(epsilonx)$);

\coordinate (br) at (4,0);
\coordinate (tr) at (4,4);
\coordinate (tr1) at ($(tr)+(epsilony)$);
\coordinate (tl) at (0,4);
\coordinate (tl1) at ($(tl)+(epsilony)$);
\coordinate (tl2) at ($(tl)+(epsilonx)$);
\coordinate (ml) at ($(0,2)+(epsilony)$);
\coordinate (mr) at ($(4,2)+(epsilony)$);

\coordinate (hoff) at (0.7,0);
\draw[RedViolet] ($(origin)-(hoff)$) -- ($(br)+(hoff)$);
\draw[RedViolet] ($(tl)-(hoff)$)--($(tr)+(hoff)$);

\coordinate (voff) at (0,0.5);
\draw[Cerulean] ($(origin)-(voff)$) -- ($(tl)+(voff)$);
\draw[Cerulean] ($(br)-(voff)$) -- ($(tr)+(voff)$);

\coordinate (diagoff) at (0.7,0.35);
\draw[LimeGreen] ($(origin2)-(diagoff)$) -- ($(mr)+(diagoff)$);
\draw[LimeGreen] ($(ml)-(diagoff)$) -- ($(tr1)+(diagoff)$);
\draw[LimeGreen] ($(tl1)-(diagoff)$) -- ($(tl2)+(diagoff)$);

\node[inner sep = 1.5pt, fill=white] at (origin)
{\footnotesize $1$};
\node[inner sep = 1.5pt, fill=white] (ml-label) at (ml)
{\footnotesize $\frac{x_3 y_4}{y_3 x_4}$};
\node[inner sep = 1.5pt, fill=white] at (tl)
{\footnotesize $\frac{x_2^{} x_3^2 y_4^{}}{y_2^{} y_3^2 x_4^{}}$};
\node[inner sep = 1.5pt, fill=white] at (br)
{\footnotesize $\frac{x_1y_3}{y_1 x_3}$};
\node[inner sep = 1.5pt, fill=white] (mr-label) at (mr)
{\footnotesize $\frac{x_1 y_4}{y_1 x_4}$};
\node[inner sep = 1.5pt, fill=white] at (tr)
{\footnotesize $\frac{x_1 x_2 x_3 y_4}{y_1 y_2 y_3 x_4}$};
\node[inner sep = 1.5pt, fill=white] at (origin2)
{\footnotesize $1$};
\node[inner sep = 1.5pt, fill=white] at (tl1)
{\footnotesize $\frac{x_3^2 y_4^{}}{y_3^2 x_4^{}}$};
\node[inner sep = 1.5pt, fill=white] at (tl2)
{\footnotesize $\frac{x_2^{} x_3^2 y_4^{}}{y_2^{} y_3^2 x_4^{}}$};
\node[inner sep = 1.5pt, fill=white] (tr1-label) at (tr1)
{\footnotesize $\frac{x_1 x_3 y_4}{y_1 y_3 x_4}$};


\end{tikzpicture} 
    \vspace{-0.75em}
    \caption{A fundamental domain of an $\varepsilon$-shifted stratification
      for the diagonal embedding of the second Hirzebruch surface.}
    \label{f:def}
  \end{figure}
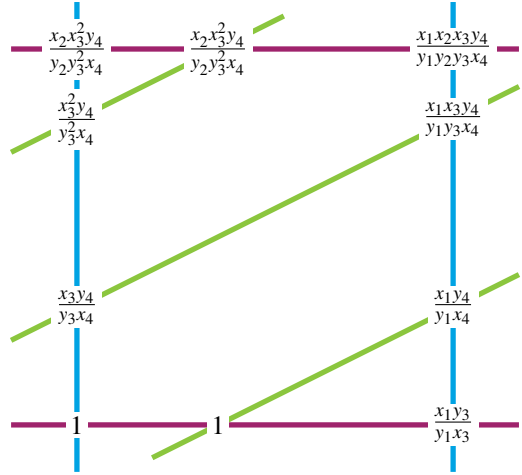

  We also claim that the $\mathcal{O}_{X}$-complex associated to
  $F_{\psi} \otimes_{S[L]} S$ is a resolution of the diagonal.  The monomial
  $S$-module $M_{\psi} \otimes_{S[L]}S$ is generated by the Laurent monomials
  $1$, $\smash{x_{3}^{} y_{3}^{-1} x_{4}^{-1} y_{4}^{}}$ and
  $\smash{x_{3}^{2} y_{3}^{-2} x_{4}^{-1} y_{4}^{}}$.  Applying \Cref{p:sheaf}
  and \Cref{r:irr}, we check that
  $\smash{(\bm{x}^{\widehat{\sigma}})^{k} \!\cdot\! M_{\psi} \subseteq M_{L}}$
  for the relevant cones $\sigma \in \Sigma_{X}$ as follows:
  \begin{align*}
    x_1^{} y_1^{} x_2^{} y_2^{} \!\cdot\!
      \tfrac{x_3^{} y_4^{}}{y_3^{} x_4^{}}
    &= y_1^2 y_2^2 \!\cdot\!
      \tfrac{x_1^{} x_2^{} x_3^{} y_4^{}}{y_1^{} y_2^{} y_3^{} x_4^{}}, \!
    & x_2^{} y_2^{} x_3^{} y_3^{} \!\cdot\!
      \tfrac{x_3^{} y_4^{}}{y_3^{} x_4^{}}
    &= y_2^2 y_3^2 \!\cdot\!
      \tfrac{x_2^{} x_3^2 y_4^{}}{y_2^{} y_3^{2} x_4^{}}, \!
    & x_1^{} y_1^{} x_4^{} y_4^{} \!\cdot\!
      \tfrac{x_3^{} y_4^{}}{y_3^{} x_4^{}}
    &= x_1^2 y_4^2 \!\cdot\!
      \tfrac{y_1^{} x_3^{}}{x_1^{} y_3^{}}, \! \\[-2pt]
    x_1^{} y_1^{} x_2^{} y_2^{} \!\cdot\!
      \tfrac{x_3^2 y_4^{}}{y_3^{2} x_4^{}}
    &= x_1^{} y_1^{} y_2^{2} \!\cdot\!
      \tfrac{x_2^{} x_3^{2} y_4^{}}{y_2^{} y_3^{2} x_4^{}}, \!
    & x_2^{} y_2^{} x_3^{} y_3^{} \!\cdot\!
      \tfrac{x_3^2 y_4^{}}{y_3^{2} x_4^{}}
    &= y_2^{2} x_3^{} y_3^{} \!\cdot\!
      \tfrac{x_2^{} x_3^2 y_4^{}}{y_2^{} y_3^{2} x_4^{}}, \!
    & (x_1^{} y_1^{} x_4^{} y_4^{})^2 \!\cdot\!
      \tfrac{x_3^2 y_4^{}}{y_3^{2} x_4^{}}
    &= x_1^{4} x_4^{} y_4^{3} \!\cdot\!
      \left( \! \tfrac{y_1^{} x_3^{}}{x_1^{} y_3^{}} \! \right)^{\!\!2} . \!
  \end{align*}
  Hence, the $\mathcal{O}_{X}$-module associated to $M_{\psi} \otimes_{S[L]}S$
  is $\Delta_{*}^{} \mathcal{O}_{Y}$.
\end{example}

Although these small perturbations often produce $S$-complexes that come
closer to satisfying \Cref{p:res}, it is not clear whether there is always a
choice of $\varepsilon$ that yields a free resolution.

\subsection*{Cellular interpretation of the Favero--Huang homotopy path
  algebra construction}

We also discuss how the resolution of the diagonal in \cite{FH25}*{\S5} fits
into our framework.  To sidestep the technical details surrounding
\cite{FH25}*{Corollary~6.7}, which describes a projective cellular resolution
of the diagonal bimodule over a homotopy path algebra, we concentrate on the
toric variety $Y = \PP^{2}$.

Let $\Delta \colon \PP^{2} \to \PP^2 \times \PP^2$ denote the diagonal
embedding, and let $S \colequal \kk[x_1, x_2, x_3, y_1, y_2, y_3]$ be the Cox
ring of $\PP^2 \times \PP^2$.  The corresponding lattice $L \subseteq \RR^{6}$
is generated by the integral vectors $(1, -1, 0, -1, 1, 0)$ and
$(0, -1, 1, 0, 1, -1)$.  These vectors form a basis of $L$, which determines
an isomorphism $\iota \colon \ZZ^{2} \to L \subset \ZZ^{6}$.  Extending the
map $\iota$ linearly gives the map $\iota_{\RR} \colon \RR^{2} \to \RR^{6}$
whose image is exactly $\RL$.

To describe the desired cell structure on the space $\RL$, consider the closed
subset
\[
  \mathcal{S}
  = \overline{ \set{\bm{p} \in \RR^3 \mathrel{\big|}
      \text{$\ceil{p_1} + \ceil{p_2} + \ceil{p_3} = 0$
        and at least one coordinate $p_i \in \ZZ$}}}
\]
in $\RR^3$ (with its Euclidean topology). \cite{MS05}*{\S3} calls $\mathcal{S}$
a ``staircase surface.'' We give $\mathcal{S}$ the polyhedral cell structure
determined by its intersection with the periodic arrangement arising from the
standard basis in $\RR^{3}$; compare with \Cref{d:std}. The maximal cells in
$\mathcal{S}$ are unit squares.  The linear projection of $\mathcal{S}$ onto
linear subspace $\RL \subset \RR^{3}$ along the vector $(1,1,1)$ endows the
space $\RL$ with a polyhedral cell structure.

The relevant compatible $\ZZ^{6}$-stratification depends on an auxiliary
function $\gamma \colon \RR^{2} \to \mathcal{S}$.  This map is defined in two
steps.  First, include $\RR^{2}$ into $\RR^{3}$ via the linear map determined
by the matrix
\[
  \begin{bmatrix*}
    1 & 0 \\
    -1 & -1 \\
    0 & 1 \\
  \end{bmatrix*} \, .
\]
Next, project the point in $\RR^{3}$ onto $\mathcal{S}$ along the direction
$(1,1,1)$.  This composite map $\gamma$ is a homeomorphism. Moreover, the
image of the standard integral lattice $\ZZ^{2} \subset \RR^{2}$ under
$\gamma$ is
$\set{ \bm{u} \in \mathcal{S} \mathrel{\big|} u_1 + u_2 + u_3 = 0} \cong
\ZZ^{2}$. For any lattice point $\bm{u} \in \ZZ^{2}$, observe that
$\iota(\bm{u}) = \bigl( \gamma(\bm{u}), - \gamma(\bm{u}) \kern-1.0pt \bigr)$.

\begin{lemma}
  \label{l:FH}
  The map $\psi \colon \RL \to \ZZ^{6}$, defined by
  $\psi(\iota_\RR(\bm{q})) = \ceil{ \bigl( \gamma(\bm{q}), - \gamma(\bm{q})
    \bigr)}$ for all $\bm{q} \in \RR^{2} \cong \RL$, is a compatible
  $\ZZ^6$-stratification.
\end{lemma}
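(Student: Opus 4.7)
The plan is to verify the three requirements of \Cref{d:strat}: that $\psi$ is continuous into the Alexandrov topology on $\ZZ^{6}$, constant on each open cell of $\RL$, and $L$-equivariant.

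The key algebraic fact I need is that $\gamma(\bm{q} + \bm{u}) = \gamma(\bm{q}) + \gamma(\bm{u})$ for all $\bm{q} \in \RR^{2}$ and $\bm{u} \in \ZZ^{2}$. Writing $A \colon \RR^{2} \hookrightarrow \RR^{3}$ for the linear inclusion used to define $\gamma$, the map $\gamma$ sends $\bm{q}$ to $A \bm{q} + t(\bm{q}) \, (1,1,1)$, where $t(\bm{q})$ is the unique real parameter placing this sum on $\mathcal{S}$. Since $A \bm{u} = \gamma(\bm{u})$ has integer coordinates summing to zero, it lies in $\ZZ^{3} \cap \mathcal{S}$, and translation by $A \bm{u}$ carries $\mathcal{S}$ to itself; hence $t(\bm{q} + \bm{u}) = t(\bm{q})$. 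Combining this identity with the linearity of $\iota_{\RR}$, the formula $\iota(\bm{u}) = (\gamma(\bm{u}), -\gamma(\bm{u}))$ already recorded in the excerpt, and the translation-compatibility $\ceil{r+k} = \ceil{r} + k$ of the ceiling, a direct computation will yield $\psi(\iota_{\RR}(\bm{q}) + \iota(\bm{u})) = \psi(\iota_{\RR}(\bm{q})) + \iota(\bm{u})$, establishing $L$-equivariance.

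For constancy on open cells, I will exploit that the cell structure on $\RL$ is defined precisely so that $\gamma$ is a cellular homeomorphism onto $\mathcal{S}$; hence it suffices to show that the map $\bm{p} \mapsto (\ceil{\bm{p}}, \ceil{-\bm{p}})$ is constant on each open cell of $\mathcal{S}$. By construction every such cell lies in the relative interior of a single face of the standard cubical complex on $\RR^{3}$, and both $\ceil{\bm{p}}$ and $\ceil{-\bm{p}}$ are constant on such faces. For continuity, the discussion following \Cref{d:strat} reduces the question to closedness of $(\RL)_{\leqslant \bm{u}}$ for each $\bm{u} \in \ZZ^{6}$; transporting via the homeomorphism $\gamma$, this set becomes the preimage of the intersection of $\mathcal{S}$ with finitely many closed half-spaces in $\RR^{3}$, and is therefore closed.

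The main obstacle will be the equivariance step, because $\gamma$ is not a group homomorphism: I will need to verify carefully that integer translations in $\ZZ^{2}$ lift to translations of $\mathcal{S}$ by integer points of $\ZZ^{3}$ that preserve the fiber parameter $t(\bm{q})$. Once that is in hand, the cellular constancy and Alexandrov continuity follow routinely from how the cell structure on $\RL$ has been defined.
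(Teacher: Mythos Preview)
Your proposal is correct and follows essentially the same route as the paper's proof: both establish the identity $\gamma(\bm{q}+\bm{u}) = \gamma(\bm{q}) + \gamma(\bm{u})$ for $\bm{u} \in \ZZ^{2}$ to handle equivariance, and both verify continuity by identifying $(\RL)_{\leqslant \bm{u}}$ with the $\gamma$-preimage of the intersection of $\mathcal{S}$ with a closed box in $\RR^{3}$. You supply more detail than the paper does---your explanation via the fiber parameter $t(\bm{q})$ for the translation identity, and your argument for cellwise constancy, fill in steps the paper dismisses with ``by construction''---but the architecture is the same.
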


\Cref{f:stair} displays part of the cell complex $\RL$ in which each vertex
$\bm{q} \in \RL$ is labeled with the monomial having exponent vector
$\psi(\bm{q})$.  This agrees with the cell structure in
\cite{FH25}*{Example~3.15}.

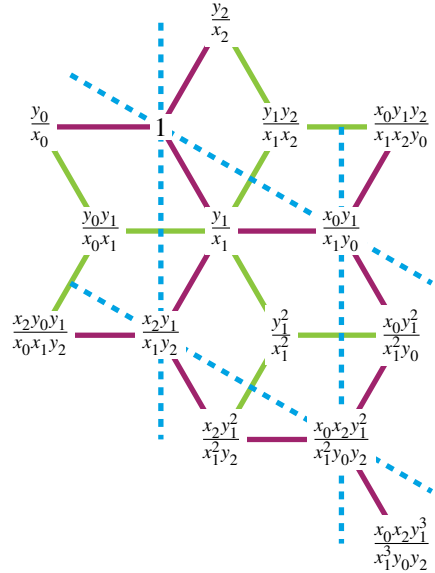
\begin{figure}[ht]
  \centering
  \vspace{-0.75em}

\begin{tikzpicture}[x={(1cm,0cm)},y={(-0.5cm,0.866cm)},z={(-0.5cm,-0.866cm)},
  scale=1.6, line width=2pt]
  \usetikzlibrary{calc, decorations.pathreplacing,shapes.misc, arrows.meta}
  \usetikzlibrary{decorations.pathmorphing}
  
  \coordinate (origin) at (0,0,0);
  \coordinate (a) at (1,-1,0);
  \coordinate (b) at (0,-1,1);
  \coordinate (ab) at ($(a)+(b)$);
  
  \draw[RedViolet] (origin) -- (-1,0,0);
  \draw[RedViolet] (origin) -- (0,-1,0);
  \draw[RedViolet] (origin) -- (0,0,-1);
  
  \draw[RedViolet] (a) -- ++(-1,0,0);
  \draw[RedViolet] (a) -- ++(0,-1,0);
  \draw[RedViolet] (a) -- ++(0,0,-1);
  
  \draw[RedViolet] (b) -- ++(-1,0,0);
  \draw[RedViolet] (b) -- ++(0,-1,0);
  \draw[RedViolet] (b) -- ++(0,0,-1);

  \draw[RedViolet] (ab) -- ++(-1,0,0);
  \draw[RedViolet] (ab) -- ++(0,-1,0);
  \draw[RedViolet] (ab) -- ++(0,0,-1);

  \draw[LimeGreen] (0,-1,-1) -- ++(1,0,0);
  \draw[LimeGreen] (0,-1,-1) -- ++(0,1,0);
  \draw[LimeGreen] (0,-1,-1) -- ++(0,0,1);

  \draw[LimeGreen] (-1,-1,0) -- ++(1,0,0);
  \draw[LimeGreen] (-1,-1,0) -- ++(0,1,0);
  \draw[LimeGreen] (-1,-1,0) -- ++(0,0,1);

  \draw[LimeGreen] (0,-2,0) -- ++(1,0,0);
  \draw[LimeGreen] (0,-2,0) -- ++(0,1,0);
  \draw[LimeGreen] (0,-2,0) -- ++(0,0,1);

  \draw[dashed,Cerulean] ($(origin)-0.5*(a)$) -- ($1.5*(a)$);
  \draw[dashed,Cerulean] ($(origin)-0.5*(b)$) -- ($1.5*(b)$);
  \draw[dashed,Cerulean] ($(a)-0.5*(b)$) -- ($(a)+1.5*(b)$);
  \draw[dashed,Cerulean] ($(b)-0.5*(a)$) -- ($1.5*(a)+(b)$);

  \node[inner sep = 1.5pt, fill=white] at (origin)
  {\footnotesize $1$};
  \node[inner sep = 1.5pt, fill=white] at (a)
  {\footnotesize $\frac{x_0^{} y_1^{}}{x_1^{} y_0^{}}$};
  \node[inner sep = 1.5pt, fill=white] at (b)
  {\footnotesize $\frac{x_2^{}y_1^{}}{x_1^{} y_2^{}}$};
  \node[inner sep = 1.5pt, fill=white] at ($(a)+(b)$)
  {\footnotesize $\frac{x_0^{} x_2^{} y_1^2}{x_1^2 y_0^{} y_2^{}}$};
  
  \node[inner sep = 1.5pt, fill=white] at (-1,0,0)
  {\footnotesize $\frac{y_0^{}}{x_0^{}}$};
  \node[inner sep = 1.5pt, fill=white] at (0,-1,0)
  {\footnotesize $\frac{y_1^{}}{x_1^{}}$};
  \node[inner sep = 1.5pt, fill=white] at (0,0,-1)
  {\footnotesize $\frac{y_2^{}}{x_2^{}}$};

  \node[inner sep = 1.5pt, fill=white] at ($(a)+(0,-1,0)$)
  {\footnotesize $\frac{x_0^{} y_1^2}{x_1^2 y_0^{}}$};
  \node[inner sep = 1.5pt, fill=white] at ($(a)+(0,0,-1)$)
  {\footnotesize $\frac{x_0^{} y_1^{} y_2^{}}{x_1^{} x_2^{} y_0^{}}$};
  
  \node[inner sep = 1.5pt, fill=white] at ($(b)+(-1,0,0)$)
  {\footnotesize $\frac{x_2^{} y_0^{} y_1^{}}{x_0^{} x_1^{} y_2^{}}$};
  \node[inner sep = 1.5pt, fill=white] at ($(b)+(0,-1,0)$)
  {\footnotesize $\frac{x_2^{} y_1^2}{x_1^2 y_2^{}}$};
  
  \node[inner sep = 1.0pt, fill=white] at ($(ab)+(0,-1,0)$)
  {\footnotesize $\frac{x_0^{} x_2^{} y_1^3}{x_1^3 y_0^{} y_2^{}}$};

  \node[inner sep = 1.5pt, fill=white] at (0,-1,-1)
  {\footnotesize $\frac{y_1^{} y_2^{}}{x_1^{} x_2^{}}$};
  \node[inner sep = 1.5pt, fill=white] at (0,-2,0)
  {\footnotesize  $\frac{y_1^2}{x_1^2}$};
  \node[inner sep = 1.5pt, fill=white] at (-1,-1,0)
  {\footnotesize $\frac{y_0^{} y_1^{}}{x_0^{} x_1^{}}$};
\end{tikzpicture}
  \vspace{-0.75em}
  \caption{A portion of the labeled cell complex giving a Favero--Huang
    resolution of the diagonal for $\PP^2$.  The dashed lines illustrate the
    underlying lattice.}
  \label{f:stair}
\end{figure}

\begin{proof}[Proof of \Cref{l:FH}]
  By construction, the map $\psi$ is constant on the open cells in $\RL$.
  Moreover, for any point $\bm{q} \in \RR^{2}$ and any lattice point
  $\bm{u} \in \ZZ^{2}$, we have
  $\gamma(\bm{q} + \bm{u}) = \gamma(\bm{q}) + \gamma(\bm{u})$.  It follows
  that
  \begin{align*}
    \psi \bigl( \iota_{\RR}(\bm{q} + \bm{u}) \kern-1.0pt \bigr)
    = \ceil{\bigl( \gamma(\bm{q} + \bm{u}),
      - \gamma(\bm{q} + \bm{u}) \kern-1.0pt \bigr)}
    &= \ceil{\bigl( \gamma(\bm{q}), -\gamma(\bm{q}) \kern-1.0pt \bigr)}
      + \ceil{\bigl( \gamma(\bm{u}), -\gamma(\bm{u}) \kern-1.0pt \bigr)} \\
    &= \psi \bigl( \iota_{\RR}(\bm{q}) \kern-1.0pt \bigr)
      + \bigl( \gamma(\bm{u}), -\gamma(\bm{u}) \kern-1.0pt \bigr)
    = \psi \bigl( \iota_{\RR}(\bm{q}) \kern-1.0pt \bigr)
      + \iota(\bm{u}) \, ,
  \end{align*}
  which shows that $\psi$ is compatible with lattice translation.

  To demonstrate that the map $\psi$ is continuous, we must show that, for a
  fixed lattice point $\bm{u} \in \ZZ^{6}$, the subset
  $(\RL)_{\leqslant \bm{u}} = \set{\bm{p} \in \RL \mathrel{\big|} \psi(\bm{p})
    \leqslant \bm{u}}$ is closed in $\RL$.  Lifting the defining inequality to
  the ambient space $\RR^{3}$, we see that $(\RL)_{\leqslant \bm{u}}$ is the
  projection from $\RR^{3}$ of the intersection of the set $\mathcal{S}$ with
  the closed rectangular cuboid
  $[-u_4, u_1] \times [-u_5, u_2] \times [-u_6, u_3]$.
\end{proof}

Unlike the ceiling stratification, the subsets $(\RL)_{\leqslant \bm{u}}$ are
not always convex.  For example, the subset corresponding to the lattice point
$\bm{u} \colequal (0, 0, -1, 0, 0, -1)$ is a union of two quadrilaterals
meeting at a single vertex. However, the cellular free $S$-complex $F_\psi
\otimes_{S[L]} S$ is
still a resolution.

\begin{proposition}
  \label{p:FH}
  When $\psi \colon \RL \to \ZZ^{6}$ is the compatible
  $\ZZ^{6}$-stratification defined in \Cref{l:FH}, the cellular free
  $S$-complex $F_\psi \otimes_{S[L]} S$ is a resolution of the monomial module
  $M_{\psi} \otimes_{S[L]} S$.
\end{proposition}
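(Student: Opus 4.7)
By \Cref{c:res}, it suffices to prove that the cellular free $S[L]$-complex $F_\psi$ is a $\ZZ^6$-graded resolution of the monomial $S[L]$-module $M_\psi$. \Cref{p:res} reduces this in turn to a topological condition: for every lattice point $\bm{u} \in \ZZ^6$, the closed subset $(\RL)_{\leqslant \bm{u}}$ is acyclic or empty. Unwinding \Cref{l:FH}, the inequality $\psi \bigl( \iota_\RR(\bm{q}) \bigr) \leqslant \bm{u}$ is equivalent to the conjunction of $\gamma(\bm{q})_i \leqslant u_i$ and $\gamma(\bm{q})_i \geqslant -u_{i+3}$ for $i = 1, 2, 3$; that is, $\gamma(\bm{q}) \in C_{\bm{u}} \colequal [-u_4, u_1] \times [-u_5, u_2] \times [-u_6, u_3]$. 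Because $\gamma \colon \RR^2 \to \mathcal{S}$ is a homeomorphism, this identifies $(\RL)_{\leqslant \bm{u}}$ with the subset $\mathcal{S} \cap C_{\bm{u}}$ of the staircase surface, so the task becomes showing that $\mathcal{S} \cap C_{\bm{u}}$ is contractible or empty for every $\bm{u}$.

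The key structural fact is that the staircase surface $\mathcal{S}$ is the topological boundary in $\RR^3$ of the lower region $\set{ \bm{p} \in \RR^3 \mathrel{\big|} \ceil{p_1} + \ceil{p_2} + \ceil{p_3} \leqslant 0}$, and the linear projection of $\RR^3$ onto the plane $\set{x_1 + x_2 + x_3 = 0} \cong \RL$ along $(1,1,1)$ restricts to a homeomorphism $\varpi \colon \mathcal{S} \to \RL$. Writing $\lambda \colequal (\varpi|_{\mathcal{S}})^{-1} \colon \RL \to \mathcal{S}$, the image $\varpi(\mathcal{S} \cap C_{\bm{u}})$ is the subcomplex of $\RL$ cut out by the six inequalities $-u_{i+3} \leqslant \lambda_i(\bm{q}) \leqslant u_i$ for $i = 1, 2, 3$. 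Since $\lambda$ is continuous and piecewise linear with respect to the cell structure of \Cref{l:FH}, this is a finite 2-dimensional CW subcomplex of the plane $\RL \cong \RR^2$, so contractibility reduces to verifying connectedness and simple connectedness.

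The main obstacle lies in handling degenerate cases where $\mathcal{S} \cap C_{\bm{u}}$ is not a topological disk, such as the wedge of two quadrilaterals at a shared vertex noted in the remark preceding \Cref{p:FH}. My plan for connectedness is a coordinate-descent argument: any two points of $\varpi(\mathcal{S} \cap C_{\bm{u}})$ are joined by a path that moves monotonically along the cell structure of $\RL$ toward a common corner of the box, staying always in the staircase and in $C_{\bm{u}}$; the box constraints are preserved because coordinate-descent paths in $\RL$ lift under $\lambda$ to paths whose coordinates change monotonically within the relevant intervals. For simple connectedness, the plan is to argue that the complement of $\varpi(\mathcal{S} \cap C_{\bm{u}})$ in $\RL$ is connected: any point outside the intersection violates one of the six linear inequalities, and the six violating regions are mutually path-connected in $\RL$ through routes at infinity that escape the axis-aligned box $C_{\bm{u}}$ in a consistent direction. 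Combining connectedness and the absence of planar holes yields contractibility of $(\RL)_{\leqslant \bm{u}}$, and \Cref{c:res} then gives the conclusion.
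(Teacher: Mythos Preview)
Your reduction via \Cref{c:res} and \Cref{p:res} to the claim that $\mathcal{S} \cap C_{\bm{u}}$ is acyclic (or empty) for every axis-aligned box $C_{\bm{u}}$ is exactly the paper's argument; the paper then simply \emph{asserts} that this intersection is contractible and stops. What you have added is an attempt to justify that assertion, and that is where the gaps lie.

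Your connectedness plan is not yet a proof. The phrase ``toward a common corner of the box'' is ambiguous (the corners of $C_{\bm{u}}$ lie in $\RR^3$, not on $\mathcal{S}$, and their $\varpi$-images need not lie in $\varpi(\mathcal{S}\cap C_{\bm{u}})$), and the monotonicity claim that ``coordinate-descent paths in $\RL$ lift under $\lambda$ to paths whose coordinates change monotonically'' is false in general: writing $\lambda(\bm{q}) = \bm{q} + t(\bm{q})(1,1,1)$ for the height function $t$, a path that decreases one planar coordinate forces another to increase (since $q_1+q_2+q_3=0$), and $t$ itself is not monotone along such paths, so individual $\lambda_i$ can oscillate. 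The complement-connectedness argument for simple connectedness is likewise only heuristic, since the sets $\{\lambda_i > u_i\}$ are not half-planes.

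A cleaner route, which makes the paper's assertion transparent, is to avoid the planar picture altogether. Let $D \colequal \set{\bm{p} \in \RR^3 \mathrel{|} \sum_i \ceil{p_i} \leqslant 0}$, so that $\mathcal{S} = \partial D = D \cap \overline{D^c}$. Since $D$ is a down-set and $C_{\bm{u}}$ is a box, $D \cap C_{\bm{u}}$ is star-shaped with centre the minimum corner $(-u_4,-u_5,-u_6)$ (which lies in $D$ whenever $\mathcal{S}\cap C_{\bm{u}}\neq\varnothing$); dually $\overline{D^c}$ is an up-set and $\overline{D^c}\cap C_{\bm{u}}$ is star-shaped from the maximum corner. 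Both pieces are therefore contractible, their union is $C_{\bm{u}}$, and their intersection is $\mathcal{S}\cap C_{\bm{u}}$. Mayer--Vietoris then gives $\tilde{H}_*(\mathcal{S}\cap C_{\bm{u}}) = 0$ directly, which is all that \Cref{p:res} requires.
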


\begin{proof}
  The intersection of $\mathcal{S}$ with a closed rectangular cuboid, whose
  faces are parallel with the coordinate hyperplanes, is contractible. We
  deduce that the subset $(\RL)_{\leqslant \bm{u}}$ is contractible for all
  $\bm{u} \in \RL$. Thus, combining \Cref{p:res} and \Cref{c:res} shows that
  $F_\psi \otimes_{S[L]} S$ is a resolution.
\end{proof}

For the compatible $\ZZ^{6}$-stratification defined in \Cref{l:FH}, the
monomial module $\smash{M_{\psi} \otimes_{S[L]} S}$ is generated by the
Laurent monomials $1$, $\smash{x_1^{} y_{1}^{-1}}$, and
$\smash{x_{1}^{} y_{1}^{-1} x_{2}^{} y_{2}^{-1}}$. The (minimal)
$\ZZ^{2}$-graded cellular free $S$-complex $F_\psi \otimes_{S[L]} S$ is
\[
  \begin{matrix}
    S(0,0) \\[-3pt]
    \oplus \\[-3pt]
    S(1,-1) \\[-3pt]
    \oplus \\[-3pt]
    S(2,-2) \\
  \end{matrix}
  \xleftarrow{\quad
    \begin{bsmallmatrix}
      0 & 0 & 0 & -y_1 & -y_2 & -y_3 \\
      -y_1 & -y_2 & -y_3 & x_1 & x_2 & x_3 \\
      x_1 & x_2 & x_3 & 0 & 0 & 0 \\
    \end{bsmallmatrix} \quad}
  \begin{matrix}
    S(1,-2)^3 \\[-3pt]
    \oplus \\[-3pt]
    S(0,-1)^3 \\
  \end{matrix}
  \xleftarrow{\quad
    \begin{bsmallmatrix}
      0 & -x_3 & -x_2 \\
      -x_3 & 0 & x_1 \\
      x_2 & x_1 & 0\\
      0 & y_3 & y_2 \\
      y_3 & 0 & -y_1 \\
      -y_2 & -y_1 & 0 \\
    \end{bsmallmatrix} \quad}
  \begin{matrix}
    S(0,-2)^3  \\
  \end{matrix}
  \xleftarrow{\quad}
  \begin{matrix}
    0 \\
  \end{matrix}
  \, .
\]
Generalizing this construction recovers the resolutions of the diagonal for
$\PP^{n}$ implicitly described in \cite{FH25}*{Corollary~6.7}. It is unclear
whether a similar approach can be used to construct resolutions of the
diagonal for other toric varieties.

\subsection*{Acknowledgements}

This project began at the American Institute of Mathematics (AIM) in 2023 and
continued at the Simon Laufer Mathematical Institute (SLMath) in
2024. Computational experiments in \emph{Macaulay2}~\cite{M2} were
essential. We are grateful to Reginald Anderson, Isidora Bailly-Hall, Michael
Brown, Daniel Erman, David Favero, Andrew Hanlon, and Mahrud Sayrafi for
helpful discussions. CB was partially supported by NSF Grants DMS-2001101 and
DMS-2412039, LCH was partially supported by NSF Grants MSPRF DMS-2503497, DMS-2401482, and DMS-1901848 and GGS was partially supported by NSERC.

\raggedright
\begin{bibdiv}
  \begin{biblist}

    \bib{And24}{article}{
      author={Anderson, Reginald},
      title={A resolution of the diagonal for smooth toric varieties},
      date={2024},
      pages={21pp. available at \href{https://arxiv.org/abs/2403.09653}%
        {\texttt{arXiv:2403.09653}}}
    }

    \bib{B+24}{article}{
      label={B+24},
      author={Ballard, Matthew R.},
      author={Berkesch, Christine},
      author={Brown, Michael K.},
      author={Cranton Heller, Lauren},
      author={Erman, Daniel},
      author={Favero, David},
      author={Ganatra, Sheel},
      author={Hanlon, Andrew},
      author={Huang, Jesse},
      title={King's Conjecture and Birational Geometry},
      date={2024},
      pages={54pp. available at \href{https://arxiv.org/abs/2501.00130}%
        {\texttt{arXiv:2501.00130}}},
    }

    \bib{Ber25}{article}{
      author={Berkesch, Christine},
      title={Cellular resolutions of normal toric embeddings},
      conference={
        title={Toric geometry},
        address={Oberwolfach},
        date={2025},
      },
      book={
        series={\href{http://dx.doi.org/10.4171/OWR/2025/19}%
          {Oberwolfach Rep.}},
        volume={19},
        number={2},
        publisher={Springer, Berlin},
      },
      date={2025},
      pages={895--898},
    }

    \bib{BES20}{article}{
      author={Berkesch, Christine},
      author={Erman, Daniel},
      author={Smith, Gregory G.},
      title={\href{https://doi.org/10.14231/ag-2020-013}%
        {Virtual resolutions for a product of projective spaces}},
      journal={Algebr. Geom.},
      volume={7},
      date={2020},
      number={4},
      pages={460--481},
    }

    \bib{BPS98}{article}{
      author={Bayer, Dave},
      author={Peeva, Irena},
      author={Sturmfels, Bernd},
      title={\href{https://doi.org/10.4310/MRL.1998.v5.n1.a3}%
        {Monomial resolutions}},
      journal={Math. Res. Lett.},
      volume={5},
      date={1998},
      number={1-2},
      pages={31--46},
    }

    \bib{BPS01}{article}{
      author={Bayer, Dave},
      author={Popescu, Sorin},
      author={Sturmfels, Bernd},
      title={\href{https://doi.org/10.1515/crll.2001.040}%
        {Syzygies of unimodular Lawrence ideals}},
      journal={J. Reine Angew. Math.},
      volume={534},
      date={2001},
      pages={169--186},
    }

    \bib{BS98}{article}{
      author={Bayer, Dave},
      author={Sturmfels, Bernd},
      title={\href{http://dx.doi.org/10.1515/crll.1998.083}%
        {Cellular resolutions of monomial modules}},
      journal={J. Reine Angew. Math.},
      volume={502},
      date={1998},
      pages={123--140},
    }

    \bib{Bon06}{article}{
      author={Bondal, Alexey},
      title={Derived categories of toric varieties},
      conference={
        title={Convex and algebraic geometry},
        address={Oberwolfach},
        date={2006},
      },
      book={
        series={\href{http://dx.doi.org/10.4171/OWR/2006/05}%
          {Oberwolfach Rep.}},
        volume={3},
        number={1},
        publisher={Springer, Berlin},
      },
      date={2006},
      pages={284--286},
    }

    \bib{BH25}{article}{
      author={Borisov, Lev},
      author={Han, Zengrui},
      title={Hanlon-Hicks-Lazarev resolution revisited},
      date={2025},
      pages={11pp. available at \href{https://arxiv.org/abs/2509.11077}%
        {\texttt{arXiv:2509.11077}}},
    }

    \bib{BE24}{article}{
      author={Brown, Michael K.},
      author={Erman, Daniel},
      title={\href{http://dx.doi.org/10.1017/fms.2024.40}%
        {A short proof of the Hanlon--Hicks--Lazarev theorem}},
      journal={Forum Math. Sigma},
      volume={12},
      date={2024},
      pages={Paper No. e56, 6pp},
    }

    \bib{CLS11}{book}{
      author={Cox, David A.},
      author={Little, John B.},
      author={Schenck, Henry K.},
      title={\href{http://dx.doi.org/10.1090/gsm/124}%
        {Toric varieties}},
      series={Graduate Studies in Mathematics~124},
      publisher={Amer. Math. Soc., Providence, RI},
      date={2011},
      pages={xxiv+841},
    }

    \bib{CG10}{book}{
      author={Chriss, Neil},
      author={Ginzburg, Victor},
      title={\href{http://dx.doi.org/10.1007/978-0-8176-4938-8}%
        {Representation theory and complex geometry}},
      series={Modern Birkh\"auser Classics},
      publisher={Birkh\"auser Boston, Ltd., Boston, MA},
      date={2010},
      pages={x+495},
    }

    \bib{DP11}{book}{
      label={DP11},
      author={De Concini, Corrado},
      author={Procesi, Claudio},
      title={\href{https://doi.org/10.1007/978-0-387-78963-7}%
        {Topics in hyperplane arrangements, polytopes and box-splines}},
      series={Universitext},
      publisher={Springer, New York},
      date={2011},
      pages={xx+384},
    }

    \bib{Eis05}{book}{
      author={Eisenbud, David},
      title={\href{https://doi.org/10.1007/b137572}%
        {The geometry of syzygies}},
      series={Graduate Texts in Mathematics},
      volume={229},
      publisher={Springer-Verlag, New York},
      date={2005},
      pages={xvi+243},
    }

    \bib{FH25}{article}{
      author={Favero, David},
      author={Huang, Jesse},
      title={\href{http://dx.doi.org/10.1007/s00029-025-01021-0}%
        {Homotopy path algebras}},
      journal={Selecta Math. (N.S.)},
      volume={31},
      date={2025},
      number={2},
      pages={Paper No. 25, 53pp},
    }


    \bib{Hat02}{book}{
      author={Hatcher, Allen},
      title={\href{https://pi.math.cornell.edu/~hatcher/AT/ATpage.html}%
        {Algebraic topology}},
      publisher={Cambridge University Press, Cambridge},
      date={2002},
      pages={xii+544},
    }

    \bib{HHL24}{article}{
      author={Hanlon, Andrew},
      author={Hicks, Jeff},
      author={Lazarev, Oleg},
      title={\href{http://dx.doi.org/10.1017/fmp.2024.21}%
        {Resolutions of toric subvarieties by line bundles and applications}},
      journal={Forum Math. Pi},
      volume={12},
      date={2024},
      pages={Paper No. e24, 58pp},
    }

    \bib{Hoc72}{article}{
      author={Hochster, Melvin},
      title={\href{http://dx.doi.org/10.2307/1970791}%
        {Rings of invariants of tori, Cohen--Macaulay rings generated by
        monomials, and polytopes}},
      journal={Ann. of Math. (2)},
      volume={96},
      date={1972},
      pages={318--337},
    }

    \bib{Lur17}{article}{
      author={Lurie,Jacob},
      title={Higher Algebra},
      date={2017},
      pages={1553pp. available at
        \href{https://www.math.ias.edu/~lurie/papers/HA.pdf}%
        {\texttt{https://www.math.ias.edu/\textasciitilde lurie/}}} }

   \bib{M2}{misc}{
      label={M2},
      author={The \emph{Macaulay2} project authors},
      date={{(1993--2025)}},
      title={Macaulay2, a software system for research in algebraic geometry},
      note={available at \url{https://macaulay2.com}},
    }

    \bib{MS05}{book}{
      author={Miller, Ezra},
      author={Sturmfels, Bernd},
      title={\href{http://dx.doi.org/10.1007/b138602}%
        {Combinatorial commutative algebra}},
      series={Graduate Texts in Mathematics},
      volume={227},
      publisher={Springer-Verlag, New York},
      date={2005},
      pages={xiv+417},
    }

    \bib{OW07}{collection}{
      author={Orlik, Peter},
      author={Welker, Volkmar},
      title={\href{https://doi.org/10.1007/978-3-540-68376-6}%
        {Algebraic combinatorics}},
      series={Universitext},
      publisher={Springer, Berlin},
      date={2007},
      pages={viii+177},
    }

    \bib{Pee11}{book}{
      author={Peeva, Irena},
      title={\href{http://dx.doi.org/10.1007/978-0-85729-177-6}%
        {Graded syzygies}},
      series={Algebra and Applications},
      volume={14},
      publisher={Springer-Verlag, New York},
      date={2011},
      pages={xii+302},
    }

    \bib{PS98}{article}{
      author={Peeva, Irena},
      author={Sturmfels, Bernd},
      title={\href{http://dx.doi.org/10.1090/S0894-0347-98-00255-0}%
        {Generic lattice ideals}},
      journal={J. Amer. Math. Soc.},
      volume={11},
      date={1998},
      number={2},
      pages={363--373},
    }

    \bib{Wey03}{book}{
      author={Weyman, Jerzy},
      title={\href{http://dx.doi.org/10.1017/CBO9780511546556}%
        {Cohomology of vector bundles and syzygies}},
      series={Cambridge Tracts in Mathematics},
      volume={149},
      publisher={Cambridge University Press, Cambridge},
      date={2003},
      pages={xiv+371},
    }

  \end{biblist}
\end{bibdiv}

\raggedright

\end{document}